\newtheorem{theorem}{Theorem}
\newtheorem{proposition}{Proposition}
\newtheorem{lemma}{Lemma}
\newtheorem{corollary}{Corollary}
\newtheorem{definition}{Definition}
\newtheorem{assumption}{Assumption}
\newtheorem{remark}{Remark}
\title{Accelerated Stochastic ExtraGradient: Mixing Hessian and Gradient Similarity to Reduce Communication in Distributed and Federated Learning}
\author{
  \bf{Dmitry Bylinkin}
 \institute{MIPT}
  \email{dbylinkin69@gmail.com} 
  \and 
 \bf{ Kirill Degtyarev}
   \institute{MIPT}
  \email{degtiarev.kd@phystech.su}
  \and
\bf{ Aleksandr Beznosikov}
 \institute{MIPT, ISP RAS,
Innopolis University}
  \email{beznosikov.an@phystech.su}
     }
\newcommand*{\E}{\mathbb{E}}
\newcommand{\A}{\bar{A}_{\theta}^k}
\newcommand{\nA}{\nabla\bar{A}_{\theta}^k}
\newcommand{\R}{\mathbb{R}^d}
\newcommand{\x}{x_f^{k+1}}
\begin{document}
\maketitle

\begin{abstract}
    Modern realities and trends in learning require more and more generalization ability of models, which leads to an increase in both models and training sample size. It is already difficult to solve such tasks in a single device mode. This is the reason why distributed and federated learning approaches are becoming more popular every day.
    Distributed computing involves communication between devices, which requires solving two key problems: efficiency and privacy. One of the most well-known approaches to combat communication costs is to exploit the similarity of local data. Both Hessian similarity and homogeneous gradients have been studied in the literature, but separately. In this paper, we combine both of these assumptions in analyzing a new method that incorporates the ideas of using data similarity and clients sampling. Moreover, to address privacy concerns, we apply the technique of additional noise and analyze its impact on the convergence of the proposed method. The theory is confirmed by training on real datasets.
\end{abstract}

\section{Introduction}
Empirical risk minimization is a key concept in supervised machine learning \citep{intro_erm}. This paradigm is used to train a wide range of models, such as linear and logistic regression \citep{intro_regression}, support vector machines \citep{intro_svm}, tree-based models \citep{intro_trees}, neural networks \citep{intro_nn}, and many more. In essence, this is an optimization of the parameters based on the average losses in data. By minimizing the empirical risk, the model aims to generalize well to unseen samples and make accurate predictions. In order to enhance the overall predictive performance and effectively tackle advanced tasks, it is imperative to train on large datasets. By leveraging such ones during the training process, we aim to maximize the generalization capabilities of models and equip them with the robustness and adaptability required to excel in challenging real-world scenarios \citep{intro_robust}. The richness and variety of samples in large datasets provides the depth needed to learn complex patterns, relationships, and nuances, enabling them to make accurate and reliable predictions across a wide range of inputs and conditions. As datasets grow in size and complexity, traditional optimization algorithms may struggle to converge in a reasonable amount of time. This increases the interest in developing distributed methods \citep{intro_data_distrib}. Formally, the objective function in the minimization problem is a finite sum over nodes, each one containing its own possibly private dataset. In the most general form, we have:
\begin{equation}\label{prob_form}
    \min_{x\in\R}\left[r(x) = \frac{1}{M}\sum_{m=1}^Mr_m(x) = \frac{1}{M}\sum_{m=1}^M\left( \frac{1}{n_m} \sum_{j=1}^{n_m} \ell(x, y_j^m)\right)\right],
\end{equation}
where $M$ refers to number of nodes/clients/machines/devices, $n_m$ is the size of the $m$-th local dataset, $x$ is the vector representation of a model using $d$ features, $y_j^m$ is the $j$-th data point on the $m$-th node and $\ell$ is the loss function. We consider the most computationally powerful device that can communicate with all others to be a server. Without loss of generality, we assume that $r_1$ is responsible for the server, and the rest $r_m$ for other nodes.

This formulation of the problem entails a number of challenges. 
In particular, transferring information between devices incurs communication overhead \citep{intro_communication}. This can result in increased resource utilization, which affects the overall efficiency of the training process. 
There are different ways to deal with communication costs \citep{mcmahan2017communication,koloskova2020unified,alistarh2017qsgd,beznosikov2023biased}. One of the key approaches is the exploiting the similarity of local data and hence the corresponding losses. One way to formally express it is to use Hessian similarity \citep{intro_similarity}:
\begin{equation}
 \| \nabla^2 r_m (x) - \nabla^2 r(x)\| \leq \delta,
\end{equation}
where $\delta>0$ measures the degree of similarity between the corresponding Hessian matrices. The goal is to build methods with communication complexity depends on $\delta$ because it is known that $\delta\thicksim\nicefrac{1}{\sqrt{n}}$ for non-quadratic losses and $\delta\thicksim\nicefrac{1}{n}$ for quadratic ones \citep{mirror_2}. Here $n$ is the size of the training set. Hence, the more data on nodes, the more statistically "similar" the losses are and the less communication is needed. The basic method in this case is \texttt{Mirror Descent} with the appropriate Bregman divergence \citep{mirror_1,mirror_2}:
\begin{equation*}
x_{k+1} = \arg\min_{x \in \R}\left( r_1 (x) + \langle \nabla r(x_k) - \nabla r_1 (x_k), x \rangle + \frac{\delta}{2}\|x - x_k \|^2 \right).
\end{equation*}
The computation of $\nabla r$ requires communication with clients. Our goal is to reduce it. The key idea is that by offloading nodes, we move the most computationally demanding work to the server. Each node performs one computation per iteration (to collect $\nabla r$ on the server), while server solves the local subproblem and accesses the local oracle $\nabla r_1$ much more frequently. While this approach provides improvements in communication complexity over simple distributed version of \texttt{Gradient Descent}, there is room for improvement in various aspects.  In particular, the problem of constructing an optimal algorithm has been open for a long time \citep{Sim_ohad,Yos_sim,Sim_yan,Sim_sonata}. This has recently been solved by the introduction of \texttt{Accelerated ExtraGradient} \citep{https://doi.org/10.48550/arxiv.2205.15136}.

Despite the fact that there is an optimal algorithm for communication, there are still open challenges: since each dataset is stored on its own node, it is expensive to compute the full gradient. Moreover, by transmitting it, we lose the ability to keep procedure private \citep{intro_privacy}, since in this case it is possible to recover local data. One of the best known cheap ways to protect privacy is to add noise to communicated information \citep{abadi2016deep}. To summarize the above, in this paper, we are interested in stochastic modifications of \texttt{Accelerated ExtraGradient}  \citep{https://doi.org/10.48550/arxiv.2205.15136}, which would be particularly resistant to the imposition of noise on the transmitted packages.

\section{Related Works}

Since our goal is to investigate stochastic versions of algorithms that exploit data similarity (in particular, to speed up the computation of local gradients and to protect privacy), we give a brief literature review of related areas.

\subsection{Distributed Methods under Similarity}
In $2014$ the scalable $\texttt{DANE}$ algorithm was introduced \citep{Sim_ohad}. It was a novel Newton-type method that used $\mathcal{O}\left( 
\left(\nicefrac{\delta}{\mu}\right)^2\log\nicefrac{1}{\varepsilon} \right)$ communication rounds for $\mu$-strongly convex quadratic losses by taking a step appropriate to the geometry of the problem. This work can be considered as the first that started to apply the Hessian similarity in the distributed setting.  A year later $\Omega\left( \sqrt{\nicefrac{\delta}{\mu}}\log\nicefrac{1}{\varepsilon} \right)$ was found to be the lower bound for communication rounds under the Hessian similarity \citep{Yos_sim}. Since then, extensive work has been done to achieve the outlined optimal complexity. The idea of $\texttt{DANE}$ was developed by adding momentum acceleration to the step, thereby reducing the complexity for quadratic loss to $\mathcal{O}\left( \sqrt{\nicefrac{\delta}{\mu}}\log\nicefrac{1}{\varepsilon} \right)$ \citep{Sim_yan}. Later, \texttt{Accelerated SONATA} achieved $\mathcal{O}\left( \sqrt{\nicefrac{\delta}{\mu}}\log^2\nicefrac{1}{\varepsilon} \right)$ for non-quadratic losses by the \texttt{Catalyst} acceleration \citep{intro_similarity}. Finally, there is \texttt{Accelerated ExtraGradient}, which achieves $\mathcal{O}\left( \sqrt{\nicefrac{\delta}{\mu}}\log\nicefrac{1}{\varepsilon} \right)$ communication complexity using ideas of the Nesterov's acceleration, sliding and ExtraGradient \citep{https://doi.org/10.48550/arxiv.2205.15136}.

Other ways to introduce the definition of similarity are also found in the literature. In \citep{khaled2022tighter}, the authors study the local technique in the distributed and federated optimization and  $\sigma_{sim}^2=\frac{1}{M}\sum_{m=1}^M\|\nabla r_m(x_*)\|^2$ is taken as measure of gradient similarity. Here $x_*$ is the solution to the problem (\ref{prob_form}). This kind of data heterogeneity is also examine in \citep{woodworth2022minibatch}. The case of gradient similarity of the form $\| \nabla r_m (x) - \nabla r(x) \| \leq \delta$ is discussed in \citep{gorbunov2020local}. Moreover, several papers use gradient similarity in the analysis of distributed saddle point problems \citep{spp_identical,beznosikov2021distributed,beznosikov2022compression,beznosikov2023decentralized}.

\subsection{Privacy Preserving}
A brief overview of the issue of privacy in modern optimization is given in \citep{Priv_rew}. In this paper, we are interested in differential privacy. In essence, differential privacy guarantees that results of a data analysis process remain largely unaffected by the addition or removal of any single individual's data. This is achieved by introducing randomness or noise into the data in a controlled manner, such that the statistical properties of the data remain intact while protecting the privacy of individual samples \citep{priv_easy}. Differential privacy involves randomized perturbations to the information being sent between nodes and server \citep{Priv_dif}. In this case, it is important to design an algorithm that is resistant to transformations.

\subsection{Stochastic Optimization}
The simplest stochastic methods, e.g. \texttt{SGD}, face a common problem: the approximation of the gradient does not tend to zero when searching for the optimum \citep{Stoch_rew}. The solution to this issue appeared in 2014 with invention of variance reduction technique. There are two main approaches to its implementation: \texttt{SAGA} \citep{saga} and \texttt{SVRG} \citep{svrg}. The first one maintains a gradient history and the second introduces a reference point at which the full gradient is computed, and updates it infrequently. The methods listed above do not use acceleration. 
In recent years, a number of accelerated variance reduction methods have emerged \citep{nguyen2017sarah,li2021page,kovalev2020don,katyusha}.

The idea of stochastic methods with variance reduction carries over to distributed setup as well. In particular, stochasticity helps to break through lower bounds under the Hessian similarity condition. For example, methods that use variance reduction to implement client sampling are constructed. \texttt{Catalyzed SVRP} enjoys $\mathcal{O}\left(\left(  1+M^{-\nicefrac{1}{4}}\cdot\sqrt{\nicefrac{\delta}{\mu}}\right)\log^2\nicefrac{1}{\varepsilon}\right)$ complexity in terms of the amount of server-node communication \citep{Sim_svrp}. This method combines stochastic proximal point evaluation, client sampling, variance reduction and acceleration via the Catalyst framework \citep{catalyst_framework}. However, it requires strong convexity of each $r_m$. It is possible to move to weaker assumptions on local functions. There is \texttt{AccSVRS} that achieves $\mathcal{O}\left(\left(1+M^{-\nicefrac{1}{4}}\cdot\sqrt{\nicefrac{\delta}{\mu}}\right)\log\nicefrac{1}{\varepsilon}\right)$ communication complexity \citep{svrs}. In addition, variance reduction turns out to be useful to design methods with compression. There is \texttt{Three Pillars Algorithm}, which achieves $\mathcal{O}\left( \left( 
1+ M^{-\nicefrac{1}{2}}\cdot\nicefrac{\delta}{\mu} \right)\log\nicefrac{1}{\varepsilon} \right)$ for variational inequalities \citep{tpa}.

The weakness of the accelerated variance reduction methods is that they require a small inversely proportional to $\sqrt{M}$ step size \citep{tpa,Sim_svrp,svrs}. Thus, one has to “pay” for the convergence to an arbitrary $\varepsilon$-solution by increasing the iteration complexity of the method. However, there is the already mentioned gain in terms of the number of server-node communications. Meanwhile, accelerated versions of the classic \texttt{SGD} do not have this flaw \citep{vaswani2020adaptive}. In some regimes, the accuracy that can be achieved without the use of variance reduction may be sufficient, including for distributed problems under similarity conditions. 

\section{Our Contribution}
$\bullet$ \textbf{New method.} We propose a new method based on the deterministic \texttt{Accelerated ExtraGradient (AEG)} \citep{https://doi.org/10.48550/arxiv.2205.15136}. Unlike \texttt{AEG}, we add the ability to sample computational nodes. This saves a significant amount of runtime. As mentioned above, sampling was already implemented in \texttt{Three Pillars Algorithm} \citep{tpa}, \texttt{Catalyzed SVRP} \citep{Sim_svrp} and \texttt{AccSVRS} \citep{svrs}. However, this was done using \texttt{SVRG}-like approaches. We rely instead on \texttt{SGD}, which does not require decreasing the step size as the number of computational nodes increases.

$\bullet$ \textbf{Privacy via noise.} We consider stochasticity not only in the choice of nodes, but also in the additive noise in the transmission of information from client to server. This makes it much harder to steal private data in the case of an attack on the algorithm.

$\bullet$ \textbf{Theoretical analysis.} We show that the Hessian similarity implies gradient one with an additional term. Therefore, it is possible to estimate the variance associated with sampling nodes more optimistically. Thus, in some cases \texttt{ASEG} converges quite accurately before getting "stuck" near the solution.

$\bullet$ \textbf{Analysis of subproblem}. We consider different approaches to solving a subproblem that occurs on a server. Our analysis allows us to estimate the number of iterations required to ensure convergence of the method.

$\bullet$ \textbf{Experiments.} We validate the constructed theory with numerical experiments on several real datasets. 

\section{Stochastic Version of Accelerated ExtraGradient}

\begin{algorithm}[t]
	\caption{\texttt{Accelerated Stochastic ExtraGradient}
        \label{st_ae:alg}(\texttt{ASEG})}
	\begin{algorithmic}[1]
		\State {\bf Input:} $x^0=x_f^0 \in \mathbb{R}^d$
		\State {\bf Parameters:} $\tau \in (0,1]$, $\eta,\theta,\alpha > 0, N \in \{1,2,\ldots\}, B \in \mathbb{N}$
		\For{$k=0,1,2,\ldots, N-1$}:
            \For{server}:
			\State $x_g^k = \tau x^k + (1-\tau) x_f^k$\label{st_ae:line:1}
                \State Generate set $|I|=B$ numbers uniformly from $[2,...,n]$ \label{generating_first}
                \State Send to each device $m$ one bit $b_k^m$: $1$ if $m\in I$, 0 otherwise
            \EndFor
            \For{each device $m$ in parallel}:
                \If{$b_k^m=1$}
                    \State Send $\nabla r_m(x_g^k, \xi_m^k)$ to server
                \EndIf
            \EndFor
            \For{server}:
                \State $s_k=\frac{1}{B}\sum_{m\in I}(\nabla r_m(x_g^k, \xi_m^k)-\nabla r_1(x_g^k))$\label{sampling_first}
    		\State $x_f^{k+1} \approx \arg\min_{x \in \mathbb{R}^d}\left[ \A(x) \coloneqq \langle s_k,x - x_g^k\rangle + \frac{1}{2\theta}\lVert x - x_g^k \rVert^2 + r_1(x)\right]$\label{st_ae:line:2}
                \State Generate set $|I|=B$ numbers uniformly from $[1,...,n]$\label{generating_second}
                \State Send to each device $m$ one bit $c_k^m$: $1$ if $m\in I$, $0$ otherwise
            \EndFor
            \For{each device $m$ in parallel}:
                \If{$c_k^m=1$}
                    \State Send $\nabla r_m(x_f^{k+1}, \xi_m^{k+\nicefrac{1}{2}})$ to server
                \EndIf
            \EndFor
            \For{server}:
                \State $t_k = \frac{1}{B}\sum_{m\in I}\nabla r_m(x_f^{k+1}, \xi_m^{k+\nicefrac{1}{2}})$\label{sampling_second}
			\State $x^{k+1} = x^k + \eta\alpha (x_f^{k+1}  - x^k)- \eta t_k$\label{st_ae:line:3}
            \EndFor
		\EndFor
		\State {\bf Output:} $x^N$
	\end{algorithmic}
\end{algorithm}

Algorithm \ref{st_ae:alg} (\texttt{ASEG}) is the modification of \texttt{AEG} \citep{https://doi.org/10.48550/arxiv.2205.15136}. In one iteration, server communicates with clients twice. In the first round, random machines are selected (Line \ref{generating_first}). They perform a computation and send the gradient to the server (Line \ref{sampling_first}). Then the server solves the local subproblem at Line \ref{st_ae:line:2} and the solution is used to compute the stochastic (extra)gradient at the second round of communication (Line \ref{generating_second} and Line \ref{sampling_second}). This is followed by a gradient step with momentum at Line \ref{st_ae:line:3}.

Let us briefly summarize the main differences with \texttt{AEG}. Line \ref{st_ae:line:2} of Algorithm \ref{st_ae:alg} uses a gradient over the local nodes' batches, rather than all of them, as is done in the baseline method. Moreover, the case where local gradients are noisy is supported. This is needed for privacy purposes. Similarly in Line \ref{st_ae:line:3}. Note that we consider the most general case where different devices are involved in solving the subproblem and performing the gradient step.

\texttt{ASEG} requires $\Theta(2B)$ of communication rounds (Line \ref{sampling_first} and Line \ref{sampling_second}), where $B$ is the number of nodes involved in the iteration. Unlike our method, \texttt{AEG} does $\Theta(2M-2)$ per iteration. \texttt{AccSVRS} does $\Theta(2M)$ on average. Thus, in experiments, either the difference between the methods is invisible, or \texttt{AEG} loses. \texttt{ASEG} deals with a local gradient $\nabla r_m(x, \xi)$, which can have the structure:
\begin{align*}
    \nabla r_m(x,\xi)=\nabla r_m(x)+\xi,
\end{align*}
where, for example, $\xi\in N(0,\sigma^2)$ or $\xi\in U(-c,c)$.  This is the previously mentioned additive noise overlay technique, popular in the federated learning.

\section{Theoretical Analysis}
Before we delve into analysis, let us introduce some definitions and assumptions to build the convergence theory.

In this paper, complexity is understood in the sense of number of server-node communications. This means that we consider how many times a server exchange vectors with clients. In this case, a single contact is counted as one communication.

We work with functions of a class widely used in the literature. 
\begin{definition}\label{def:convexity}
We say that the function $f(x)\colon \mathbb{R}^d \rightarrow \mathbb{R}$ is $\mu$-strongly convex on $\mathbb{R}^d$, if
    \begin{align*}
        f(x)\geq f(y)+\langle\nabla f(y),x-y\rangle + \frac{\mu}{2}\|x-y\|^2,\quad \forall x,y\in\R.
    \end{align*}
\end{definition}
If $\mu=0$, we call $f$ a convex function.

\begin{definition}\label{def:smoothness}
    We say that the function $f(x)\colon \mathbb{R}^d \rightarrow \mathbb{R}$ is $L$-smooth on $\mathbb{R}^d$, if
    \begin{align*}
        \|\nabla f(x)-\nabla f(y)\|\leq L\|x-y\|,\quad \forall x,y\in\R.
    \end{align*}
\end{definition}

\begin{definition}\label{def:sim}
    We say that $f_m(\cdot)$ is $\delta$-related to $f(\cdot)$, if
    \begin{equation*}
     \| \nabla^2 f_m (x) - \nabla^2 f(x)\| \leq \delta,\quad\forall x\in\R.
    \end{equation*}
\end{definition}
We assume a strongly convex optimization problem (\ref{prob_form}) with possibly non-convex components.
\begin{assumption}\label{ass:1}
    $r$ is $\mu$-strongly convex, $r_1$ is convex and $L_1$-smooth.
\end{assumption}
This assumption does not require convexity of local functions. Indeed, $\delta$-relatedness is a strong enough property to allow them to be non-convex. Within this paper we are interested in the case $\mu<\delta$, since $\mu\ll\delta\ll L$ for large datasets in practice \citep{sun2022distributed}.

\begin{proposition}\label{prop:almost_sim}
    Consider $\delta$-relatedness $r_m(\cdot)$ to $r(\cdot)$ (Definition \ref{def:sim}) for each $m\in[1,M]$. In this case, we have
    \begin{align*}
        \|\nabla r_m(x)-\nabla r(x)\|^2 \leq \sigma_{sim}^2 + 2\delta^2\|x-x_*\|^2, 
    \end{align*}
    where $\sigma_{sim}^2=2\max_{m\in[1,M]}\{\|\nabla r_m(x_*)\|^2\}$ and $x_*$ is the solution of problem (\ref{prob_form}).
\end{proposition}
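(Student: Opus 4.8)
The plan is to control the gradient difference by integrating the Hessian difference along the segment joining $x_*$ to $x$, exploiting that $x_*$ is a stationary point of the aggregate function. First I would introduce the shorthand $G_m(x) \defeq \nabla r_m(x) - \nabla r(x)$, whose Jacobian is exactly $\nabla^2 r_m(x) - \nabla^2 r(x)$, so that Definition \ref{def:sim} gives $\|\nabla^2 r_m(x) - \nabla^2 r(x)\| \leq \delta$ uniformly in $x\in\R$. The decisive observation is that $\nabla r(x_*) = 0$, since $x_*$ minimizes the $\mu$-strongly convex function $r$ from Assumption \ref{ass:1}; hence $G_m(x_*) = \nabla r_m(x_*) - \nabla r(x_*) = \nabla r_m(x_*)$.

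Next I would split $G_m(x) = G_m(x_*) + \left(G_m(x) - G_m(x_*)\right)$ and invoke the fundamental theorem of calculus in the integral form
\begin{align*}
    G_m(x) - G_m(x_*) = \int_0^1 \left(\nabla^2 r_m\left(x_* + t(x - x_*)\right) - \nabla^2 r\left(x_* + t(x-x_*)\right)\right)(x - x_*)\, dt.
\end{align*}
Taking norms and applying the triangle inequality for integrals together with the $\delta$-relatedness bound on the integrand yields $\|G_m(x) - G_m(x_*)\| \leq \delta\|x - x_*\|$. Finally, applying Young's inequality $\|a+b\|^2 \leq 2\|a\|^2 + 2\|b\|^2$ with $a = G_m(x_*)$ and $b = G_m(x) - G_m(x_*)$ gives $\|G_m(x)\|^2 \leq 2\|\nabla r_m(x_*)\|^2 + 2\delta^2\|x - x_*\|^2$, and bounding the first term by its maximum over $m\in[1,M]$ reproduces exactly $\sigma_{sim}^2 + 2\delta^2\|x-x_*\|^2$.

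The argument is short, so the main obstacle is not a deep one: it lies in justifying the integral representation rigorously, which requires that $r_m$ and $r$ be twice continuously differentiable (implicit in Definition \ref{def:sim}), and in correctly passing the spectral-norm bound on the Hessian difference through to its action on the vector $x - x_*$ inside the integral. This is the one place where the differentiability hypotheses must be invoked explicitly, and it is also where one must be careful that the constant is $\delta$ (not $2\delta$ or similar), so that the final $2\delta^2$ factor comes solely from the Young step rather than from the integral estimate.
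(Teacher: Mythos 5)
Your proof is correct and takes essentially the same route as the paper: both reduce the claim to the $\delta$-Lipschitzness of $\nabla(r_m - r)$, use $\nabla r(x_*) = 0$ at the optimum, and conclude with the inequality $\|a+b\|^2 \leq 2\|a\|^2 + 2\|b\|^2$. The only difference is that you justify the Lipschitz step explicitly via the integral representation of $G_m(x)-G_m(x_*)$, a step the paper dismisses as obvious.
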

\begin{proof}
    It is obvious that Definition \ref{def:sim} implies smoothness of $r_m-r$:
    \begin{align*}
        \|\nabla(r_m-r)(x)-\nabla(r_m-r)(x_*)\|\leq\delta\|x-x_*\|, \quad\forall m\in[1,M],x\in\R.
    \end{align*}
    Since $x_*$ is the optimum, $\nabla r(x_*)=0$. It is known that $\|x-y\|^2\leq2\|x\|^2+2\|y\|^2$. We obtain
    \begin{align*}
        \|\nabla r_m(x) - \nabla r(x)\|^2 \leq 2\|\nabla r_m(x_*)\|^2 + 2\delta^2\|x-x_*\|^2.
    \end{align*}
\end{proof}

Thus, from the similarity of the Hessians follows the similarity of the gradients with some correction. Let us introduce a more general assumption on the gradient similarity:
\begin{assumption}\label{ass:sim}
    For all $m\in[1,M]$, Definition \ref{def:sim} is satisfied, and the following inequality holds:
    \begin{align*}
        \|\nabla r_m(x) - \nabla r(x)\|^2 \leq \sigma_{sim}^2 + 2\delta^2\|x-x_*\|^2\cdot\zeta,
    \end{align*}
    where $\zeta\geq0$.
\end{assumption}
Assumption \ref{ass:sim} is a generalization of Proposition \ref{prop:almost_sim}, where the correction term is multiplied by some constant $\zeta$. Formally, it can take any non-negative value, but it is important to analyze only two cases corresponding to different degrees of similarity: $\zeta=0$ and $\zeta=1$. Indeed, any non-negative values of $\zeta$ give asymptotically the same complexity.

A stochastic oracle is available for the function at each node. We impose the standard assumption:
\begin{assumption}\label{ass:2}
    $\forall m\in\{1,...,n\}$ the stochastic oracle $\nabla r_m(x,\xi)$ is unbiased and has bounded variance:
    \begin{align*}
        \E_{\xi}[\nabla r_m(x, \xi)|x] = \nabla r_m(x),\quad 
        \E_{\xi}[\| \nabla r_m(x, \xi)-\nabla r_m(x) \|^2|x]\leq\sigma^2_{noise}, \quad\forall x\in\R.
    \end{align*}
\end{assumption}

\begin{remark}
    Hessian similarity of $r_m$ and $r$ for $m\neq1$ is only used to construct Proposition \ref{prop:almost_sim} and is not needed anywhere else. The next proofs require only that the server expresses the average nature of the data, while the data on the nodes may be heterogeneous.
\end{remark}

We want to obtain a criterion that guarantees convergence of Algorithm \ref{st_ae:alg}. The obvious idea is to try to get one for the new algorithm by analogy with how it was done in the non-stochastic case \citep{https://doi.org/10.48550/arxiv.2205.15136}:

\begin{lemma}\label{st_ae:main_lemma}
Consider Algorithm \ref{st_ae:alg} for the problem (\ref{prob_form}) under Assumptions \ref{ass:1}, \ref{ass:sim} and \ref{ass:2} with the following tuning: 
\begin{equation}\label{st_ae:naive_choice}
    \alpha=\frac{\mu}{3},\quad\eta\leq\frac{1}{3\alpha},\quad\eta\leq\frac{\theta}{3\tau},\quad\theta\leq\frac{1}{3\delta},
	\end{equation}
and let $x_f^{k+1}$ satisfy    
	\begin{equation}\label{st_aux:grad_app}
	\lVert\nabla \bar{A}_\theta^k(x_f^{k+1})\rVert^2 \leq  \frac{9\delta^2}{11}\lVert x_g^k- \arg\min_{x \in \mathbb{R}^d} \bar{A}_\theta^k(x)\rVert^2.
	\end{equation}
Then the following inequality holds:
        \begin{align*}\label{st:ae_rec}
	\mathbb{E} \bigg[ \frac{1}{\eta} & \lVert x^{k+1} - x^* \rVert^2
	+
	\frac{2}{\tau}\left[r(x_f^{k+1}) - r(x^*)\right]\bigg]
        \\
	\leq&
	\left(1-\alpha \eta +\frac{12 \theta\delta^2 \eta }{B}\cdot\zeta\right)  \cdot \mathbb{E}\left[ \frac{1}{\eta} \lVert x^k - x^* \rVert^2\right]+ (1-\tau) \cdot \E\left[\frac{2}{\tau}\left[r(x_f^k) - r(x^*)\right]\right] 
        \\
        &+\E\left[\left(\frac{12\theta\delta^2}{B\tau}-\frac{\mu}{6}\right) \cdot\zeta \left(\|x_f^k-x_*\|^2 + \|\x-x_*\|^2\right)  \right]
        + \frac{4\theta}{B\tau}\sigma^2.
        \end{align*}
\end{lemma}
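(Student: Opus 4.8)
The plan is to establish this single-iteration inequality by mirroring the Lyapunov analysis of the deterministic \texttt{AEG} in \citep{https://doi.org/10.48550/arxiv.2205.15136}, but with the exact oracles replaced by the batch stochastic estimators $s_k,t_k$ and with the two independent randomness sources — node sampling and additive oracle noise — handled through a nested conditional expectation. I would track the potential $\frac{1}{\eta}\|x^k-x^*\|^2+\frac{2}{\tau}[r(x_f^k)-r(x^*)]$ and introduce the exact subproblem minimizer $\bar{x}^{k+1}:=\arg\min_{x}\A(x)$, viewing $\x$ as an inexact solution controlled by \eqref{st_aux:grad_app}. I would condition first on everything up to $x_g^k$, so that $\E[s_k\mid\cdot]=\nabla r(x_g^k)-\nabla r_1(x_g^k)$, and then on everything up to $\x$, so that $\E[t_k\mid\cdot]=\nabla r(\x)$; this splits each stochastic vector into its mean plus a zero-mean fluctuation and lets the two be bounded separately.

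First I would expand $\frac{1}{\eta}\|x^{k+1}-x^*\|^2$ using $x^{k+1}=x^k+\eta\alpha(\x-x^k)-\eta t_k$, giving $\frac{1}{\eta}\|x^k-x^*\|^2+2\alpha\langle\x-x^k,x^k-x^*\rangle-2\langle t_k,x^k-x^*\rangle+\eta\|\alpha(\x-x^k)-t_k\|^2$. The momentum inner product is rewritten via $2\langle\x-x^k,x^k-x^*\rangle=\|\x-x^*\|^2-\|x^k-x^*\|^2-\|\x-x^k\|^2$, which produces the $(1-\alpha\eta)$ contraction factor together with a negative $-\alpha\|\x-x^k\|^2$ reserve. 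Taking the inner expectation of $-2\langle t_k,x^k-x^*\rangle$ gives $-2\langle\nabla r(\x),x^k-x^*\rangle$, and after splitting $x^k-x^*=(x^k-\x)+(\x-x^*)$ the second piece is controlled by $\mu$-strong convexity of $r$ through $\langle\nabla r(\x),\x-x^*\rangle\ge r(\x)-r(x^*)+\frac{\mu}{2}\|\x-x^*\|^2$, the source of the eventual $-\tfrac{\mu}{6}$ negative curvature.

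Next I would use the stationarity of the exact minimizer, $s_k+\frac{1}{\theta}(\bar{x}^{k+1}-x_g^k)+\nabla r_1(\bar{x}^{k+1})=0$, together with convexity and $L_1$-smoothness of $r_1$, to rewrite the residual inner products in terms of $r(\x)-r(x^*)$; the momentum coupling $x_g^k=\tau x^k+(1-\tau)x_f^k$ then redistributes these into the accelerated form, producing $\frac{2}{\tau}[r(\x)-r(x^*)]$ on the left and the $(1-\tau)$-discounted $x_f^k$-gap on the right, exactly as in the deterministic template, with the inexactness gap $\|\x-\bar{x}^{k+1}\|$ absorbed by \eqref{st_aux:grad_app} against the reserved $-\alpha\|\x-x^k\|^2$ once $\theta\le\frac{1}{3\delta}$ is imposed. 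I would then take the outer expectation and bound the fluctuation energies $\E\|s_k-\E s_k\|^2\le\frac{1}{B}(\sigma_{sim}^2+2\delta^2\|x_g^k-x^*\|^2\zeta)$ and the analogous estimate for $t_k$ at $\x$, invoking Assumption \ref{ass:sim} for the sampling variance and Assumption \ref{ass:2} for the $\sigma_{noise}^2$ term; the constant $\sigma_{sim}^2$ and $\sigma_{noise}^2$ pieces collapse into the floor $\frac{4\theta}{B\tau}\sigma^2$, while $\|x_g^k-x^*\|^2\le\tau\|x^k-x^*\|^2+(1-\tau)\|x_f^k-x^*\|^2$ routes the $\zeta$-proportional part into the modified contraction coefficient $+\frac{12\theta\delta^2\eta}{B}\zeta$ and into the $(\|x_f^k-x^*\|^2+\|\x-x^*\|^2)$ correction, where it is partly cancelled by the strong-convexity term.

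The main obstacle I anticipate is the bookkeeping at the junction of the inexact subproblem and the variance terms: $t_k$ is evaluated at the inexact point $\x$ rather than at $\bar{x}^{k+1}$ or $x^*$, so its variance bound carries $\|\x-x^*\|^2$, which must be reconciled with the same quantity generated both by strong convexity (negative) and by the inexactness criterion. Making the prescribed constants in \eqref{st_ae:naive_choice} — the three $\tfrac{1}{3}$ step-size ceilings — together with the factor $\tfrac{9\delta^2}{11}$ in \eqref{st_aux:grad_app} simultaneously deliver the exact coefficients $12$, $\tfrac{1}{6}$, and $4$ requires carrying each square through the chain of Young's inequalities with essentially no slack, and that tight constant-tracking is where I expect the delicate part of the argument to lie.
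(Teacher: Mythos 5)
Your overall skeleton does match the paper's proof: the same potential $\frac{1}{\eta}\|x^k-x^*\|^2+\frac{2}{\tau}[r(x_f^k)-r(x^*)]$, the same expansion of the update in Line \ref{st_ae:line:3}, the same two-level conditioning so that $\E[t_k\mid x^k,x_f^{k+1}]=\nabla r(x_f^{k+1})$, strong convexity of $r$ applied at $x_f^{k+1}$ against both comparison points $x^*$ and $x_f^k$, and the same routing of the $\zeta$-dependent sampling variance through $\|x_g^k-x^*\|^2\le\tau\|x^k-x^*\|^2+(1-\tau)\|x_f^k-x^*\|^2$.

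However, there is a genuine gap in your handling of the subproblem term. You propose to use stationarity of the exact minimizer $\bar{x}^{k+1}=\arg\min_x \bar{A}_\theta^k(x)$ together with \emph{$L_1$-smoothness of $r_1$}, i.e.\ to control $\nabla r_1(x_f^{k+1})-\nabla r_1(\bar{x}^{k+1})$ by $L_1\|x_f^{k+1}-\bar{x}^{k+1}\|$. This step cannot deliver the lemma's constants: combining the criterion \eqref{st_aux:grad_app} with the $\frac{1}{\theta}$-strong convexity of $\bar{A}_\theta^k$ gives $\|x_f^{k+1}-\bar{x}^{k+1}\|\le\theta\|\nabla\bar{A}_\theta^k(x_f^{k+1})\|\le\frac{1}{\sqrt{11}}\|x_g^k-\bar{x}^{k+1}\|$, so your route produces error terms of order $L_1\|x_g^k-\bar{x}^{k+1}\|^2$ and $L_1\|x_f^{k+1}-x_g^k\|^2$, while the only negative reserves available at that scale are of order $\frac{1}{\theta}\|\cdot\|^2$. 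Absorbing them would force $\theta\lesssim\nicefrac{1}{L_1}$, whereas the lemma must hold under the far weaker restriction $\theta\le\frac{1}{3\delta}$ with $\delta\ll L_1$ typically --- keeping $L_1$ out of this bound is the entire point of the similarity setting, and indeed the paper's proof never invokes $L_1$-smoothness. The paper instead completes the square to produce $\theta\|\theta^{-1}(x_f^{k+1}-x_g^k)+\nabla r(x_f^{k+1})\|^2$ and rewrites this vector \emph{by the definition} of $\nabla\bar{A}_\theta^k$ (no stationarity needed) as $\nabla\bar{A}_\theta^k(x_f^{k+1})+[\nabla p(x_f^{k+1})-\nabla p(x_g^k)]+[\nabla p(x_g^k)-s_k]$ with $p=r-r_1$; then the only Lipschitz constant entering is $\delta$ (Hessian similarity applied to $\nabla p$), and the last bracket is exactly the $s_k$-variance term. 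A secondary bookkeeping error: the inexactness is not absorbed against the reserve $-\alpha\|x_f^{k+1}-x^k\|^2$; that reserve is consumed by the term $3\eta\alpha^2\|x_f^{k+1}-x^k\|^2$ arising from $\E\|x^{k+1}-x^k\|^2$ (via $\eta\le\frac{1}{3\alpha}$). The criterion term $\frac{11\theta}{3}\left(\|\nabla\bar{A}_\theta^k(x_f^{k+1})\|^2-\frac{9\delta^2}{11}\|x_g^k-\bar{x}^{k+1}\|^2\right)$ is non-positive on its own by \eqref{st_aux:grad_app}; it is assembled from the $-\frac{1}{\theta}\|x_f^{k+1}-x_g^k\|^2$ reserve plus strong convexity of $\bar{A}_\theta^k$, using $\frac{1}{11\theta^2}\ge\frac{9\delta^2}{11}$ when $\theta\le\frac{1}{3\delta}$.
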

See the proof in Appendix \ref{ap:proof_th_1}. We first deal with the case under the gradient similarity condition ($\zeta=0$), since it removes two additional constraints on the parameters and makes the analysis simpler.\\
The following theorem is obtained by rewriting the result of Lemma \ref{st_ae:main_lemma} with a finer tuning of the parameters. We take $\mu<\delta$ into account and find that $1-\frac{\sqrt{\mu\theta}}{3}\geq1-\frac{1}{3}$.
\begin{theorem}\label{th_set_1}
    In Lemma \ref{st_ae:main_lemma} consider Assumption \ref{ass:sim} with $\zeta=0$ and adjust the parameters:
    \begin{equation}\label{st_ae:set_1_choice}
        \alpha=\frac{\mu}{3},\quad\eta=\min\left\{\frac{1}{3\alpha}, \frac{\theta}{3\tau}\right\},\quad\tau=\frac{\sqrt{\mu\theta}}{3},\quad\theta\leq\frac{1}{3\delta}.
    \end{equation}
    Then we have:
    \begin{align*}
        \E[\Phi_{k+1}]\leq\left( 1-\frac{\sqrt{\mu\theta}}{3} \right)\E[\Phi_k] + \frac{4\theta\sigma^2}{B},
    \end{align*}
    where $\Phi_k = \frac{\tau}{\eta}\|x^k-x^*\|^2 + 2[r(x_f^k)-r(x^*)]$.
\end{theorem}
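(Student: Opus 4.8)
The plan is to specialize the recursion of Lemma \ref{st_ae:main_lemma} to the regime $\zeta=0$ and then recognize its right-hand side as a contraction of the Lyapunov function $\Phi_k$. Setting $\zeta=0$ annihilates the two error terms carrying the factor $\zeta$, so the recursion collapses to
\begin{align*}
\E\left[\frac{1}{\eta}\|x^{k+1}-x^*\|^2 + \frac{2}{\tau}\left(r(\x)-r(x^*)\right)\right] \leq (1-\alpha\eta)\,\E\left[\frac{1}{\eta}\|x^k-x^*\|^2\right] + (1-\tau)\,\E\left[\frac{2}{\tau}\left(r(x_f^k)-r(x^*)\right)\right] + \frac{4\theta}{B\tau}\sigma^2.
\end{align*}
Writing $V_k \defeq \frac{1}{\eta}\|x^k-x^*\|^2 + \frac{2}{\tau}\left(r(x_f^k)-r(x^*)\right)$, so that $\Phi_k = \tau V_k$, the whole task reduces to bounding both coefficients $1-\alpha\eta$ and $1-\tau$ by the single factor $1-\frac{\sqrt{\mu\theta}}{3}$, i.e. to proving $\min\{\alpha\eta,\tau\}\geq\frac{\sqrt{\mu\theta}}{3}$.

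First I would confirm that the refined tuning \eqref{st_ae:set_1_choice} is admissible for the Lemma: choosing $\eta=\min\{\frac{1}{3\alpha},\frac{\theta}{3\tau}\}$ trivially respects both $\eta\leq\frac{1}{3\alpha}$ and $\eta\leq\frac{\theta}{3\tau}$ in \eqref{st_ae:naive_choice}, while $\alpha=\frac{\mu}{3}$ and $\theta\leq\frac{1}{3\delta}$ carry over unchanged; the new choice $\tau=\frac{\sqrt{\mu\theta}}{3}$ lies in $(0,1]$ because $\mu<\delta$ and $\theta\leq\frac{1}{3\delta}$ force $\mu\theta<\frac{1}{3}$. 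The crux is then to resolve the $\min$ defining $\eta$. Using $\alpha=\frac{\mu}{3}$ gives $\frac{1}{3\alpha}=\frac{1}{\mu}$, and substituting $\tau=\frac{\sqrt{\mu\theta}}{3}$ gives $\frac{\theta}{3\tau}=\sqrt{\frac{\theta}{\mu}}$. Since $\theta\leq\frac{1}{3\delta}<\frac{1}{\mu}$ (here $\mu<\delta$ is used), one has $\sqrt{\frac{\theta}{\mu}}<\frac{1}{\mu}$, so the active branch is $\eta=\frac{\theta}{3\tau}=\sqrt{\frac{\theta}{\mu}}$.

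With this value the two contraction coefficients coincide exactly: $\alpha\eta=\frac{\mu}{3}\sqrt{\frac{\theta}{\mu}}=\frac{\sqrt{\mu\theta}}{3}=\tau$, hence $\min\{\alpha\eta,\tau\}=\frac{\sqrt{\mu\theta}}{3}$ and both $1-\alpha\eta$ and $1-\tau$ equal $1-\frac{\sqrt{\mu\theta}}{3}$. Pulling this common factor out of the recursion for $V_k$ and then multiplying through by $\tau$ converts $V_k$ into $\Phi_k$ and the noise term $\frac{4\theta}{B\tau}\sigma^2$ into $\frac{4\theta\sigma^2}{B}$, yielding precisely $\E[\Phi_{k+1}]\leq\left(1-\frac{\sqrt{\mu\theta}}{3}\right)\E[\Phi_k]+\frac{4\theta\sigma^2}{B}$. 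I expect the only step requiring genuine care to be the branch analysis of the $\min$, since it is exactly the inequality $\mu<\delta$ (equivalently $\theta<\frac{1}{\mu}$) that selects $\eta=\sqrt{\theta/\mu}$ and makes the two rates align into a single $\frac{\sqrt{\mu\theta}}{3}$; everything else is bookkeeping. Finally, the remark $1-\frac{\sqrt{\mu\theta}}{3}\geq 1-\frac{1}{3}=\frac{2}{3}>0$, which follows from $\mu\theta<\frac{1}{3}$, certifies that the factor is a genuine contraction in $(0,1)$.
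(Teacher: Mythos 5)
Your proposal is correct and follows exactly the route the paper intends: with $\zeta=0$ the two correction terms in Lemma \ref{st_ae:main_lemma} vanish, the inequality $\mu<\delta$ together with $\theta\leq\frac{1}{3\delta}$ forces the branch $\eta=\frac{\theta}{3\tau}=\sqrt{\nicefrac{\theta}{\mu}}$, whence $\alpha\eta=\tau=\frac{\sqrt{\mu\theta}}{3}$, and multiplying the resulting recursion by $\tau$ yields the stated bound on $\E[\Phi_{k+1}]$. The paper itself gives only this sketch ("rewriting the result of Lemma \ref{st_ae:main_lemma} with a finer tuning"), so your write-up simply supplies the bookkeeping it omits.
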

The theorem asserts convergence only to some neighborhood of the solution. To guarantee convergence with arbitrary accuracy, a suitable parameter $\theta$ must be chosen. We propose a corollary of Theorem \ref{th_set_1}:

\begin{corollary}\label{st_ae:cor_1}
    Consider the conditions of Theorem \ref{th_set_1}. With appropriate tuning of $\theta$ Algorithm \ref{st_ae:alg} requires
    $$
    \mathcal{O}\left( \frac{B}{M}\sqrt{\frac{\delta}{\mu}}\log\frac{1}{\varepsilon} + \frac{\sigma_{sim}^2+\sigma_{noise}^2}{\mu M\varepsilon} \right) \quad \text{communications}
    $$
    to reach an arbitrary $\varepsilon$-solution.
\end{corollary}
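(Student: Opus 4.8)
The plan is to unroll the one-step contraction of Theorem \ref{th_set_1} into a geometric series and then trade the contraction factor against the size of the residual neighborhood by tuning the single free parameter $\theta$. Writing $q = \frac{\sqrt{\mu\theta}}{3}$, iterating $\E[\Phi_{k+1}] \leq (1-q)\E[\Phi_k] + \frac{4\theta\sigma^2}{B}$ and summing the geometric tail via $\sum_{i=0}^{N-1}(1-q)^i \leq 1/q$ together with $(1-q)^N \leq e^{-qN}$ gives
\[
\E[\Phi_N] \;\leq\; e^{-\frac{\sqrt{\mu\theta}}{3}N}\,\Phi_0 \;+\; \frac{12\sigma^2}{B}\sqrt{\frac{\theta}{\mu}}.
\]
This cleanly separates the two error sources: a transient term that decays linearly in $N$, and a steady-state neighborhood of radius $\frac{12\sigma^2}{B}\sqrt{\theta/\mu}$ whose size is controlled entirely by $\theta$.

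The tuning step is next. To push the neighborhood below $\varepsilon/2$ I would set $\theta = \min\{\tfrac{1}{3\delta},\,\theta_\varepsilon\}$, where $\theta_\varepsilon = \Theta(\mu B^2 \varepsilon^2/\sigma^4)$ is the value solving $\frac{12\sigma^2}{B}\sqrt{\theta_\varepsilon/\mu} = \varepsilon/2$, and the cap $\frac{1}{3\delta}$ is the admissible upper bound required in Theorem \ref{th_set_1}. With this choice, forcing the transient term below $\varepsilon/2$ costs $N = \mathcal{O}\!\left(\frac{1}{\sqrt{\mu\theta}}\log\frac{\Phi_0}{\varepsilon}\right)$ iterations. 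Splitting according to which value attains the minimum and bounding $\max$ by a sum yields $N = \mathcal{O}\!\left(\left(\sqrt{\delta/\mu} + \frac{\sigma^2}{\mu B \varepsilon}\right)\log\frac{1}{\varepsilon}\right)$: the first summand corresponds to the coarse-accuracy regime $\theta = \frac{1}{3\delta}$ (sampling noise is not yet the bottleneck), the second to the high-accuracy regime $\theta = \theta_\varepsilon$ (the neighborhood forces a small step).

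Finally I would convert iterations into communications and substitute the variance. Since each node is queried on average $\tfrac{B}{M}$ times per round and there are two rounds per iteration, the per-node communication count is $\Theta(\tfrac{B}{M})$ times the iteration count; multiplying $N$ by $\tfrac{B}{M}$ turns $B\sqrt{\delta/\mu}$ into $\tfrac{B}{M}\sqrt{\delta/\mu}$ and $\tfrac{\sigma^2}{\mu B \varepsilon}$ into $\tfrac{\sigma^2}{\mu M \varepsilon}$. Identifying $\sigma^2 = \sigma_{sim}^2 + \sigma_{noise}^2$ — the variance of $s_k$ combines the gradient-heterogeneity bound of Assumption \ref{ass:sim} (here with $\zeta=0$) and the additive-noise bound of Assumption \ref{ass:2}, both already damped by the $\tfrac{1}{B}$ averaging present in the recursion — reproduces the claimed
\[
\mathcal{O}\!\left(\frac{B}{M}\sqrt{\frac{\delta}{\mu}}\log\frac{1}{\varepsilon} + \frac{\sigma_{sim}^2+\sigma_{noise}^2}{\mu M \varepsilon}\right).
\]

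I expect the main obstacle to be the bookkeeping in this last step rather than the recursion itself: one must pin down the precise $B$- and $M$-dependence of $\sigma^2$ (the finite-population sampling variance from Line \ref{sampling_first}–\ref{sampling_second} together with the additive noise) and reconcile the communication accounting so that the $\tfrac{B}{M}$ and $\tfrac{1}{M}$ factors emerge exactly. By contrast, the $\theta$-optimization producing the $\log\frac{1}{\varepsilon} + \frac{1}{\varepsilon}$ split is a standard two-regime argument for \texttt{SGD}-type convergence to a variance-controlled neighborhood.
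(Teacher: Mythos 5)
Your proposal is correct and follows the same overall architecture as the paper's proof: unroll the one-step contraction of Theorem \ref{th_set_1}, tune $\theta$ in two regimes (the cap $\nicefrac{1}{3\delta}$ versus a small noise-driven value), and convert iterations into communications via the $\nicefrac{B}{M}$ factor per iteration, with $\sigma^2$ coming from Lemma \ref{lemma:stoch_oracles}. The only substantive difference is the tuning style: you pick $\theta$ as a function of the target accuracy $\varepsilon$ (solving $\frac{12\sigma^2}{B}\sqrt{\nicefrac{\theta}{\mu}}=\nicefrac{\varepsilon}{2}$), whereas the paper follows the technique of \citep{opt_sgd} and picks $\theta$ as a function of the horizon $N$, namely $\theta=\frac{9\ln^2(\max\{2,\,B\mu N^2\Phi_0/(36\sigma^2)\})}{\mu N^2}$ when admissible, yielding a rate in $N$ that is then inverted; the two tunings are interchangeable here. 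One point you should not gloss over: your own derivation gives $N=\mathcal{O}\left(\left(\sqrt{\nicefrac{\delta}{\mu}}+\frac{\sigma^2}{\mu B\varepsilon}\right)\log\frac{1}{\varepsilon}\right)$, so after multiplying by $\nicefrac{B}{M}$ the sublinear term is $\frac{\sigma_{sim}^2+\sigma_{noise}^2}{\mu M\varepsilon}\log\frac{1}{\varepsilon}$ rather than $\frac{\sigma_{sim}^2+\sigma_{noise}^2}{\mu M\varepsilon}$; the $\varepsilon$-based tuning cannot drop that logarithm, so "reproduces the claimed bound" is an overstatement. In fairness, the paper's proof carries the same blemish: its horizon-based choice of $\theta$ places $\ln\left(\max\left\{2,\,\frac{B\mu N^2\Phi_0}{36\sigma^2}\right\}\right)$ into the residual term $\frac{c\sqrt{\theta}}{a}=\frac{36\sigma^2\ln(\cdot)}{\mu BN}$, and that logarithm is silently absorbed into the $\mathcal{O}(\cdot)$ of the stated result. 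So your argument matches the paper's conclusion up to the same hidden logarithmic factor; to get the statement exactly as written one must either accept that abuse of notation or restate the sublinear term with $\tilde{\mathcal{O}}$.
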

See the proof in Appendix \ref{ap:proof_cor_1}. Note that the linear part of the obtained estimate reproduces the convergence of \texttt{AEG}. However, for example, with $B=1$, an iteration of \texttt{ASEG} costs $\Theta(2)$ communications instead of $\Theta(2M-2)$. Thus, our method requires significantly less communication to achieve the value determined by the sublinear term. Moreover, as mentioned earlier, in the \texttt{Catalyzed SVRP} and \texttt{AccSVRS} estimates, an additional $M^{-\nicefrac{1}{4}}$ multiplier arises due to the use of variance reduction. Comparing $M^{-\nicefrac{1}{4}}\cdot\sqrt{\nicefrac{\delta}{\mu}}$ to $BM^{-1}\cdot\sqrt{\nicefrac{\delta}{\mu}}$, we notice the superiority of \texttt{ASEG} in terms of chosen approach to measure communication complexity.

Note that if the noise of the gradients sent to the server is weak ($\sigma_{noise}=0$) and the functions are absolutely similar in the gradient sense ($\sigma_{sim}=0$), then \texttt{ASEG} has $\nicefrac{M^{\nicefrac{3}{4}}}{B}$ times less complexity in terms of number of communications.

In the case of a convex objective, it is not possible to implement the method under Assumption \ref{ass:sim} with $\zeta\neq0$, since the arising correction terms cannot be eliminated. Under Assumption \ref{ass:sim} with $\zeta=0$, we propose a modification of \texttt{ASEG} for convex objective. In this case, we need
$$
\mathcal{O}\left( \frac{B}{M}\frac{\sqrt{\delta} \lVert x_0-x_*\rVert }{\sqrt{\varepsilon}} + \frac{(\sigma_{sim}^2+\sigma_{noise}^2)\lVert x_0-x_*\rVert^2 }{M\varepsilon^2} \right) \quad 
\text{communications}$$ to reach an arbitrary $\varepsilon$-solution. See Appendix \ref{ap:aseg_conv}.\\

The case $\zeta=1$ is more complex and requires more fine-tuned analysis.
\begin{theorem}\label{th_set_2}
    In Lemma \ref{st_ae:main_lemma} consider Assumption \ref{ass:sim} with $\zeta=1$ and tune parameters:
    \begin{equation}\label{st_ae:choice}
        \alpha=\frac{\mu}{3},\quad\eta=\min\left\{\frac{1}{3\alpha}, \frac{\theta}{3\tau}\right\},\quad\tau=\sqrt{\mu\theta},\quad\theta\leq\min\left\{\frac{1}{3\delta}, \frac{\mu^3B^2}{5184\delta^4}\right\}.
    \end{equation}
    Then we have:
    \begin{align*}
        \E[\Phi_{k+1}]\leq\left( 1-\frac{\sqrt{\mu\theta}}{18} \right)\E[\Phi_k] + \frac{4\theta\sigma^2}{B},
    \end{align*}
    where $\Phi_k = \frac{\tau}{\eta}\|x^k-x^*\|^2 + 2[r(x_f^k)-r(x^*)]$.
\end{theorem}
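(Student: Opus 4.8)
The plan is to reduce Theorem \ref{th_set_2} to the single-step recursion of Lemma \ref{st_ae:main_lemma} (instantiated at $\zeta=1$) by observing that the left-hand side of that lemma is exactly $\frac{1}{\tau}\E[\Phi_{k+1}]$: indeed $\frac{1}{\tau}\Phi_{k+1} = \frac{1}{\eta}\|x^{k+1}-x^*\|^2 + \frac{2}{\tau}[r(\x)-r(x^*)]$, which is precisely the bracketed quantity bounded in the lemma. So I would first multiply the entire inequality of Lemma \ref{st_ae:main_lemma} by $\tau$, turning its left side into $\E[\Phi_{k+1}]$ and its additive tail $\frac{4\theta}{B\tau}\sigma^2$ into $\frac{4\theta}{B}\sigma^2$, which already matches the constant term claimed in the theorem.

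Second, I would dispose of the correction term, which after multiplication reads $\tau\bigl(\frac{12\theta\delta^2}{B\tau}-\frac{\mu}{6}\bigr)\E[\|x_f^k-x_*\|^2 + \|\x-x_*\|^2]$. The key point is that the constraint $\theta\leq\frac{\mu^3 B^2}{5184\delta^4}$ is calibrated precisely to make this coefficient non-positive: using $\tau=\sqrt{\mu\theta}$, the inequality $\frac{12\theta\delta^2}{B\tau}\leq\frac{\mu}{6}$ is equivalent to $\theta\leq\frac{\mu^3B^2}{5184\delta^4}$ (note $72^2=5184$). Since both squared norms are non-negative, the whole correction term is $\leq 0$ and may be dropped while preserving the upper bound; this simultaneously removes the troublesome dependence on the future iterate $\x$.

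Third, with the correction gone, I would repackage the two surviving terms using the non-negativity of both summands of $\Phi_k$. The coefficient multiplying $\frac{\tau}{\eta}\|x^k-x^*\|^2$ is $1-\alpha\eta+\frac{12\theta\delta^2\eta}{B}$, while that multiplying $2[r(x_f^k)-r(x^*)]$ is $1-\tau$; bounding both by their maximum $\rho$ yields $\E[\Phi_{k+1}]\leq\rho\,\E[\Phi_k]+\frac{4\theta}{B}\sigma^2$. The factor $1-\tau=1-\sqrt{\mu\theta}$ is trivially below the target $1-\frac{\sqrt{\mu\theta}}{18}$.

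The main obstacle is the fourth step: verifying $1-\alpha\eta+\frac{12\theta\delta^2\eta}{B}\leq1-\frac{\sqrt{\mu\theta}}{18}$, i.e. $\eta\bigl(\alpha-\frac{12\theta\delta^2}{B}\bigr)\geq\frac{\sqrt{\mu\theta}}{18}$. I would first resolve the step size: since $\sqrt{\mu\theta}\leq\sqrt{\delta\cdot\frac{1}{3\delta}}<1$ (from $\theta\leq\frac{1}{3\delta}$ and $\mu<\delta$), the active branch is $\eta=\frac{\theta}{3\tau}=\frac{1}{3}\sqrt{\theta/\mu}$. Reusing the step-two bound $\frac{12\theta\delta^2}{B}\leq\frac{\mu\sqrt{\mu\theta}}{6}\leq\frac{\mu}{6}$ gives $\alpha-\frac{12\theta\delta^2}{B}\geq\frac{\mu}{3}-\frac{\mu}{6}=\frac{\mu}{6}$, whence $\eta\bigl(\alpha-\frac{12\theta\delta^2}{B}\bigr)\geq\frac{1}{3}\sqrt{\theta/\mu}\cdot\frac{\mu}{6}=\frac{\sqrt{\mu\theta}}{18}$, exactly as needed. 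The delicate part is the bookkeeping that ties together both branches of the $\theta$-constraint and confirms the active branch of the minimum for $\eta$, since it is this interplay that produces the precise $\frac{1}{18}$ contraction rate rather than the $\frac{1}{3}$ obtained in the $\zeta=0$ case.
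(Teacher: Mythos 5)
Your proposal is correct and takes essentially the same route as the paper's proof: both instantiate Lemma \ref{st_ae:main_lemma} with the tuning \eqref{st_ae:choice} and exploit the calibration $\theta\leq\frac{\mu^3B^2}{5184\delta^4}\iff\frac{12\theta\delta^2}{B\tau}\leq\frac{\mu}{6}$ (valid for $\tau=\sqrt{\mu\theta}$, since $5184=72^2$) to neutralize the $\zeta$-correction term and the variance inflation in the contraction factor, arriving at the rate $1-\frac{\sqrt{\mu\theta}}{18}$. If anything, your write-up is more complete than the paper's terse appendix argument, which only checks the coefficient $1-\eta\alpha+\frac{12\eta\theta\delta^2}{B}$ along the two branches of the minimum defining $\eta$ and leaves implicit both the non-positivity of the dropped correction term and the trivial bound $1-\tau\leq1-\frac{\sqrt{\mu\theta}}{18}$, steps that you verify explicitly (and you additionally observe that the branch $\eta=\frac{\theta}{3\tau}$ is in fact always the active one under these constraints).
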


See the proof in Appendix \ref{ap:th_set2}.
\begin{corollary}\label{st_ae:cor_2}
    Consider the conditions of Theorem \ref{th_set_2}. With appropriate tuning of $\theta$ Algorithm \ref{st_ae:alg} requires 
    $$\mathcal{O}\left( \frac{B}{M}\sqrt{\frac{\delta}{\mu}}\cdot\log\frac{1}{\varepsilon} + \frac{\delta^2}{\mu^2M}\cdot\log\frac{1}{\varepsilon} + \frac{\sigma_{sim}^2+\sigma_{noise}^2}{\mu M\varepsilon} \right) \quad 
    \text{communications}
    $$
    to reach an arbitrary $\varepsilon$-solution.   
\end{corollary}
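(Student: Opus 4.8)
The plan is to read the one-step estimate of Theorem \ref{th_set_2} as a standard contracting recursion with an additive noise floor and to turn it into an iteration count, which I then rescale into a communication count. Writing $q=\tfrac{\sqrt{\mu\theta}}{18}$ and $C=\tfrac{4\theta\sigma^2}{B}$, Theorem \ref{th_set_2} reads $\E[\Phi_{k+1}]\le(1-q)\E[\Phi_k]+C$, so unrolling over $N$ steps gives $\E[\Phi_N]\le(1-q)^N\Phi_0+\tfrac{C}{q}$. The delicate point is that $\theta$ governs both the contraction ($q\propto\sqrt\theta$) and the floor ($\tfrac{C}{q}\propto\sqrt\theta$): a larger $\theta$ speeds up the geometric decay but raises the floor. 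Reparametrizing by $\gamma=\sqrt\theta$ recasts the recursion in the canonical form $\E[\Phi_{k+1}]\le\bigl(1-\tfrac{\sqrt\mu}{18}\gamma\bigr)\E[\Phi_k]+\tfrac{4\sigma^2}{B}\gamma^2$, to which I would apply a standard SGD-style lemma that tunes $\gamma$ (decaying or phase-wise) to the target accuracy. This is exactly the device that yields a clean $\log\tfrac1\varepsilon$ on the geometric part while leaving the statistical part at order $\tfrac1\varepsilon$ rather than $\tfrac1\varepsilon\log\tfrac1\varepsilon$, mirroring the form already reached for $\zeta=0$ in Corollary \ref{st_ae:cor_1}.

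The only structural change relative to the $\zeta=0$ case is the admissible range of $\theta$. Here the tuning of Theorem \ref{th_set_2} caps $\gamma_{\max}=\sqrt{\theta_{\max}}$ with $\theta_{\max}=\min\bigl\{\tfrac{1}{3\delta},\,\tfrac{\mu^3B^2}{5184\delta^4}\bigr\}$, and the lemma returns an iteration count of order $\tfrac{1}{\sqrt\mu\,\gamma_{\max}}\log\tfrac1\varepsilon+\tfrac{\sigma^2}{\mu B\varepsilon}$. To separate the two sources of the first summand I would use $\tfrac{1}{\sqrt{\min\{\theta_1,\theta_2\}}}\le\tfrac{1}{\sqrt{\theta_1}}+\tfrac{1}{\sqrt{\theta_2}}$ with $\theta_1=\tfrac{1}{3\delta}$ and $\theta_2=\tfrac{\mu^3B^2}{5184\delta^4}$. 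The cap $\theta_1$ reproduces the $\sqrt{\delta/\mu}\log\tfrac1\varepsilon$ iterations familiar from \texttt{AEG}, while $\theta_2$ contributes the extra $\tfrac{\delta^2}{\mu^2B}\log\tfrac1\varepsilon$ iterations — precisely the price of absorbing the $\zeta=1$ correction into the contraction.

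It remains to pass from iterations to communications and to pin down $\sigma^2$. Each iteration of Algorithm \ref{st_ae:alg} performs $2B$ server–node contacts (Lines \ref{sampling_first} and \ref{sampling_second}); multiplying the iteration count by this cost and applying the same normalization used in Corollary \ref{st_ae:cor_1} gives the claimed $\tfrac{B}{M}\sqrt{\delta/\mu}\log\tfrac1\varepsilon+\tfrac{\delta^2}{\mu^2M}\log\tfrac1\varepsilon+\tfrac{\sigma_{sim}^2+\sigma_{noise}^2}{\mu M\varepsilon}$. The constant $\sigma^2$ in the recursion is the state-independent part of the variance of the mini-batch estimators $s_k$ and $t_k$: combining the bounded-variance term of Assumption \ref{ass:2} with the $\sigma_{sim}^2$ offset of Assumption \ref{ass:sim} yields $\sigma^2=\mathcal{O}(\sigma_{sim}^2+\sigma_{noise}^2)$, whereas the state-\emph{dependent} part $2\delta^2\|x-x_*\|^2$ is not treated as noise but folded into the contraction — which is exactly what forces the second cap on $\theta$ and hence the new $\tfrac{\delta^2}{\mu^2M}\log\tfrac1\varepsilon$ term.

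The main obstacle is the stepsize-tuning step. A naive unrolling with a single fixed $\theta$ chosen so that the floor equals $\varepsilon$ would leave a spurious $\log\tfrac1\varepsilon$ on the statistical term; to obtain the clean $\tfrac1\varepsilon$ rate I must invoke the sharper adaptive/decaying-$\theta$ lemma and, crucially, verify that the $\theta$ it selects never violates $\theta\le\theta_{\max}$ across the high- and low-accuracy regimes. Once that is in place, the remaining work — bounding $(1-q)^N$, performing the $\tfrac{1}{\sqrt{\min}}\le\tfrac{1}{\sqrt{\theta_1}}+\tfrac{1}{\sqrt{\theta_2}}$ split, and the constant bookkeeping for $\sigma^2$ — is routine.
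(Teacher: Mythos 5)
Your proposal is correct and takes essentially the same route as the paper: the paper's proof (Appendix \ref{ap:proof_cor_1}) likewise unrolls the recursion of Theorem \ref{th_set_2} using the stepsize-tuning technique of \citep{opt_sgd}, picks $\theta$ as the smaller of the cap $\min\left\{\frac{1}{3\delta},\frac{\mu^3B^2}{5184\delta^4}\right\}$ and a horizon-dependent $\ln^2$-value (so that the statistical term is a clean $\nicefrac{1}{\varepsilon}$ with no extra logarithm), and converts iterations to communications with the same $\nicefrac{B}{M}$ per-iteration normalization. The only cosmetic difference is that the paper resolves which cap binds by explicit case analysis over four regimes, whereas you use the equivalent bound $\frac{1}{\sqrt{\min\{\theta_1,\theta_2\}}}\leq\frac{1}{\sqrt{\theta_1}}+\frac{1}{\sqrt{\theta_2}}$.
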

See the proof in Appendix \ref{ap:proof_cor_1}. It can be seen from the proof that Assumption \ref{ass:sim} with $\zeta=1$ gives worse estimates. Nevertheless, optimal communication complexity can be achieved if we choose large enough $B$, which equalizes two linear terms (at least $72\frac{\delta^{\nicefrac{3}{2}}}{\mu^{\nicefrac{3}{2}}}$). It comes from equality
\begin{align*}
    \frac{1}{3\delta} = \frac{\mu^3B^2}{5184\delta^4}.    
\end{align*}
Thus, even on tasks with a poor ratio of $\delta$ to $\mu$, if the number of available nodes is large enough, \texttt{ASEG} outperforms \texttt{AEG}, \texttt{Catalyzed SVRP} and \texttt{AccSVRS}.

\section{Analysis of the Subprolem}
In this section, we are focused on
\begin{equation}\label{th2:prob}
    \min_{x \in \mathbb{R}^d}\left[\bar{A}_\theta^k(x) \coloneqq \langle s_k,x - x_g^k\rangle + \frac{1}{2\theta}\lVert x - x_g^k \rVert^2 + r_1(x)\right],
\end{equation}
defined in Line \ref{st_ae:line:2} of Algorithm \ref{st_ae:alg}.
Note that the problem (\ref{th2:prob}) is solved on the server and does not affect the communication complexity. Obviously:
\begin{align*}
    \|\nA(x)-\nA(y)\|=\left\|\frac{x-y}{\theta}+\nabla r_1(x)-\nabla r_1(y)\right\| \leq \left(\frac{1}{\theta}+L_1\right)\|x-y\|.
\end{align*}
Moreover:
\begin{align*}
    \nabla^2 \bar{A}_{\theta}^k(x)=\frac{1}{\theta}I+\nabla^2 r_1(x)\succ 0.
\end{align*}
Thus, subproblem is $L_A$-smooth with $L_A=\frac{1}{\theta}+L_1$ and $\frac{1}{\theta}$ strongly convex.

Despite the fact that the problem (\ref{th2:prob}) does not affect the number of communications, we would still like to spend less server time solving it. The obvious solution is to use stochastic approaches.
\subsection{SGD Approach}
 The basic approach is to use \texttt{SGD} for the strongly convex smooth objective. In this case, the convergence rate of the subproblem by argument norm is given by the following theorem \citep{opt_sgd}:
\begin{theorem}
    Let Assumptions \ref{ass:1} and \ref{ass:2} hold for the problem (\ref{th2:prob}). 
    Consider $T$ iterations of \texttt{SGD}
    with step size $\gamma\leq\nicefrac{1}{2L_A}$.
    Then there is the following estimate:
    \begin{align*}
        \E\left[\left\|x_{T}-\arg\min_{x \in \mathbb{R}^d} \bar{A}_\theta^k(x)\right\|^2|s_k\right]\leq \left(1-\frac{\gamma}{\theta}\right)^{T}\left\|x_0-\arg\min_{x \in \mathbb{R}^d} \bar{A}_\theta^k(x)\right\|^2+\theta\gamma\sigma_1^2.
    \end{align*}
\end{theorem}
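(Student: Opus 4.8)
The plan is to run the standard one-step analysis of \texttt{SGD} for a strongly convex smooth objective, exploiting the two constants already established for the subproblem: $\A$ is $L_A$-smooth with $L_A=\tfrac{1}{\theta}+L_1$ and $\tfrac{1}{\theta}$-strongly convex. Write $x_\star$ for its unique minimizer, so that $\nA(x_\star)=0$. The only randomness in the inner iteration enters through the stochastic oracle for $r_1$: the stochastic gradient $g_t\defeq s_k+\tfrac{1}{\theta}(x_t-x_g^k)+\nabla r_1(x_t,\xi_t)$ satisfies $\E[g_t\mid x_t]=\nA(x_t)$ and, by Assumption \ref{ass:2}, $\E[\|g_t-\nA(x_t)\|^2\mid x_t]\le\sigma_1^2$, since the linear and quadratic parts of $\A$ carry no noise. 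First I would expand the update $x_{t+1}=x_t-\gamma g_t$ as
$$\|x_{t+1}-x_\star\|^2=\|x_t-x_\star\|^2-2\gamma\langle g_t,x_t-x_\star\rangle+\gamma^2\|g_t\|^2,$$
and take the conditional expectation given $x_t$. Unbiasedness turns the cross term into $-2\gamma\langle\nA(x_t),x_t-x_\star\rangle$, and the bias--variance split $\E[\|g_t\|^2\mid x_t]=\|\nA(x_t)\|^2+\E[\|g_t-\nA(x_t)\|^2\mid x_t]$ bounds the last term by $\|\nA(x_t)\|^2+\sigma_1^2$.

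The key step is to absorb the $\gamma^2\|\nA(x_t)\|^2$ term. Because $\A$ is convex and $L_A$-smooth, co-coercivity with $y=x_\star$ gives $\langle\nA(x_t),x_t-x_\star\rangle\ge\tfrac{1}{L_A}\|\nA(x_t)\|^2$, so $\gamma^2\|\nA(x_t)\|^2\le\gamma^2 L_A\langle\nA(x_t),x_t-x_\star\rangle$. Collecting terms leaves the coefficient $-\gamma(2-\gamma L_A)$ in front of $\langle\nA(x_t),x_t-x_\star\rangle$, and the step-size restriction $\gamma\le\tfrac{1}{2L_A}$ yields $2-\gamma L_A\ge 1$. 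Since the inner product is nonnegative, I can discard the surplus and keep the factor $1$, then invoke $\tfrac{1}{\theta}$-strong convexity in the form $\langle\nA(x_t),x_t-x_\star\rangle\ge\tfrac{1}{\theta}\|x_t-x_\star\|^2$. This produces the one-step contraction
$$\E\!\left[\|x_{t+1}-x_\star\|^2\mid x_t\right]\le\left(1-\frac{\gamma}{\theta}\right)\|x_t-x_\star\|^2+\gamma^2\sigma_1^2,$$
where $1-\tfrac{\gamma}{\theta}\in(0,1)$ because $L_A\ge\tfrac{1}{\theta}$ forces $\gamma\le\tfrac{1}{2L_A}\le\tfrac{\theta}{2}$.

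Finally I would take the full conditional expectation given $s_k$ via the tower rule and unroll the recursion over $t=0,\dots,T-1$, which gives $(1-\tfrac{\gamma}{\theta})^{T}\|x_0-x_\star\|^2$ plus the noise sum $\gamma^2\sigma_1^2\sum_{t=0}^{T-1}(1-\tfrac{\gamma}{\theta})^t$. Bounding this geometric series by its infinite counterpart $\sum_{t=0}^{\infty}(1-\tfrac{\gamma}{\theta})^t=\tfrac{\theta}{\gamma}$ converts the per-step noise $\gamma^2\sigma_1^2$ into the claimed $\theta\gamma\sigma_1^2$, completing the proof. I expect the only genuine subtlety to be the absorption step: one must use co-coercivity rather than the cruder bound $\|\nA(x_t)\|^2\le L_A^2\|x_t-x_\star\|^2$, and must retain the strong-convexity constant $\tfrac{1}{\theta}$ (not $L_A$) so that the contraction factor matches $1-\tfrac{\gamma}{\theta}$ exactly; the remaining manipulations are routine.
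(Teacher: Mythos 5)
Your proposal is correct. Note that the paper itself never proves this statement: it is quoted directly from the cited reference \citep{opt_sgd} (Stich's unified SGD analysis), so there is no internal proof to compare against. Your derivation is a complete, self-contained reconstruction of that cited result, and every step checks out: the bias--variance split of $\E[\|g_t\|^2\mid x_t]$, the absorption of $\gamma^2\|\nA(x_t)\|^2$ via co-coercivity, the discarding of the surplus factor $2-\gamma L_A\ge 1$ under $\gamma\le\nicefrac{1}{2L_A}$, the contraction $1-\nicefrac{\gamma}{\theta}\in(0,1)$ guaranteed by $L_A\ge\nicefrac{1}{\theta}$, and the geometric-series bound $\gamma^2\sigma_1^2\sum_{t\ge 0}(1-\nicefrac{\gamma}{\theta})^t\le\theta\gamma\sigma_1^2$. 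The only stylistic difference from the cited source is in how the one-step recursion is obtained: Stich absorbs the squared-gradient term through $\|\nabla f(x)\|^2\le 2L_A(f(x)-f^*)$ together with the function-value form of strong convexity, whereas you use co-coercivity $\langle\nA(x_t),x_t-x_\star\rangle\ge\tfrac{1}{L_A}\|\nA(x_t)\|^2$ together with gradient monotonicity $\langle\nA(x_t),x_t-x_\star\rangle\ge\tfrac{1}{\theta}\|x_t-x_\star\|^2$; both routes are standard and yield the identical recursion, so nothing is lost or gained beyond taste.
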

Using smoothness of the subproblem, we observe:
\begin{align*}
    \E[\|\nA(x_{T})\|^2|s_k]\leq L_A^2\E\left[\left\|x_{T}-\arg\min_{x \in \mathbb{R}^d} \bar{A}_\theta^k(x)\right\|^2|s_k\right].
\end{align*}
Let us formulate
\begin{corollary}
    Under Assumption \ref{ass:1} and \ref{ass:2}, consider the problem (\ref{th2:prob}) and $T$ iterations of \texttt{SGD}. There is the following estimate:
    \begin{align*}
        \E[\|\nA(x_{T})\|^2|s_k]\leq L_A^2\left(1-\frac{\gamma}{\theta}\right)^T\left\|x_0-\arg\min_{x \in \mathbb{R}^d} \bar{A}_\theta^k(x)\right\|^2+L_A^2\theta\gamma\sigma_1^2.
    \end{align*}
\end{corollary}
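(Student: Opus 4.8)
The plan is to simply chain the preceding theorem with the smoothness of the subproblem, so the statement follows as an immediate corollary. Let $\hat{x} \defeq \arg\min_{x\in\R}\A(x)$; since we have already shown $\nabla^2\A(x)\succ0$, the function $\A$ is strongly convex, so $\hat{x}$ is well-defined (exists and is unique) and satisfies the first-order optimality condition $\nA(\hat{x})=0$.

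First I would record the pointwise gradient-to-distance bound. Using $\nA(\hat{x})=0$ together with the $L_A$-smoothness of $\A$ established above ($L_A=\frac{1}{\theta}+L_1$),
\begin{align*}
\|\nA(x_T)\| = \|\nA(x_T)-\nA(\hat{x})\| \leq L_A\|x_T-\hat{x}\|.
\end{align*}
Squaring and taking the expectation conditional on $s_k$ reproduces the smoothness observation displayed just before the statement, namely $\E[\|\nA(x_T)\|^2|s_k]\leq L_A^2\,\E[\|x_T-\hat{x}\|^2|s_k]$.

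It then remains to substitute the bound from the theorem into this inequality. Conditioning on $s_k$ fixes $s_k$ and hence the objective $\A$ and its minimizer $\hat{x}$, so the theorem's estimate on $\E[\|x_T-\hat{x}\|^2|s_k]$ applies directly. Plugging it in and distributing the factor $L_A^2$ across the two resulting terms yields
\begin{align*}
\E[\|\nA(x_T)\|^2|s_k] \leq L_A^2\left(1-\frac{\gamma}{\theta}\right)^T\|x_0-\hat{x}\|^2 + L_A^2\theta\gamma\sigma_1^2,
\end{align*}
which is exactly the claim.

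There is no real obstacle here: the result is a direct corollary, with the bulk of the work already carried out in the theorem and the smoothness estimate. The only points requiring care are essentially bookkeeping — verifying that the smoothness inequality is applied pointwise to the random iterate $x_T$ before any expectation is taken, and that conditioning on $s_k$ is precisely what renders the objective $\A$ (and therefore $\hat{x}$ and the constant $L_A$) deterministic, so that both the theorem's bound and the factor $L_A^2$ may legitimately be pulled through the conditional expectation.
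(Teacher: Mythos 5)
Your proposal is correct and follows exactly the paper's route: the paper derives this corollary by combining the preceding \texttt{SGD} convergence theorem with the smoothness observation $\E[\|\nabla\bar{A}_\theta^k(x_T)\|^2|s_k]\leq L_A^2\,\E[\|x_T-\arg\min_x \bar{A}_\theta^k(x)\|^2|s_k]$, which is precisely your chaining of $\nabla\bar{A}_\theta^k(\hat{x})=0$ with $L_A$-smoothness. Your added care about conditioning on $s_k$ making $\bar{A}_\theta^k$ and its minimizer deterministic is a correct (if implicit in the paper) bookkeeping point.
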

Our interest is to choose such step that it would be possible to avoid getting stuck in the neighborhood of a subproblem solution. According to \citep{opt_sgd}, we note the following.
\begin{theorem}
    Under Assumptions \ref{ass:1} and \ref{ass:2}, consider the problem (\ref{th2:prob}) to be solved using \texttt{SGD}. There exists decreasing step sizes $\gamma_t\leq\nicefrac{1}{2L_A}$ such that for $T$ iterations we obtain:
    \begin{align*}
        \E[\|\nA(x_{T})\|^2|s_k]\leq& 64\theta L_A^3\left\| x_0-\arg\min_{x \in \mathbb{R}^d} \bar{A}_\theta^k(x) \right\|^2\exp\left\{ -\frac{T}{4L_A\theta} \right\} + \frac{36L_A^2\sigma_1^2\theta}{T}.
    \end{align*}
\end{theorem}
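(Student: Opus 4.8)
The plan is to obtain a last-iterate convergence bound for \texttt{SGD} on the subproblem (\ref{th2:prob}) and then pass to the gradient norm by smoothness, exactly as in the two preceding results but with a decaying schedule. First I would record the three quantities that enter any \texttt{SGD} estimate for (\ref{th2:prob}): the subproblem is $\mu_A$-strongly convex with $\mu_A=\tfrac1\theta$ and $L_A$-smooth with $L_A=\tfrac1\theta+L_1$ (both shown just above), and its stochastic gradient $\nabla\A(x,\xi)=s_k+\tfrac1\theta(x-x_g^k)+\nabla r_1(x,\xi)$ differs from $\nA(x)$ only through the oracle $\nabla r_1(\cdot,\xi)$; hence, conditionally on $s_k$, it is unbiased with variance at most $\sigma_1^2$ by Assumption \ref{ass:2}. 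Write $x_A^\star$ for $\arg\min_{x\in\R}\A(x)$, at which $\nA(x_A^\star)=0$.

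Next I would set up the one-step recursion. Expanding $\|x_{t+1}-x_A^\star\|^2$ for the update $x_{t+1}=x_t-\gamma_t\nabla\A(x_t,\xi)$, taking conditional expectation, and using strong convexity together with $\|\nA(x_t)\|^2\le 2L_A(\A(x_t)-\A(x_A^\star))$, the cross term is controlled whenever $\gamma_t\le\tfrac1{2L_A}$, leaving
\begin{align*}
\E\|x_{t+1}-x_A^\star\|^2 \le \Big(1-\tfrac{\gamma_t}{\theta}\Big)\E\|x_t-x_A^\star\|^2 - \gamma_t\,\E\big[\A(x_t)-\A(x_A^\star)\big] + \gamma_t^2\sigma_1^2 .
\end{align*}
This is the same inequality that produced the constant-step estimate quoted above (there the middle term is simply dropped).

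The heart of the argument is the choice of the schedule $\{\gamma_t\}$, which I would take from \citep{opt_sgd} in its switching form: a constant phase $\gamma_t=\tfrac1{4L_A}\le\tfrac1{2L_A}$ for the first $\Theta(L_A\theta)$ iterations, followed by a decay $\gamma_t=\Theta(\theta/t)$ that keeps $\gamma_t\le\tfrac1{2L_A}$. Unrolling the recursion with this schedule, the constant phase contracts $\|x_0-x_A^\star\|^2$ geometrically at rate $(1-\tfrac1{4L_A\theta})$, giving the factor $\exp\{-T/(4L_A\theta)\}$, while the $\Theta(\theta/t)$ tail makes the accumulated noise $\sum_t(\prod_{s>t}(1-\gamma_s/\theta))\gamma_t^2\sigma_1^2$ collapse to a $\Theta(\sigma_1^2\theta/T)$ floor; the negative function-value terms are spent to sharpen the transient into the claimed condition-number prefactor. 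I expect this telescoping across the two phases to be the main obstacle, since it is exactly where the numerical constants ($64$, $36$, and the $\tfrac1{4L_A\theta}$ rate) are pinned down; this is the content of the cited lemma, which I would invoke rather than re-derive.

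Finally I would convert to the gradient norm. Since $\nA(x_A^\star)=0$ and $\A$ is $L_A$-smooth, $\|\nA(x_T)\|=\|\nA(x_T)-\nA(x_A^\star)\|\le L_A\|x_T-x_A^\star\|$, so $\E[\|\nA(x_T)\|^2\mid s_k]\le L_A^2\,\E[\|x_T-x_A^\star\|^2\mid s_k]$. Substituting the two-phase distance bound and $\mu_A=\tfrac1\theta$, then collecting constants, yields the stated inequality. Equivalently, one may apply $\|\nA(x_T)\|^2\le 2L_A(\A(x_T)-\A(x_A^\star))$ to a function-value version of the same bound; both routes produce the $\exp\{-T/(4L_A\theta)\}$ transient and the $\Theta(L_A^2\sigma_1^2\theta/T)$ noise floor.
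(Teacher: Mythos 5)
Your proposal follows essentially the same route as the paper, which offers no self-contained proof but simply verifies that the subproblem is $\frac{1}{\theta}$-strongly convex and $L_A$-smooth with an unbiased bounded-variance oracle, invokes the decreasing-stepsize result of \citep{opt_sgd} (whence the constants $64$, $36$, and the rate $\nicefrac{T}{4L_A\theta}$), and converts to the gradient norm via $\|\nabla\bar{A}_{\theta}^k(x_T)\|^2\leq L_A^2\|x_T-\arg\min_{x\in\mathbb{R}^d}\bar{A}_{\theta}^k(x)\|^2$. Your added one-step recursion and the explicit two-phase (constant, then $\Theta(\theta/t)$) schedule are a correct unpacking of the cited lemma, so the argument is sound and matches the paper's.
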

In this way, we can achieve optimal convergence rate of \texttt{SGD}. However, we would like to enjoy linear convergence. The idea is to use variance reduction methods.

\subsection{SVRG Approach}
Another approach to solving the subproblem is to use variance reduction methods, for example, \texttt{SVRG}. Let us take $x_g^k$ as a starting point and look at the first epoch. It is known from \citep{http://proceedings.mlr.press/v108/gorbunov20a/gorbunov20a-supp.pdf}:
\begin{equation*}
    \E\left[\A\left(\frac{1}{J}\sum_{j=1}^Jx_j\right) - \A(x_*)\right] \leq \frac{2(\mu^{-1}+2J\gamma^2L_A)}{J\gamma(1-2\gamma L_A)}\E\left[\A(x_g^k) - \A(x_*)\right],
\end{equation*}
where $J$ is an epoch size. 
$x=\frac{1}{J}\sum_{j=1}^Jx_j$ is the next point after the algorithm finishes current epoch. We can rewrite for $T$-th epoch:
\begin{equation*}
    \E\left[\A\left(\widehat{x}_T\right) - \A(x_*)\right] \leq \left(\frac{2(\mu^{-1}+2J\gamma^2L_A)}{J\gamma(1-2\gamma L_A)}\right)^T\E\left[\A(x_g^k) - \A(x_*)\right].
\end{equation*}
Then we write strong-convexity property:
\begin{equation*}
    \E\left[\A(x_g^k) - \A(x_*)\right] \leq \frac{1}{2\mu}\E[\lVert\nA(x_g^k)\rVert^2].
\end{equation*}
Using $L_A$-smoothness of $\A$, we obtain
\begin{align*}
    \E\left[\left\lVert\nA\left(\widehat{x}_T\right)\right\rVert^2\right] \leq& 2L\E\left[\A\left(\widehat{x}_T\right) - \A(x_*)\right] 
    \\
    \leq& \frac{L_A}{\mu}\left(\frac{2(\mu^{-1}+2J\gamma^2L_A)}{J\gamma(1-2\gamma L_A)}\right)^T\E[\lVert\nA(x_g^k)\rVert^2].
\end{align*}
Thus, for a suitable combination of epoch size and step, we expect to obtain linear convergence of the subproblem, which is consistent with the experimental results.

\subsection{Loopless Approach} 

In this section, we filling in some gaps in subproblem analysis. Decision criterion for $\texttt{ASEG}$ is (\ref{st_aux:grad_app}). Let us assume that we obtained:
\begin{equation}
    \label{criteria_eq:1}
    ||\nabla A_{\theta}^k(x_T)||^2=\mathcal{O}\left(\frac{1}{T^{\alpha}}\right), \quad \alpha>0.
\end{equation}
We use this to obtain a lower bound on $T$. The important question is whether it is possible to take fewer steps so that the criterion necessary for the convergence of the whole algorithm is met. In other words, we want to check whether the loopless version of \texttt{ASEG} turns out to be better than the usual one, as in the case of \texttt{SVRG} and \texttt{KATYUSHA} \citep{kovalev2020don}.

We are interested in comparing deterministic and random choices of $T$.
Let $\xi$ be a random variable describing the number of steps. $\xi$ takes positive discrete values. Let $A$ be the number of steps for the subproblem of \texttt{ASEG}, defined deterministically and satisfying \eqref{st_aux:grad_app}. 
Let us assume that 
\begin{equation}
    \label{assuming_1:eq0}
    \frac{1}{\xi^{\alpha}}\leq \frac{1}{A^{\alpha}}.
\end{equation}
Then from \eqref{criteria_eq:1} we know that
\begin{equation*}
    \label{gradient_ineq_1:eq0}
    \|\nabla A_{\theta}^k(x_\xi)\|^2\leq \|\nabla A_{\theta}^k(x_A)\|^2,
\end{equation*}
where $x_{\xi}$ is the point obtained through $\xi$ solver steps. Obviously, the reverse is also true. In other words, the condition \eqref{assuming_1:eq0} is the criterion for $\xi$ to solve the initial subproblem. At the same time, we want to minimize $\E\xi$ to take as few steps as possible.
\begin{proposition} \label{prop:1}
	$\E[\xi] < A \Rightarrow \E[\frac{1}{\xi^{\alpha}}] > \frac{1}{A^{\alpha}}$.
\end{proposition}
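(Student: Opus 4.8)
The statement is a direct consequence of Jensen's inequality combined with the strict monotonicity of the power map, so the plan is short. First I would observe that the function $f(t) = t^{-\alpha}$ is strictly convex on $(0,\infty)$ for every $\alpha > 0$, since $f''(t) = \alpha(\alpha+1)t^{-\alpha-2} > 0$ there. Because $\xi$ takes only positive values and $\E[\xi] < A < \infty$ by hypothesis, the expectation $\E[\xi]$ is a finite, strictly positive number, so all quantities below are well defined.

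The core step is to apply Jensen's inequality to the convex function $f$ and the random variable $\xi$:
\begin{equation*}
\E\left[\frac{1}{\xi^{\alpha}}\right] = \E[f(\xi)] \geq f(\E[\xi]) = \frac{1}{(\E[\xi])^{\alpha}}.
\end{equation*}
This already lower-bounds the left-hand side by $1/(\E[\xi])^{\alpha}$; note that only the non-strict form of Jensen is needed here, so I do not have to worry about whether $\xi$ is degenerate.

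The strict inequality then comes entirely from the second step. Since $t \mapsto t^{-\alpha}$ is strictly decreasing on $(0,\infty)$ and the hypothesis gives $0 < \E[\xi] < A$, we obtain
\begin{equation*}
\frac{1}{(\E[\xi])^{\alpha}} > \frac{1}{A^{\alpha}}.
\end{equation*}
Chaining the two displays yields $\E[1/\xi^{\alpha}] > 1/A^{\alpha}$, which is exactly the claim.

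I do not anticipate any genuine obstacle: the only points requiring minor care are (i) confirming that the direction of convexity of $f$ matches the direction of Jensen's inequality, and (ii) correctly locating the source of strictness. The strictness is supplied by the monotonicity step together with the strict hypothesis $\E[\xi] < A$, rather than by Jensen's inequality itself; consequently there is no need to invoke strict convexity or to exclude the case of a deterministic $\xi$.
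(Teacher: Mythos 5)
Your proof is correct and follows essentially the same route as the paper: both apply (non-strict) Jensen's inequality to the convex map $t \mapsto t^{-\alpha}$ to get $\E[1/\xi^{\alpha}] \geq 1/(\E[\xi])^{\alpha}$, and then obtain strictness from the hypothesis $\E[\xi] < A$ via the strict monotonicity of $t^{-\alpha}$. Your write-up is in fact slightly more careful than the paper's, since you explicitly identify the monotonicity step (not Jensen) as the source of the strict inequality.
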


\begin{proof}
Let $p_i$ be the probability to choose $\xi=i$. Then we have
\begin{align*}
    \E\left[\frac{1}{\xi^{\alpha}}\right]=\sum\limits_{i=1}^Tp_i\frac{1}{i^{\alpha}}.
\end{align*}
Since $\alpha>0$, $\phi(i)=\nicefrac{1}{i^{\alpha}}$ is a convex function. Using the Jensen's inequality, we obtain 
\begin{align*}\E\left[\frac{1}{\xi^{\alpha}}\right] \geq \frac{1}{(\E\xi)^{\alpha}}> \frac{1}{A^{\alpha}}.\end{align*}
\end{proof}
Thus, in the worst case, if the expectation of $\xi$ is less than the deterministic number of iterations, the criterion cannot be met and there is no probabilistic method of choosing the number of iterations that is better than the deterministic one.

It is worth noting that the same can be said about a pair of \texttt{SVRG} and \texttt{L-SVRG}. It is obvious that \texttt{SVRG} could be reduced to \texttt{L-SVRG} if the number of iterations before updating the gradient is considered as a random variable. Thus, the epoch $J$ is from the geometric distribution and one can compare which algorithm gives a more accurate solution after performing the inner loop. For \texttt{SVRG}-rate we have \citep{Stoch_rew}:
\begin{align}\label{eq:svrg_conv}
    \E&\left[\A(x_{J(t+1)}) - \A(x_*)\right] \leq
    \left( \frac{1}{\eta \gamma (1 - 2L\gamma)J} + \frac{2L\gamma}{1 - 2L\gamma}\right)\E\left[\A(x_{Jt}) - \A(x_*)\right],
\end{align}
where $\gamma$ is a step and $\eta > 0$ - some constant.
From the Proposition \ref{prop:1} and (\ref{eq:svrg_conv}) we obtain:

\begin{equation*}
\left( \frac{1}{\eta \gamma (1 - 2L\gamma)} \E\left[\frac{1}{J}\right] + \frac{2L\gamma}{1 - 2L\gamma}\right) \leq \left( \frac{1}{\eta \gamma (1 - 2L\gamma)J} + \frac{2L\gamma}{1 - 2L\gamma}\right).
\end{equation*}
Thus, in the worst case, loopless versions converges no faster than usual ones.

\section{Numerical Experiments}
To support theoretical analysis and insights, we solve both logistic regression problem: 
\begin{equation*}
    r(x) = \frac{1}{M}\sum_{m=1}^M \frac{1}{n}\sum_{j=1}^n \ln\left(1+e^{-b^m_j \left\langle x, a^m_j \right\rangle}\right) + \lambda\|x\|^2,
\end{equation*}
and quadratic regression problem:
\begin{equation*}
    r(x) = \frac{1}{M}\sum_{m=1}^M \frac{1}{n}\sum_{j=1}^n \left(\langle x, a^m_j\rangle -b^m_j\right)^2+ \lambda\|x\|^2.
\end{equation*}
Here $x$ refers to parameters of the model, $a_j^m$ is the feature description and $b_j^m$ is the corresponding label. To avoid overfitting, we use ${l_2}$-regularization. The penalty parameter $\lambda$ is set to $\nicefrac{L}{100}$, where $L$ is a smoothness constant (\ref{def:smoothness}). We use \texttt{SVRG} and \texttt{SARAH} as the subtask solvers in Algorithm \ref{st_ae:alg}. In all tests, the original datasets are merged into batches $\left\{ r_b\right\}_{i=1}^{M+N}$.
A function $r_1(x)$ is created from part of the acquired ones:
\begin{equation*}
    r_1(x) = \frac{1}{N}\sum_{b=1}^{N} r_b(x),\quad N < M.
\end{equation*}
Due to the Hessian similarity, we have the following expression for the smoothness constant
\begin{equation*}
    \lVert \nabla^2 r(x) -  \nabla^2 r_1(x) \rVert \leq \delta,
\end{equation*}
where
\begin{equation*}
    r(x) = \frac{1}{M}\sum_{i=1}^M r_i(x).
\end{equation*}
Then for the quadratic task we have
\begin{equation*}
    \delta = \left\lVert \frac{1}{N}\sum_{i=1}^N A_i - \frac{1}{M}\sum_{j=1}^M A_j \right\rVert.
\end{equation*}
For the logistic regression, $\delta$ is estimated by enumerating 100 points in the neighborhood of the solution. In both cases, obtained value is multiplied by $\nicefrac{3}{2}$. Experiments are performed on three datasets: \texttt{Mushrooms}, \texttt{W8A, A9A}, \citep{chang2011libsvm}.

$\bullet$ We investigate how the size of the batches affects the convergence of \texttt{ASEG} on tasks with different parameters. Figure \ref{batching} shows the dependence of untuned \texttt{ASEG} convergence on the batch size. In the first experiment the method on $50$ nodes solves the problem with $\mu=0.105$ and $\delta=1.45$. According to Corollary \ref{st_ae:cor_2} one should choose $B>72\cdot(\nicefrac{\delta}{\mu})^{\nicefrac{3}{2}} \approx 3695$ to obtain optimal complexity. That is, for any size of the batch, there is non-optimal rate. In the second experiment, the method on $200$ nodes solves the problem with $\mu=0.06$, $\delta=0.07$. In this case, \texttt{ASEG} has guaranteed optimal complexity for $B>57$ and converges to rather small values of the criterion.
\begin{figure}[h!]
    \begin{center}
    \includegraphics[width = 0.7\textwidth]{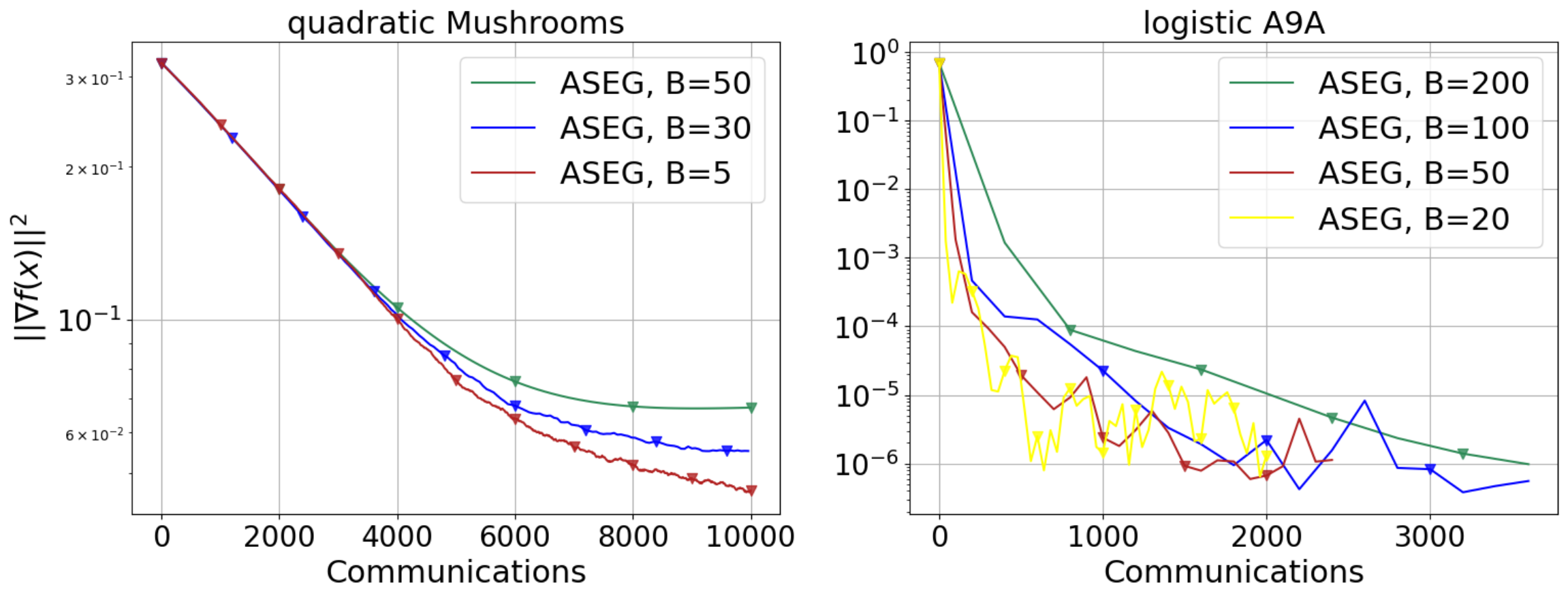}
    \end{center}
    \caption{\texttt{ASEG} with different batch sizes}
    \label{batching}
\end{figure}

$\bullet$ We compare \texttt{ASEG} to \texttt{AccSVRS} (Figure \ref{aseg_vs_svrs_mush} and Figure \ref{aseg_vs_svrs_a9a}). 50 nodes are available for \texttt{Mushrooms} and 200 nodes are available for \texttt{A9A}. If the ratio between $\mu$, $\delta$, and $B$ yields the optimal complexity according to Corollary \ref{st_ae:cor_2}, then due to similarity there is an optimistic estimate on the batching noise provides a significant gain over \texttt{AccSVRS}. Otherwise, it is worth using full batching and obtaining a less appreciable gain.
\begin{figure}[h!]
    \begin{center}
    \includegraphics[width = 0.7\textwidth]{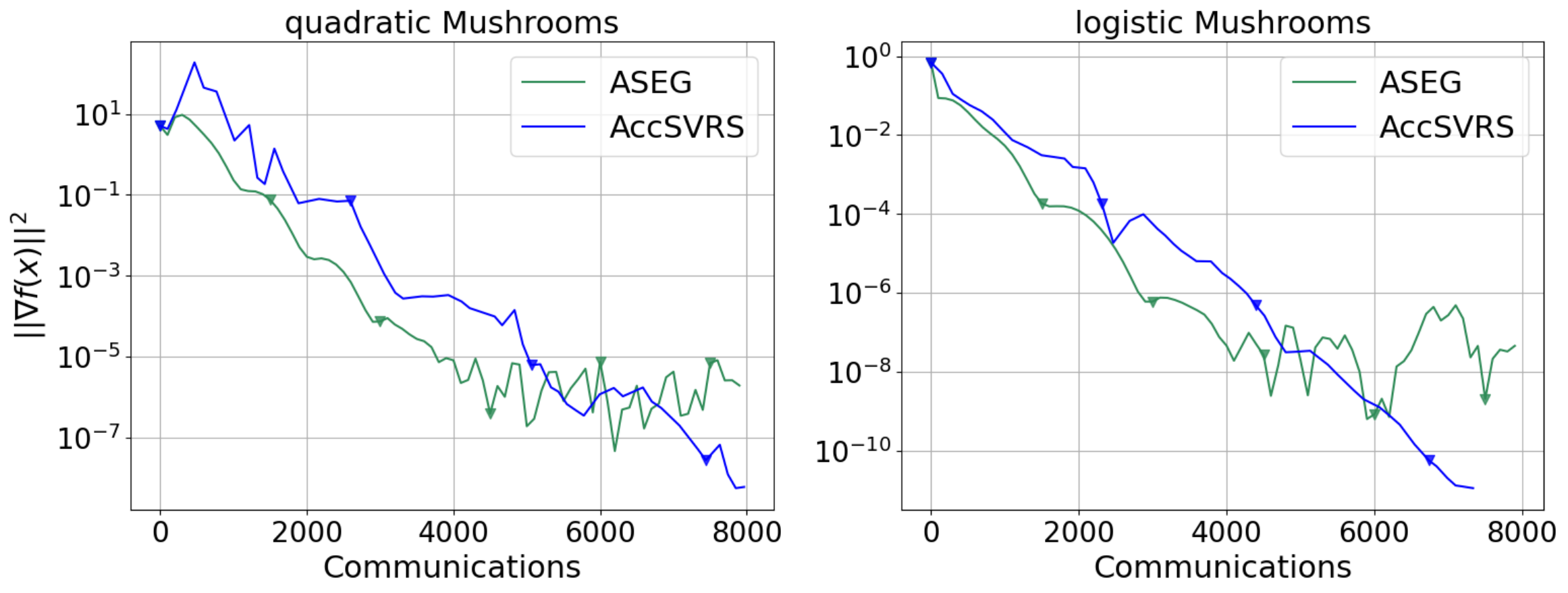}
    \end{center}
    \caption{\texttt{ASEG} vs. \texttt{AccSVRS} on data with $\delta=10.15$ for quadratic task and $\delta=1.45$ for logistic one}
    \label{aseg_vs_svrs_mush}
\end{figure}
\begin{figure}[h!]
    \begin{center}
    \includegraphics[width = 0.7\textwidth]{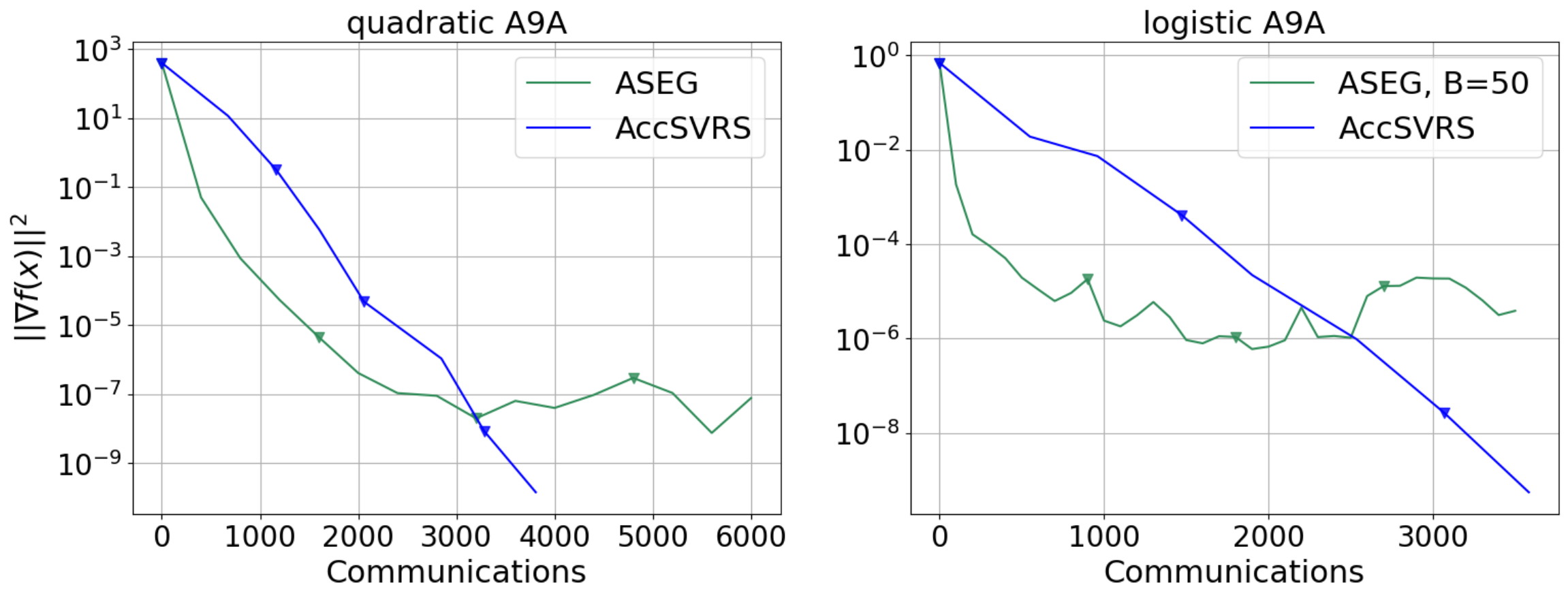}
    \end{center}
    \caption{\texttt{ASEG} vs. \texttt{AccSVRS} on data with $\delta=0.064$ for quadratic task and $\delta=0.061$ for logistic one}
    \label{aseg_vs_svrs_a9a}
\end{figure}

$\bullet$ In \texttt{ASEG} stability experiments (Figure \ref{fig:stability_quadratic_svrg}, Figure \ref{fig:stability_logistic_svrg}, Figure \ref{fig:stability_quadratic_sarah}, Figure \ref{fig:stability_logistic_sarah}) it is tested how white noise in each $\nabla r_m(x)$ affects the convergence. It can be seen that increasing the randomness does not worsen the convergence. This can be explained by the fact that the convergence is mainly determined by accuracy of chosen solver.

\begin{figure}[h!] 
   \begin{subfigure}{0.32\textwidth}
       \centering
       \includegraphics[width=\linewidth]{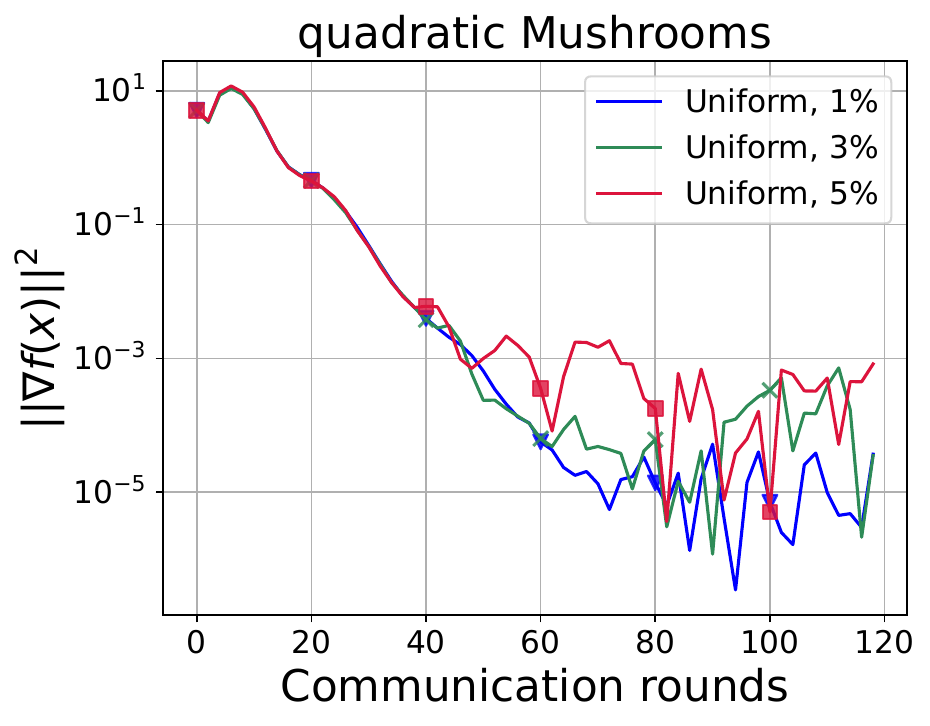}
   \end{subfigure}
   \begin{subfigure}{0.32\textwidth}
        \centering
       \includegraphics[width=\linewidth]{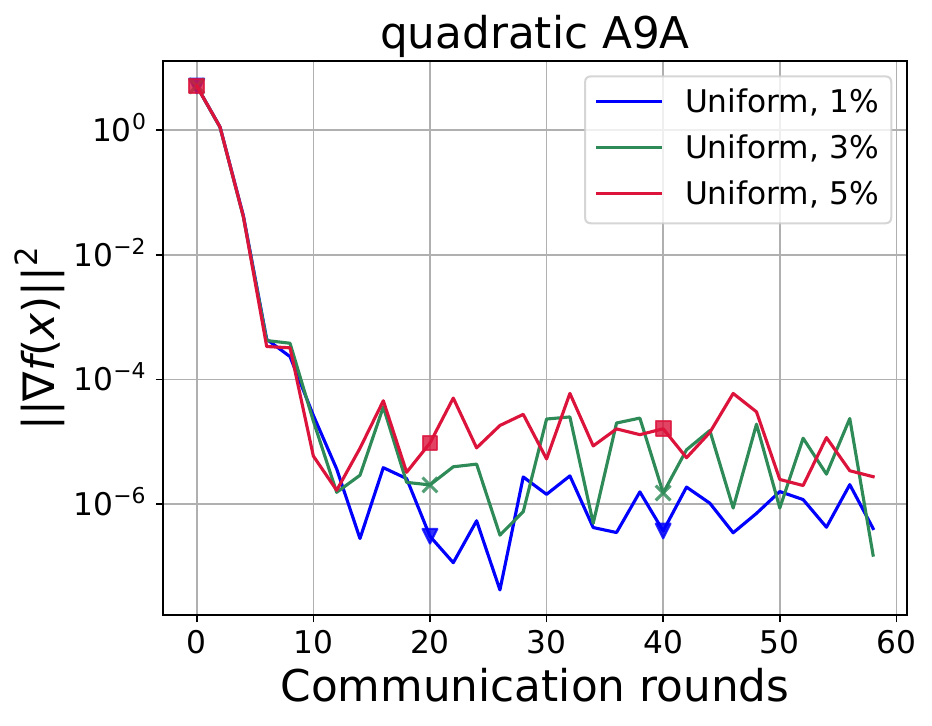}
   \end{subfigure}
   \begin{subfigure}{0.32\textwidth}
   \centering
       \includegraphics[width=\linewidth]{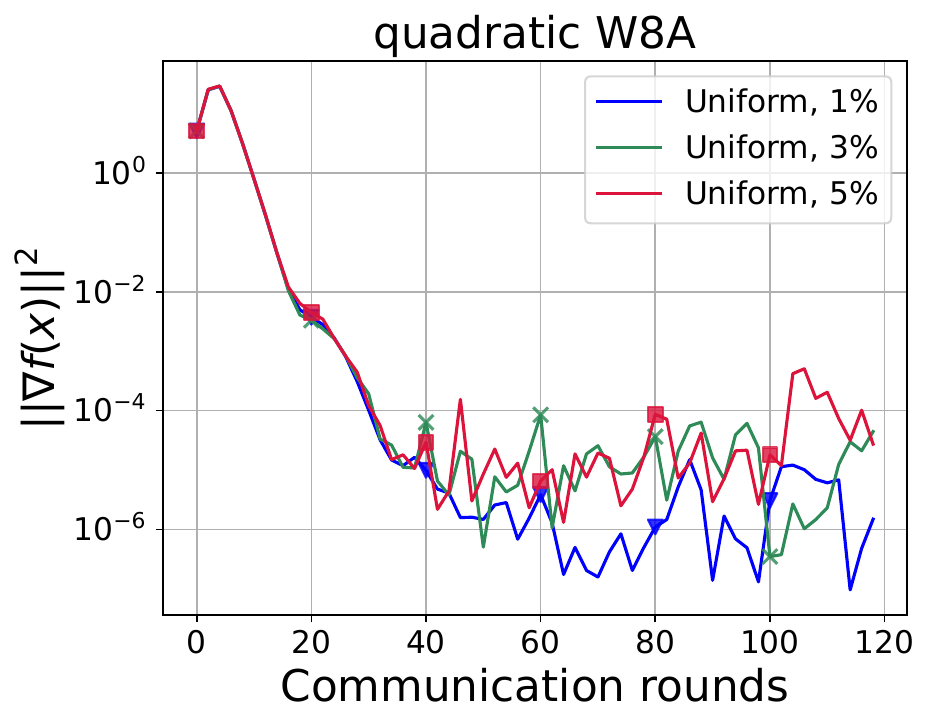}
   \end{subfigure}
   \caption{Stability of \texttt{ASEG} with \texttt{SVRG}-solver and uniform noise. The comparison is made solving quadratic problem on $M=200$ nodes}
   \label{fig:stability_quadratic_svrg}
\end{figure}

\begin{figure}[h!] 
   \begin{subfigure}{0.32\textwidth}
       \centering
       \includegraphics[width=\linewidth]{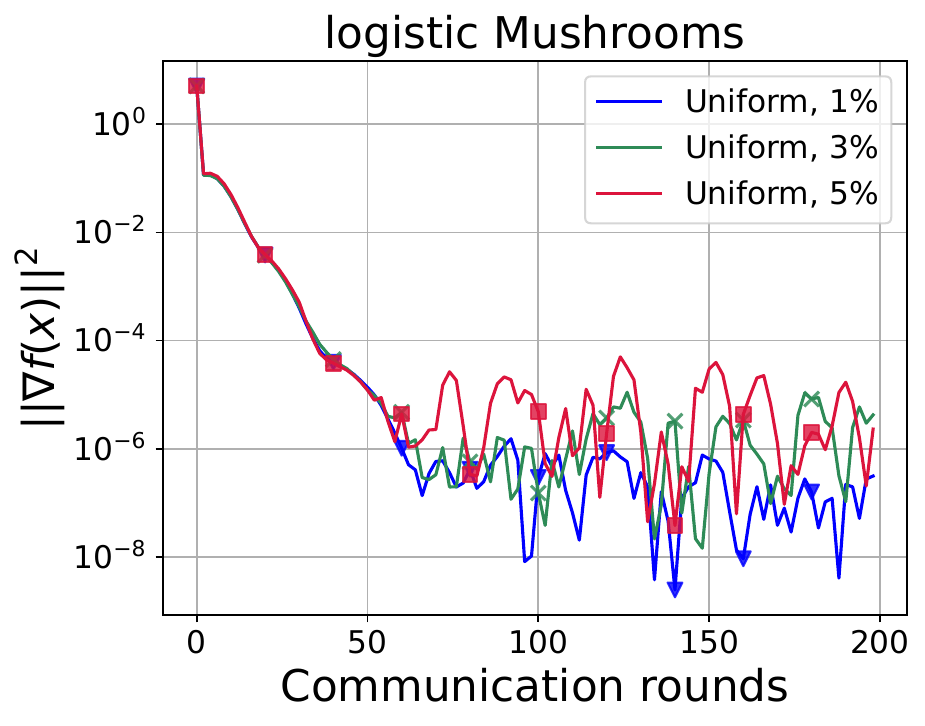}
   \end{subfigure}
   \begin{subfigure}{0.32\textwidth}
        \centering
       \includegraphics[width=\linewidth]{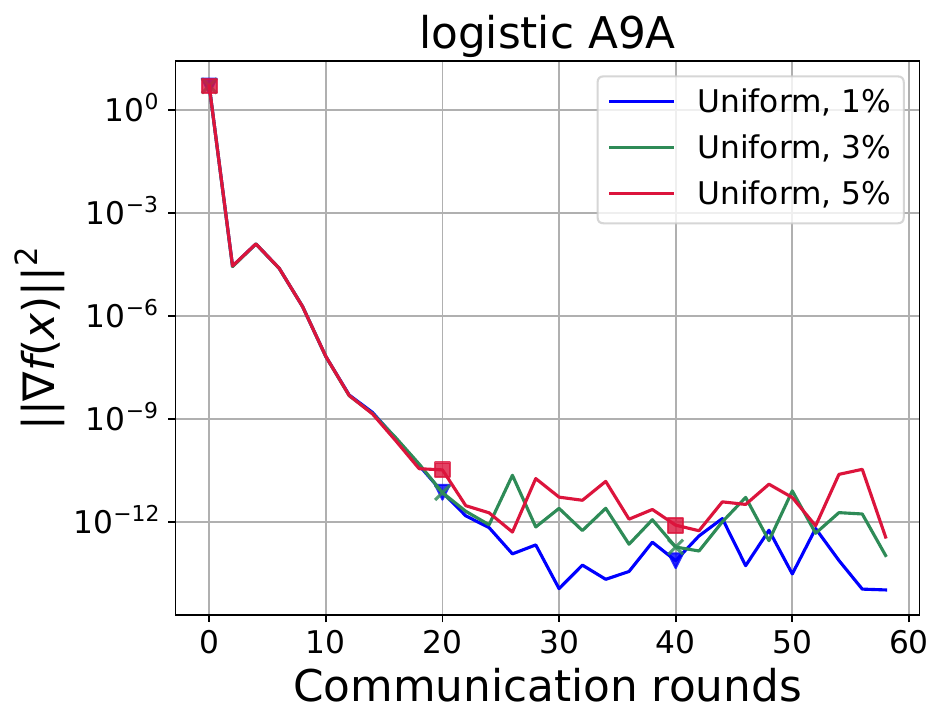}
   \end{subfigure}
   \begin{subfigure}{0.32\textwidth}
   \centering
       \includegraphics[width=\linewidth]{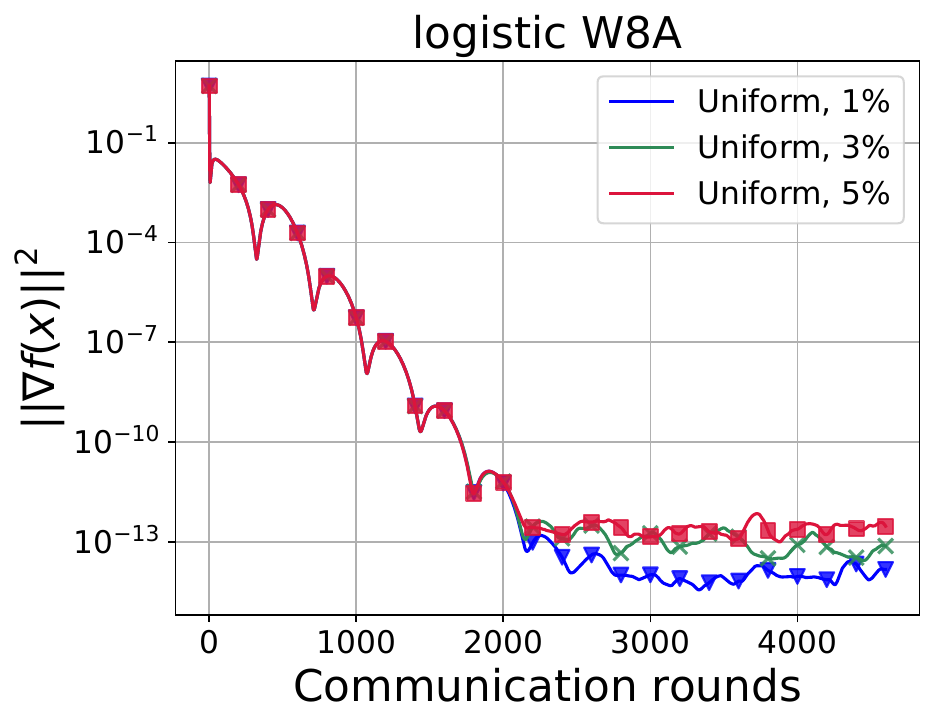}
   \end{subfigure}
   \caption{Stability of \texttt{ASEG} with \texttt{SVRG}-solver and uniform noise. The comparison is made solving logistic problem on $M=200$ nodes}
   \label{fig:stability_logistic_svrg}
\end{figure}

\begin{figure}[h!] 
   \begin{subfigure}{0.32\textwidth}
       \centering
       \includegraphics[width=\linewidth]{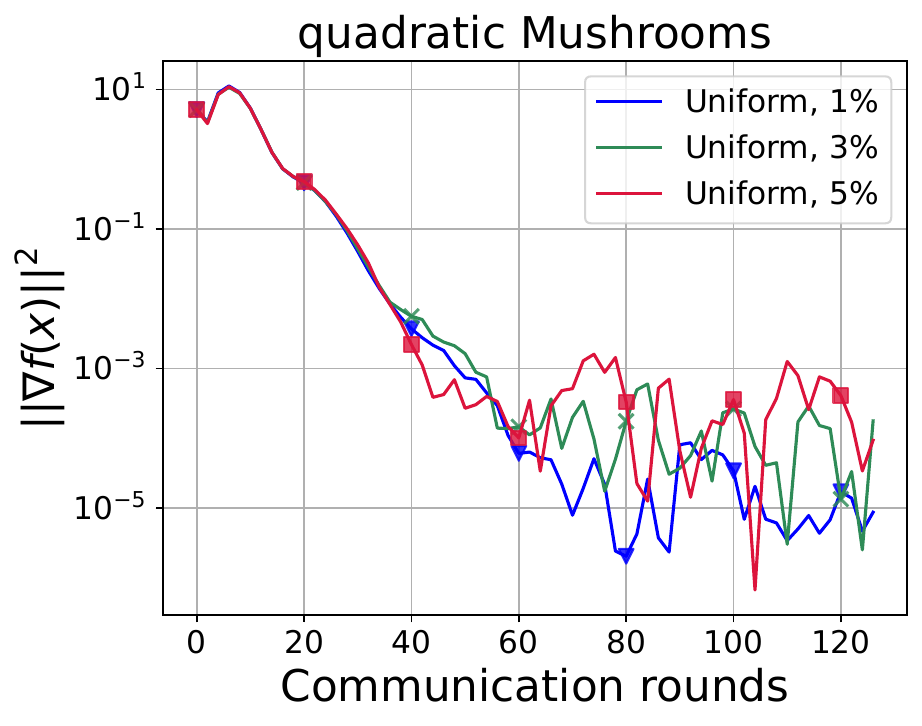}
   \end{subfigure}
   \begin{subfigure}{0.32\textwidth}
        \centering
       \includegraphics[width=\linewidth]{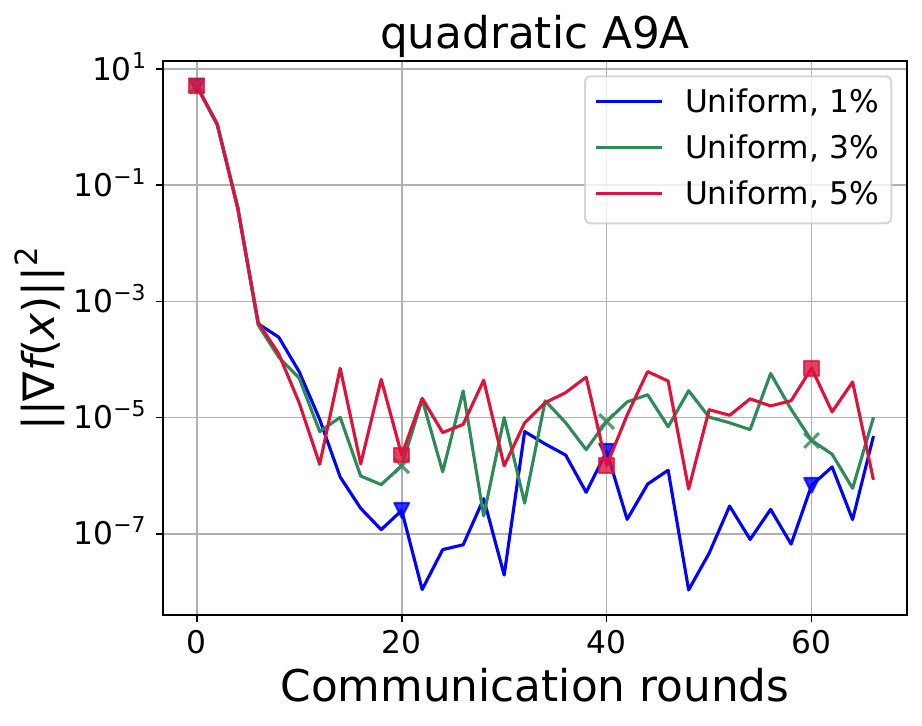}
   \end{subfigure}
   \begin{subfigure}{0.32\textwidth}
   \centering
       \includegraphics[width=\linewidth]{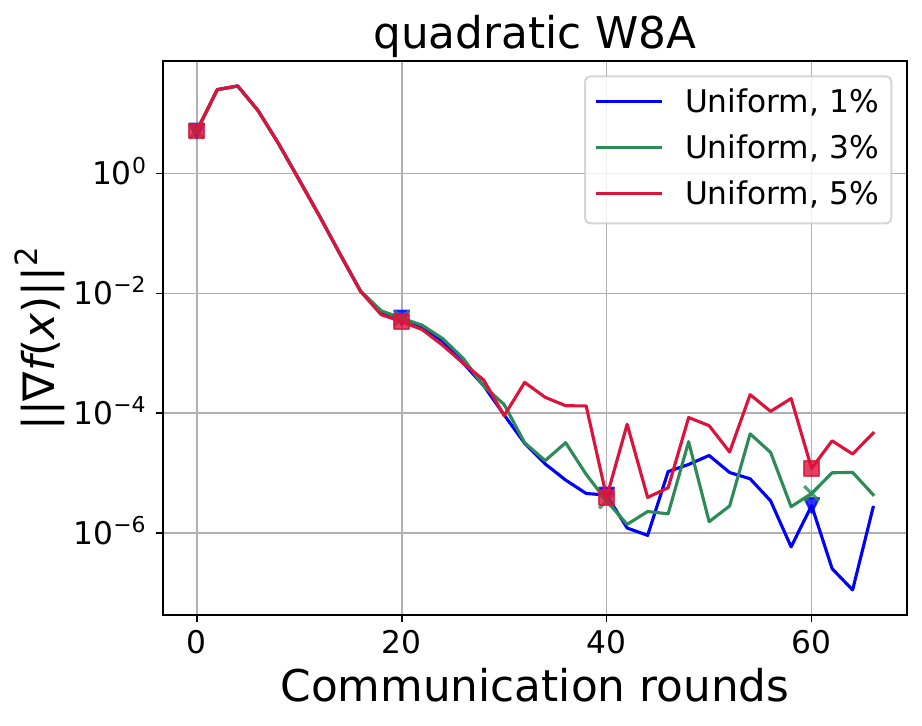}
   \end{subfigure}
   \caption{Stability of \texttt{ASEG} with \texttt{SARAH}-solver and uniform noise. The comparison is made solving quadratic problem on $M=200$ nodes}
   \label{fig:stability_quadratic_sarah}
\end{figure}

\begin{figure}[h!] 
   \begin{subfigure}{0.32\textwidth}
       \centering
       \includegraphics[width=\linewidth]{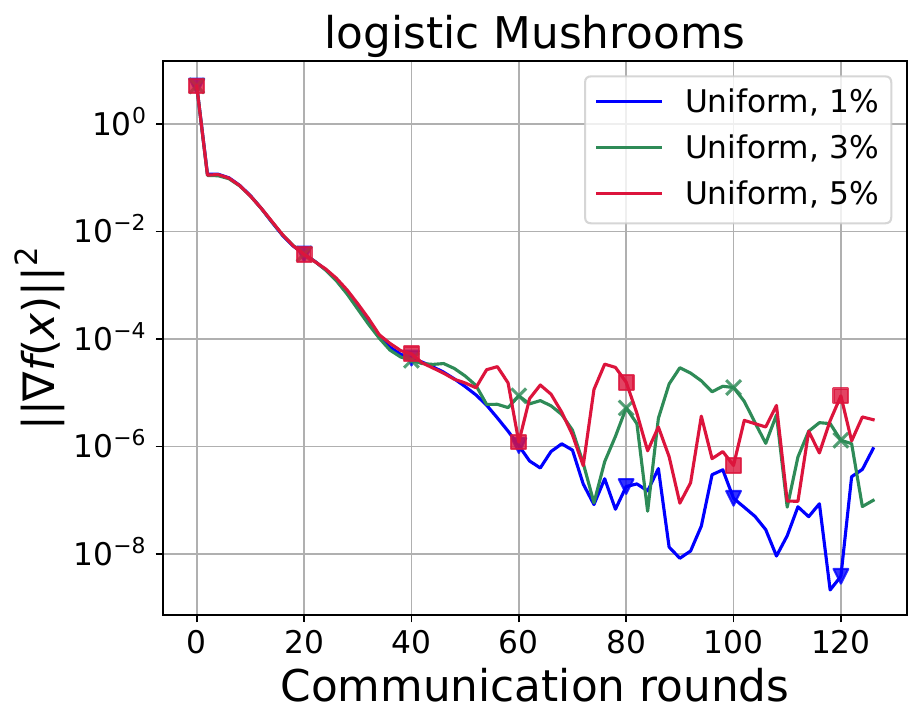}
   \end{subfigure}
   \begin{subfigure}{0.32\textwidth}
        \centering
       \includegraphics[width=\linewidth]{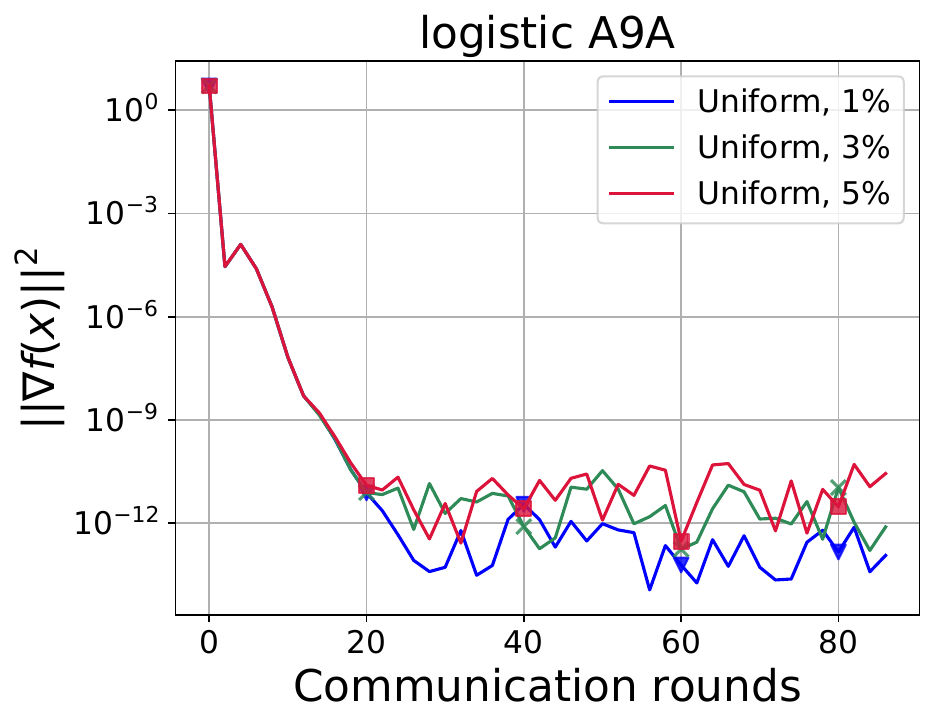}
   \end{subfigure}
   \begin{subfigure}{0.32\textwidth}
   \centering
       \includegraphics[width=\linewidth]{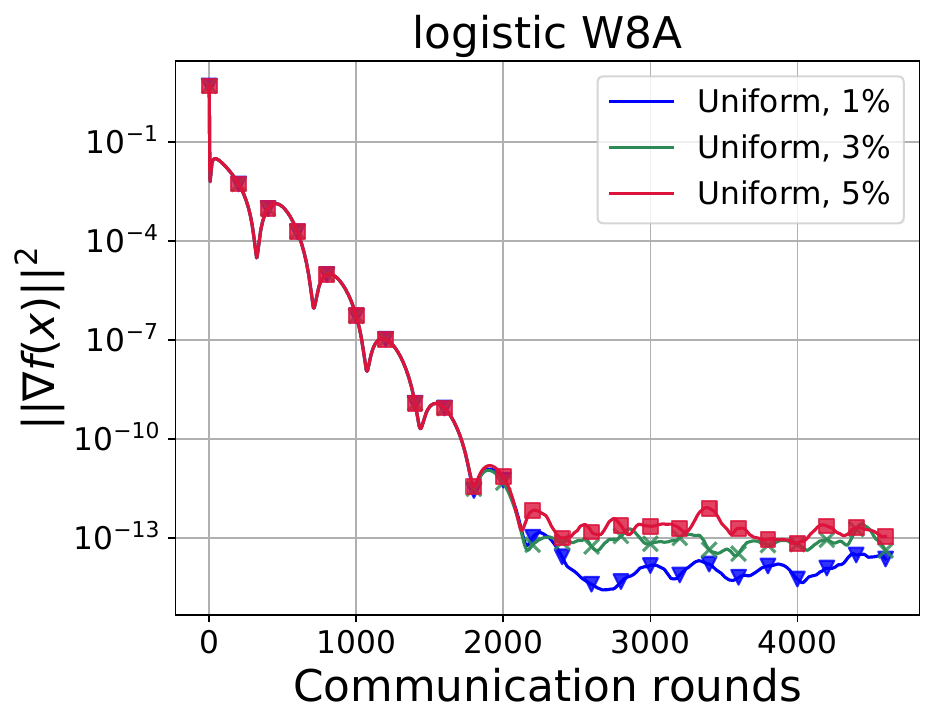}
   \end{subfigure}
   \caption{Stability of \texttt{ASEG} with \texttt{SARAH}-solver and uniform noise. The comparison is made solving logistic problem on $M=200$ nodes}
   \label{fig:stability_logistic_sarah}
\end{figure}

$\bullet$ \texttt{ASEG} experiments test how changing the epoch size changes the \texttt{ASEG} and subproblem (Line \ref{st_ae:line:2} of Algorithm \ref{st_ae:alg}) convergence with \texttt{SVRG} and \texttt{SARAH} solvers (Figure \ref{fig:quadratic_svrg_aseg}, Figure \ref{fig:logistic_svrg_aseg}, Figure \ref{fig:quadratic_svrg_subtask}, Figure \ref{fig:logistic_svrg_subtask}, Figure \ref{fig:quadratic_sarah_aseg}, Figure \ref{fig:logistic_sarah_aseg}, Figure \ref{fig:quadratic_sarah_subtask}, Figure \ref{fig:logistic_sarah_subtask}).

\begin{figure}[h!] 
   \begin{subfigure}{0.32\textwidth}
       \centering
       \includegraphics[width=\linewidth]{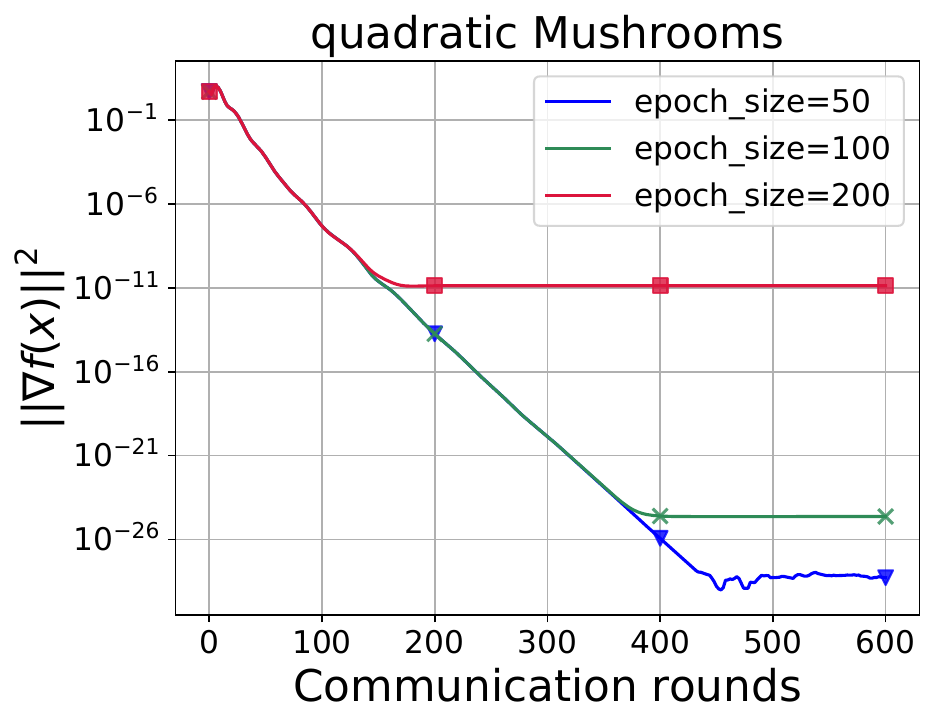}
   \end{subfigure}
   \begin{subfigure}{0.32\textwidth}
        \centering
       \includegraphics[width=\linewidth]{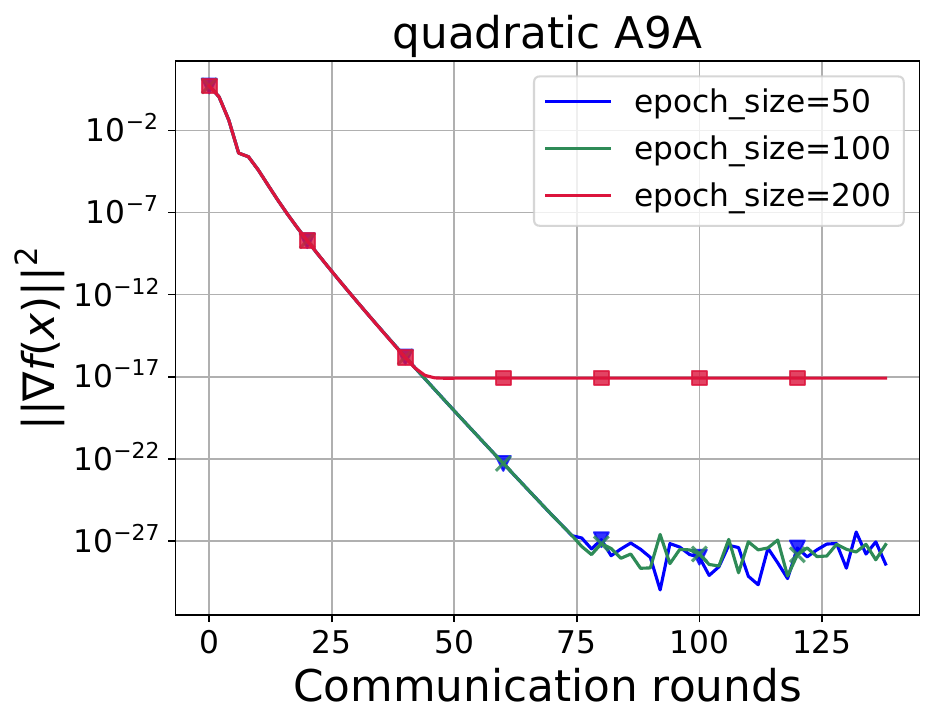}
   \end{subfigure}
   \begin{subfigure}{0.32\textwidth}
   \centering
       \includegraphics[width=\linewidth]{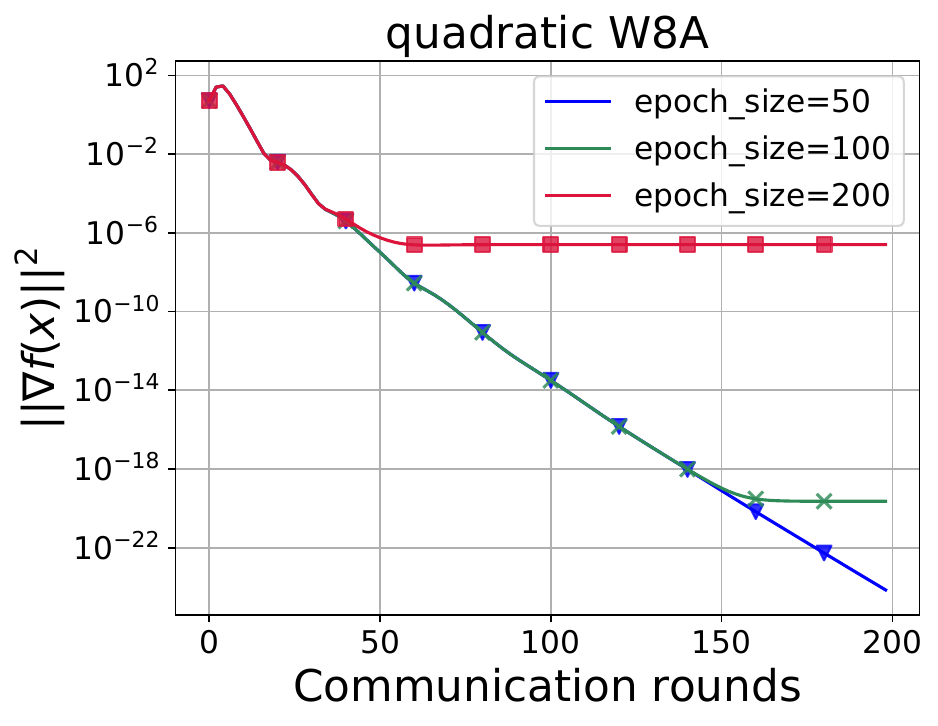}
   \end{subfigure}
   \caption{\texttt{ASEG} with \texttt{SVRG}-solver. The comparison is made solving quadratic problem on $M=200$ nodes with different epoch sizes of solver}
   \label{fig:quadratic_svrg_aseg}
\end{figure}

\begin{figure}[h!] 
   \begin{subfigure}{0.32\textwidth}
       \centering
       \includegraphics[width=\linewidth]{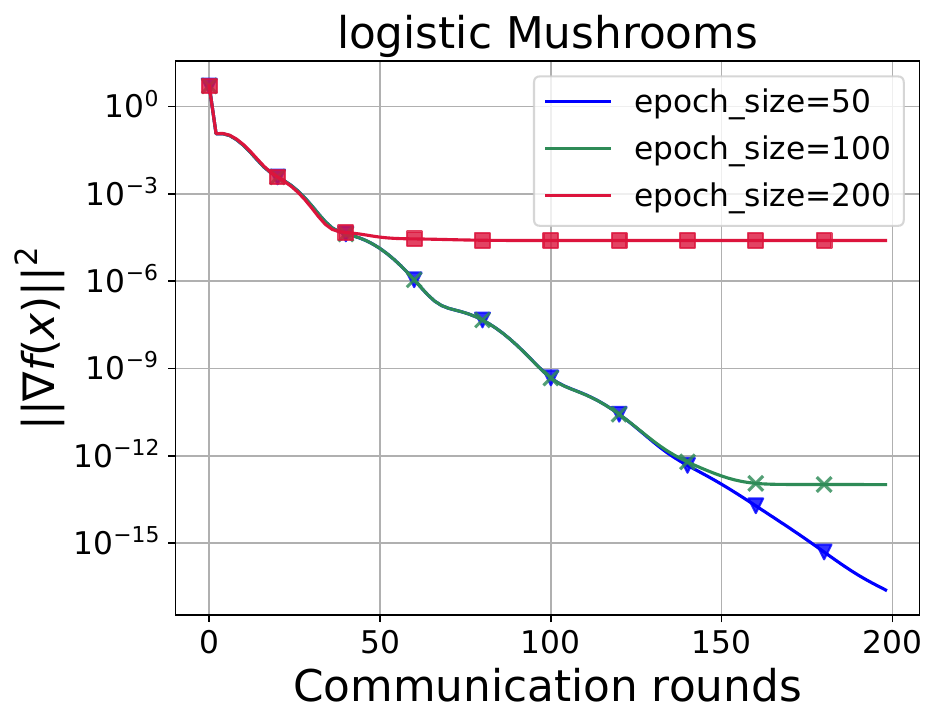}
   \end{subfigure}
   \begin{subfigure}{0.32\textwidth}
        \centering
       \includegraphics[width=\linewidth]{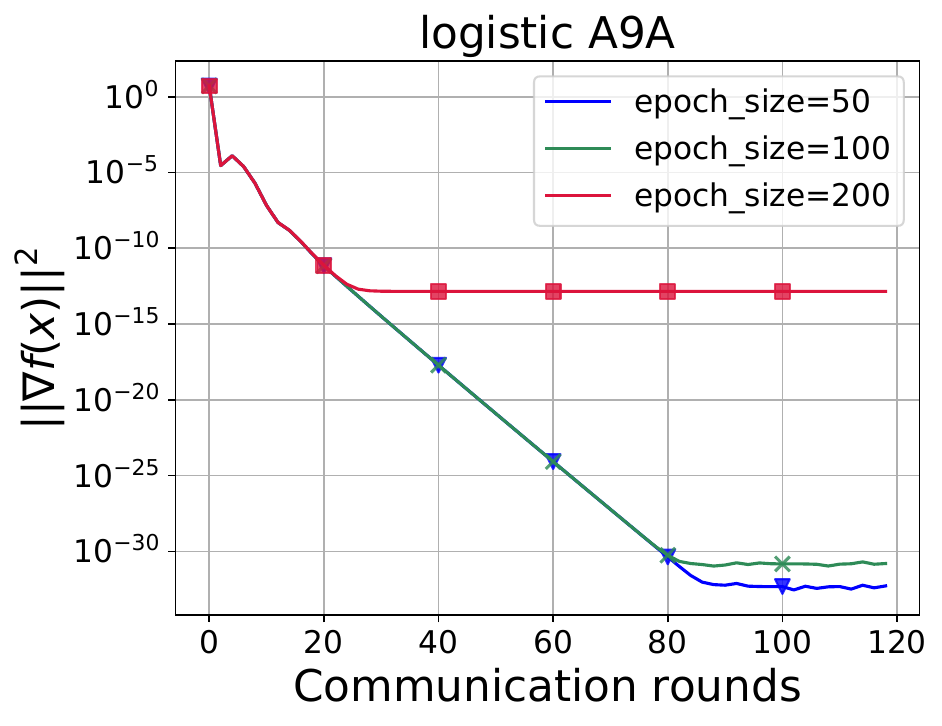}
   \end{subfigure}
   \begin{subfigure}{0.32\textwidth}
   \centering
       \includegraphics[width=\linewidth]{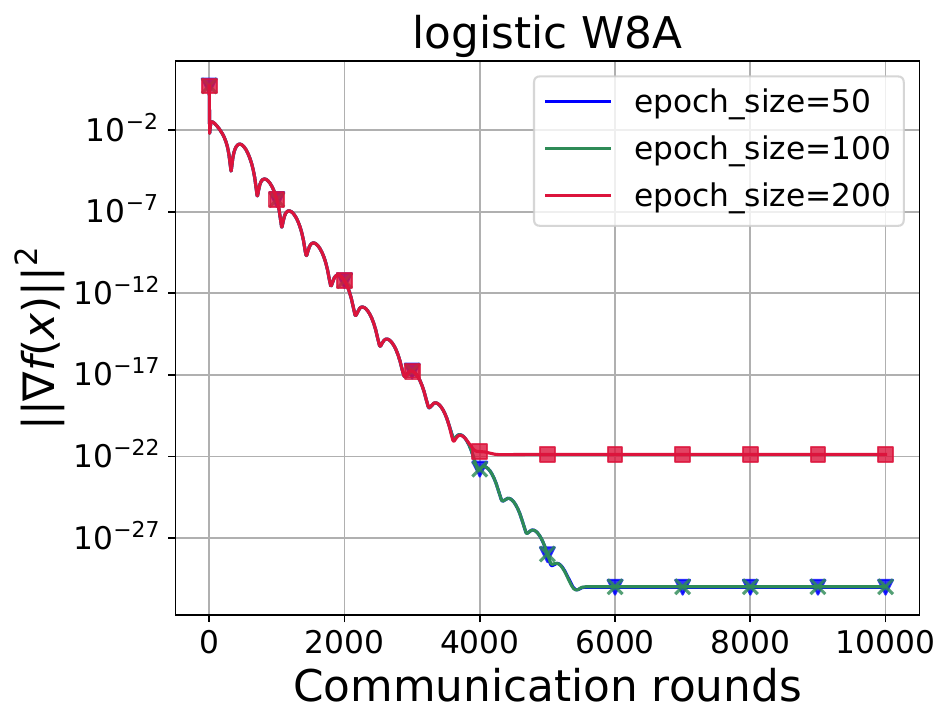}
   \end{subfigure}
   \caption{\texttt{ASEG} with \texttt{SVRG}-solver. The comparison is made solving logistic problem on $M=200$ nodes with different epoch sizes of solver}
   \label{fig:logistic_svrg_aseg}
\end{figure}

\begin{figure}[h!] 
   \begin{subfigure}{0.32\textwidth}
       \centering
       \includegraphics[width=\linewidth]{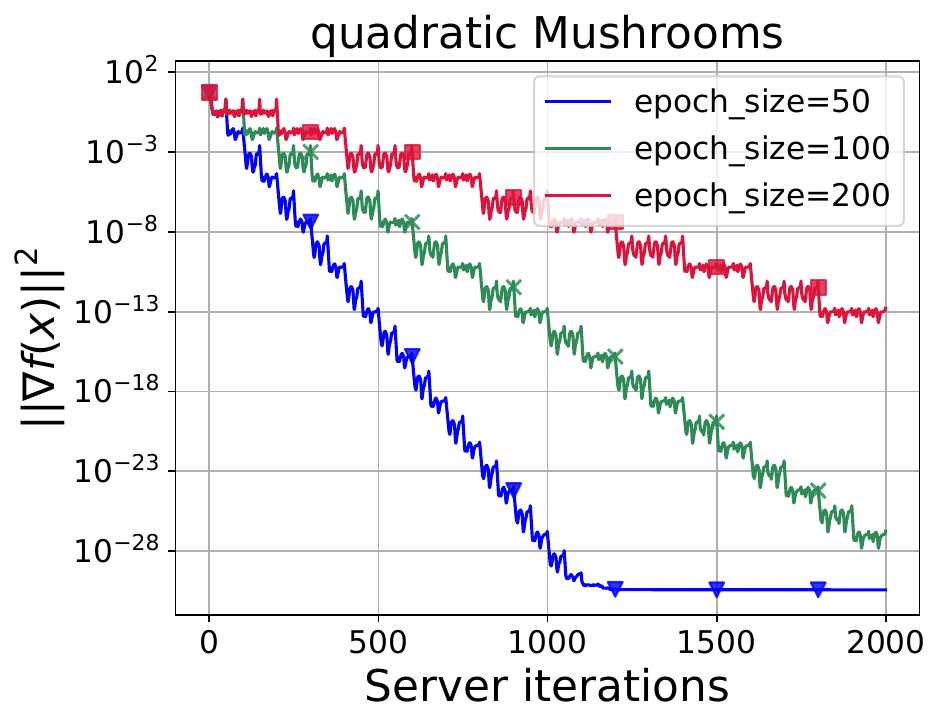}
   \end{subfigure}
   \begin{subfigure}{0.32\textwidth}
        \centering
       \includegraphics[width=\linewidth]{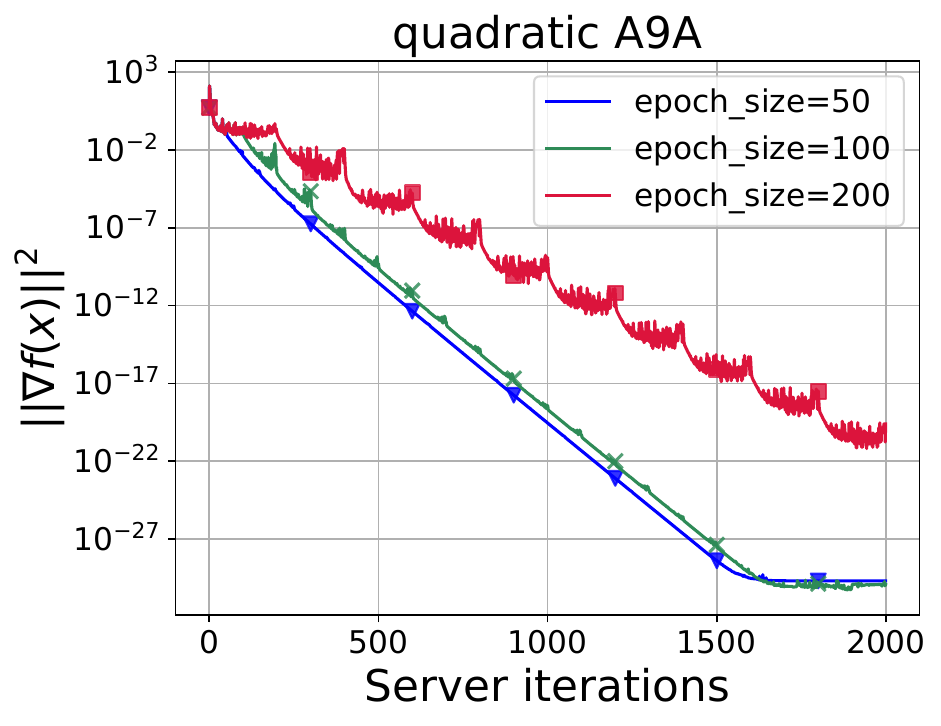}
   \end{subfigure}
   \begin{subfigure}{0.32\textwidth}
   \centering
       \includegraphics[width=\linewidth]{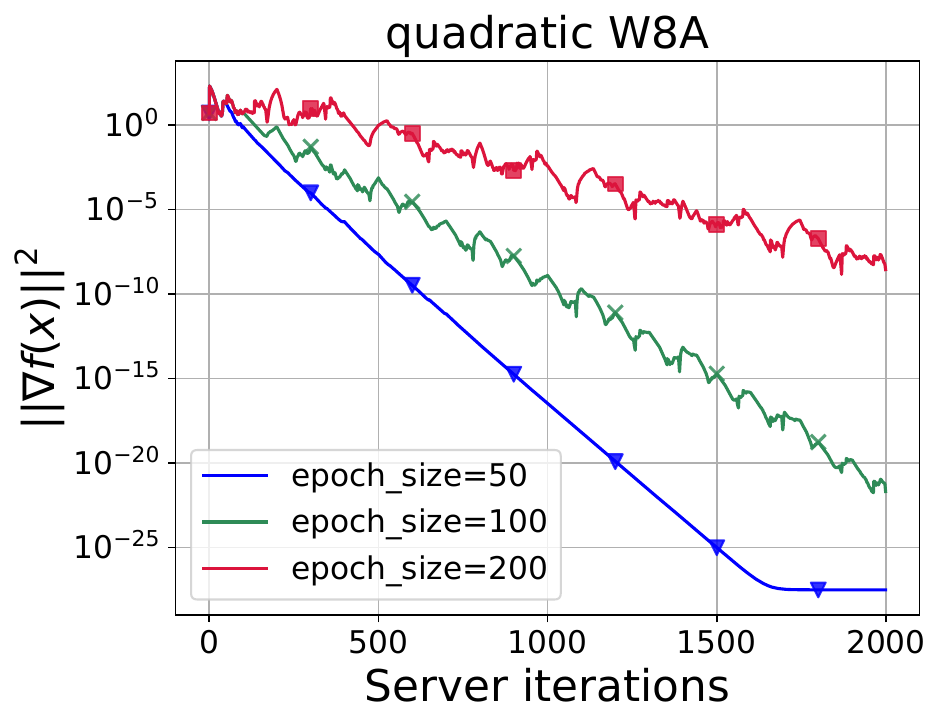}
   \end{subfigure}
   \caption{Subtask with \texttt{SVRG}-solver. The comparison is made solving quadratic problem on $M=200$ nodes with different epoch sizes of solver}
   \label{fig:quadratic_svrg_subtask}
\end{figure}
\begin{figure}[h!] 
   \begin{subfigure}{0.32\textwidth}
       \centering
       \includegraphics[width=\linewidth]{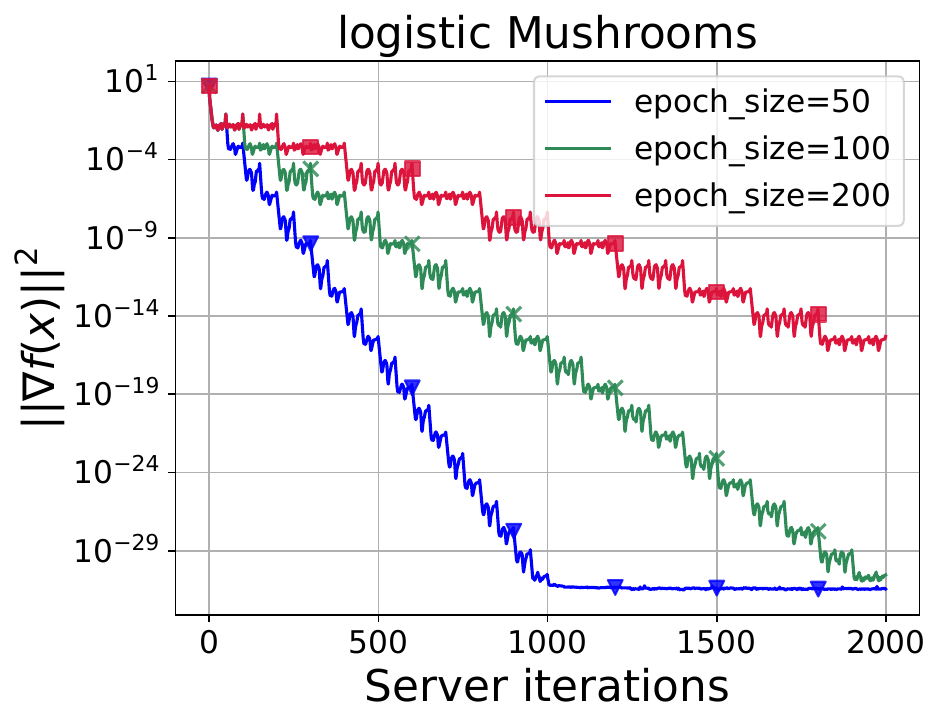}
   \end{subfigure}
   \begin{subfigure}{0.32\textwidth}
        \centering
       \includegraphics[width=\linewidth]{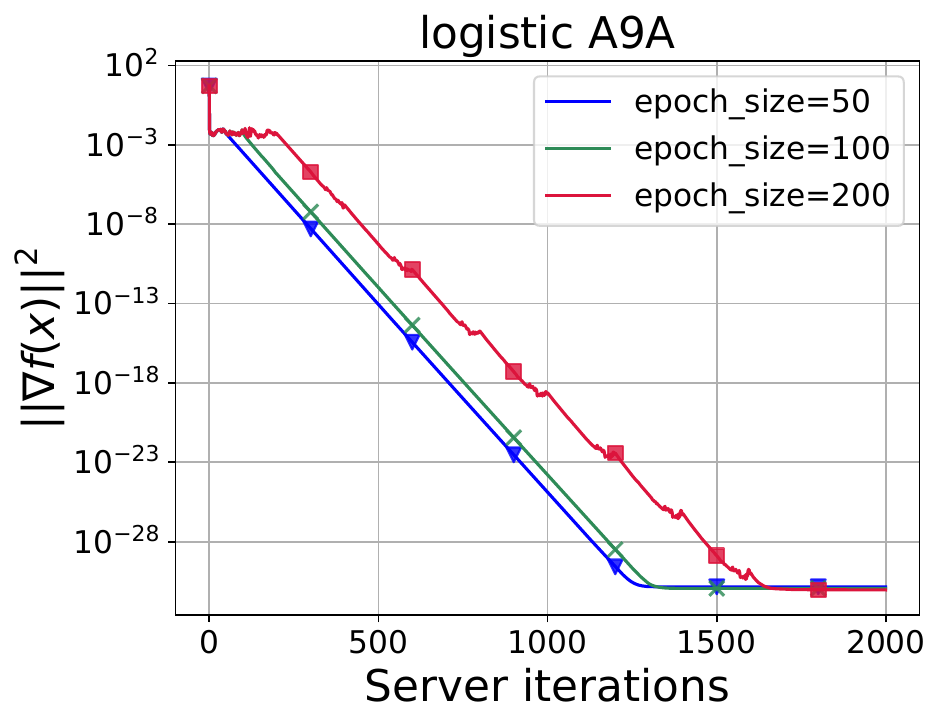}
   \end{subfigure}
   \begin{subfigure}{0.32\textwidth}
   \centering
       \includegraphics[width=\linewidth]{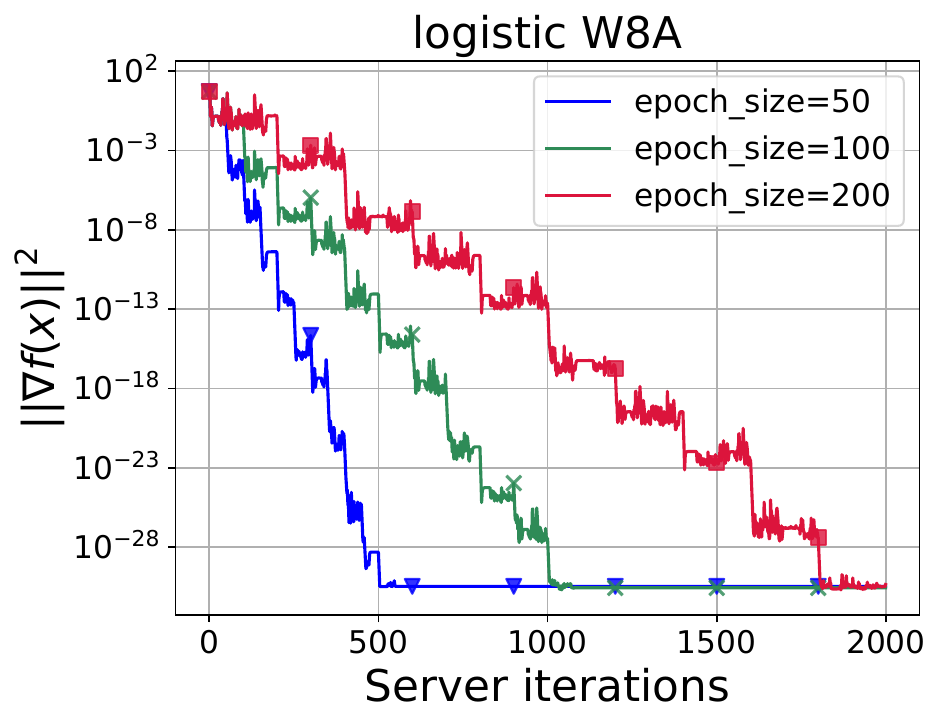}
   \end{subfigure}
   \caption{Subtask with \texttt{SVRG}-solver. The comparison is made solving logistic problem on $M=200$ nodes with different epoch sizes of solver}
   \label{fig:logistic_svrg_subtask}
\end{figure}

\begin{figure}[h!] 
   \begin{subfigure}{0.32\textwidth}
       \centering
       \includegraphics[width=\linewidth]{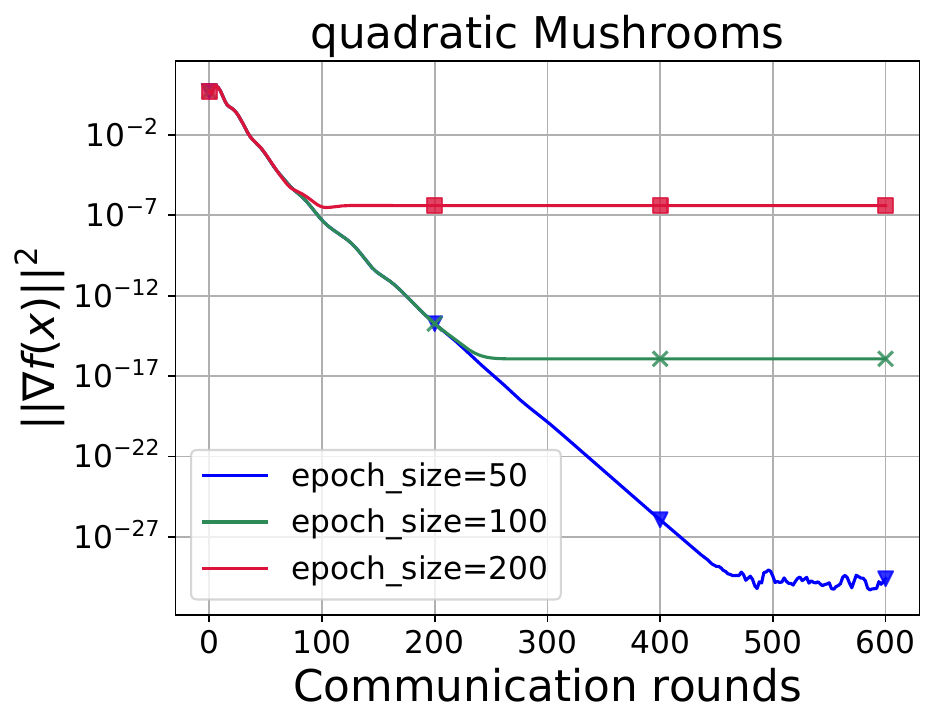}
   \end{subfigure}
   \begin{subfigure}{0.32\textwidth}
        \centering
       \includegraphics[width=\linewidth]{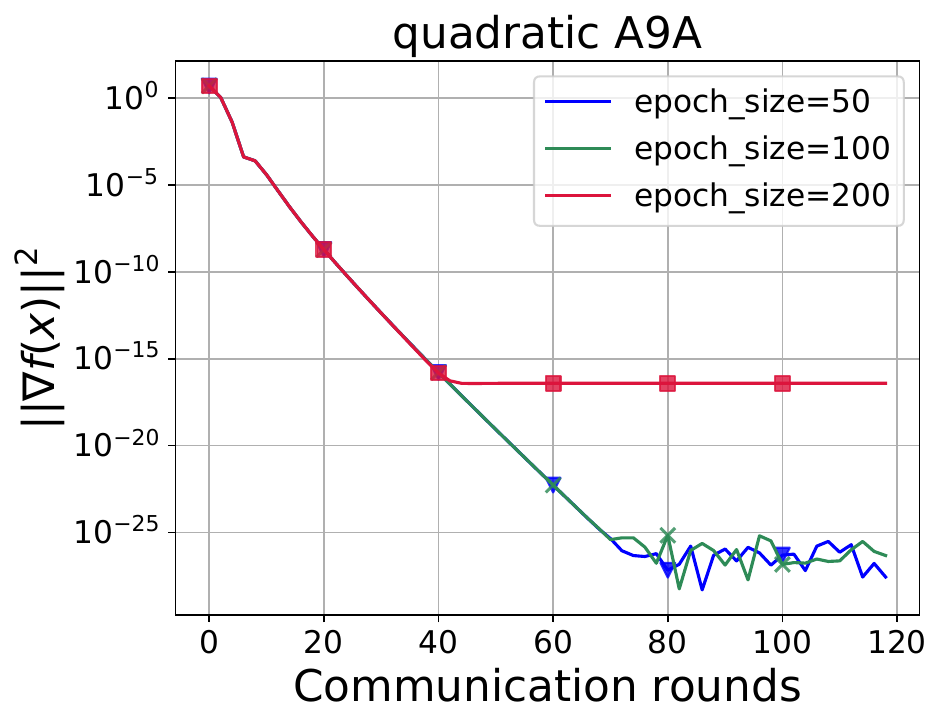}
   \end{subfigure}
   \begin{subfigure}{0.32\textwidth}
   \centering
       \includegraphics[width=\linewidth]{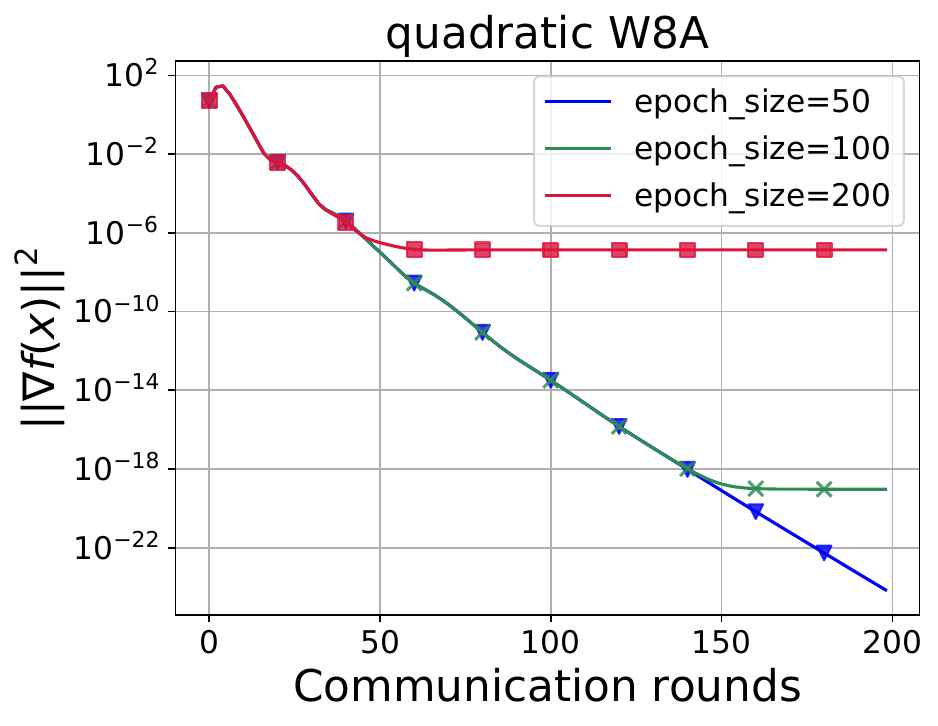}
   \end{subfigure}
   \caption{\texttt{ASEG} with \texttt{SARAH}-solver. The comparison is made solving quadratic problem on $M=200$ nodes with different epoch sizes of solver}
   \label{fig:quadratic_sarah_aseg}
\end{figure}
\begin{figure}[h!] 
   \begin{subfigure}{0.32\textwidth}
       \centering
       \includegraphics[width=\linewidth]{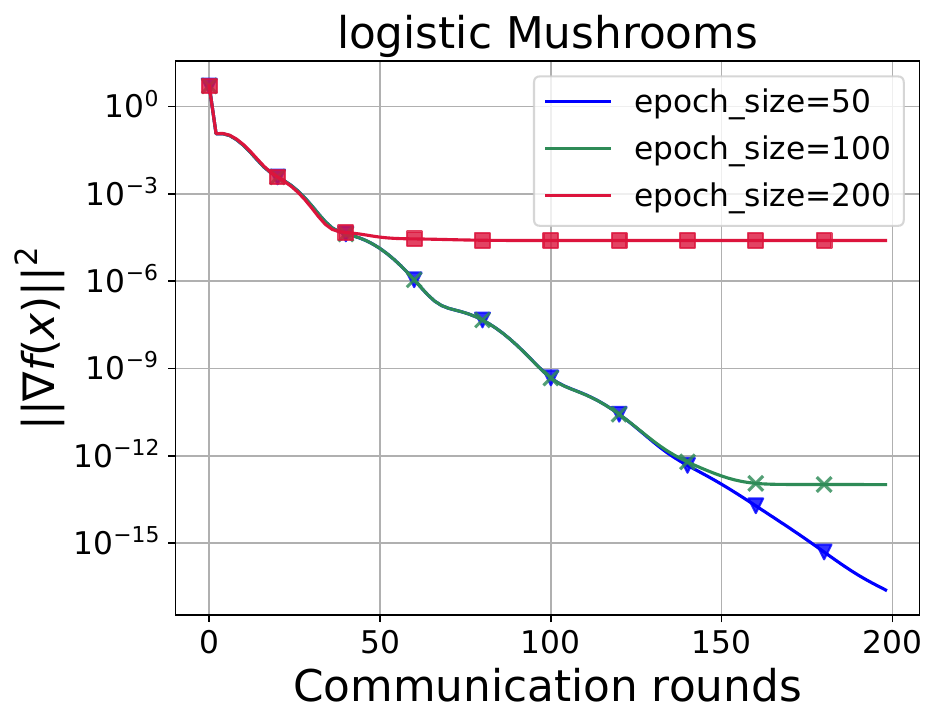}
   \end{subfigure}
   \begin{subfigure}{0.32\textwidth}
        \centering
       \includegraphics[width=\linewidth]{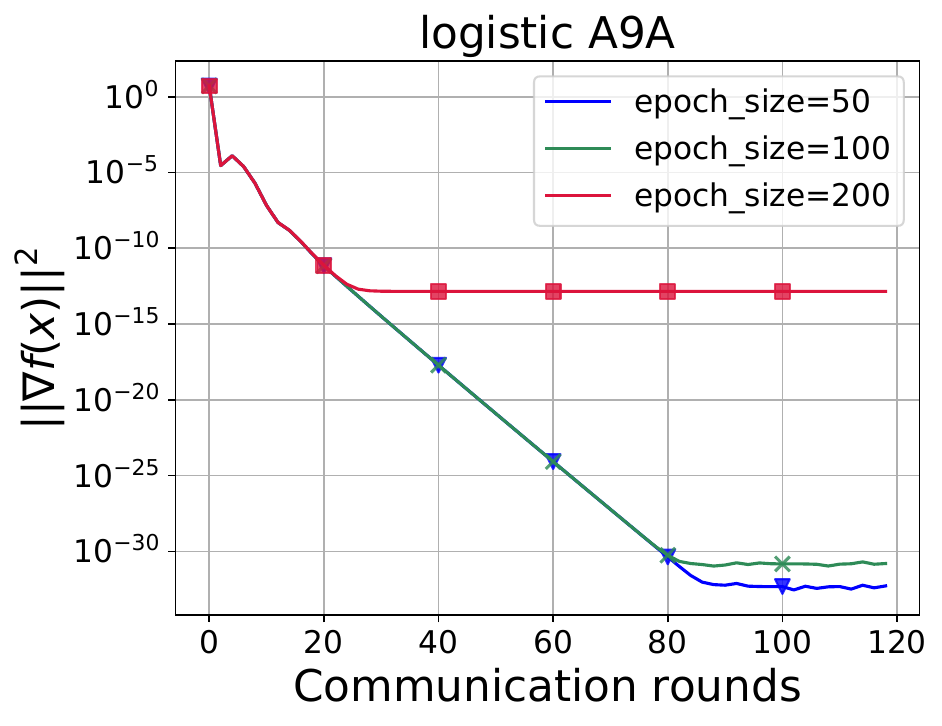}
   \end{subfigure}
   \begin{subfigure}{0.32\textwidth}
   \centering
       \includegraphics[width=\linewidth]{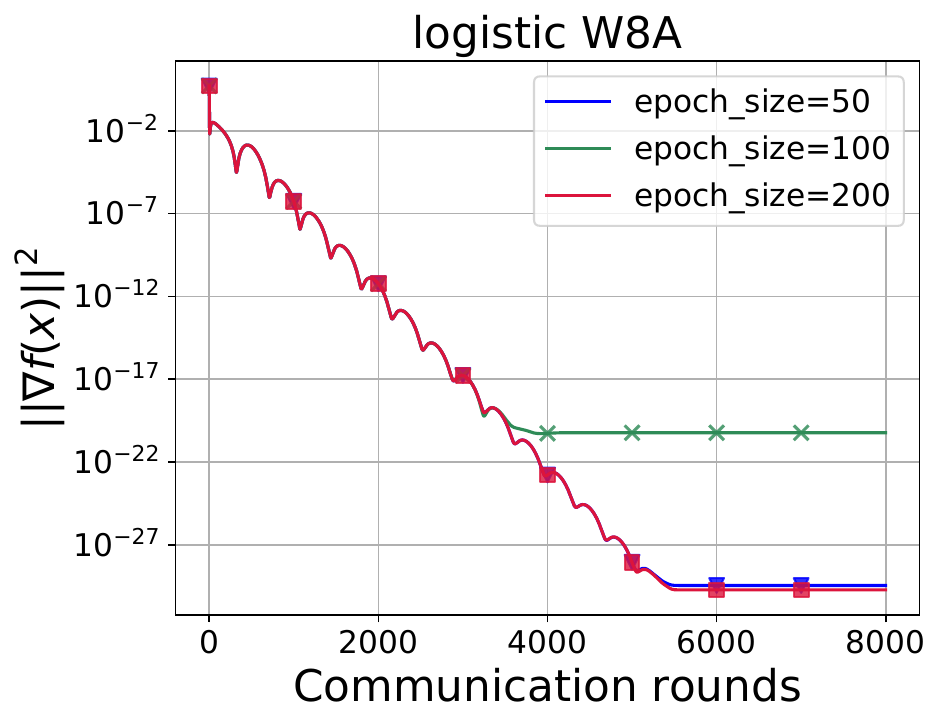}
   \end{subfigure}
   \caption{\texttt{ASEG} with \texttt{SARAH}-solver. The comparison is made solving logistic problem on $M=200$ nodes with different epoch sizes of solver}
   \label{fig:logistic_sarah_aseg}
\end{figure}

\begin{figure}[h!] 
   \begin{subfigure}{0.32\textwidth}
       \centering
       \includegraphics[width=\linewidth]{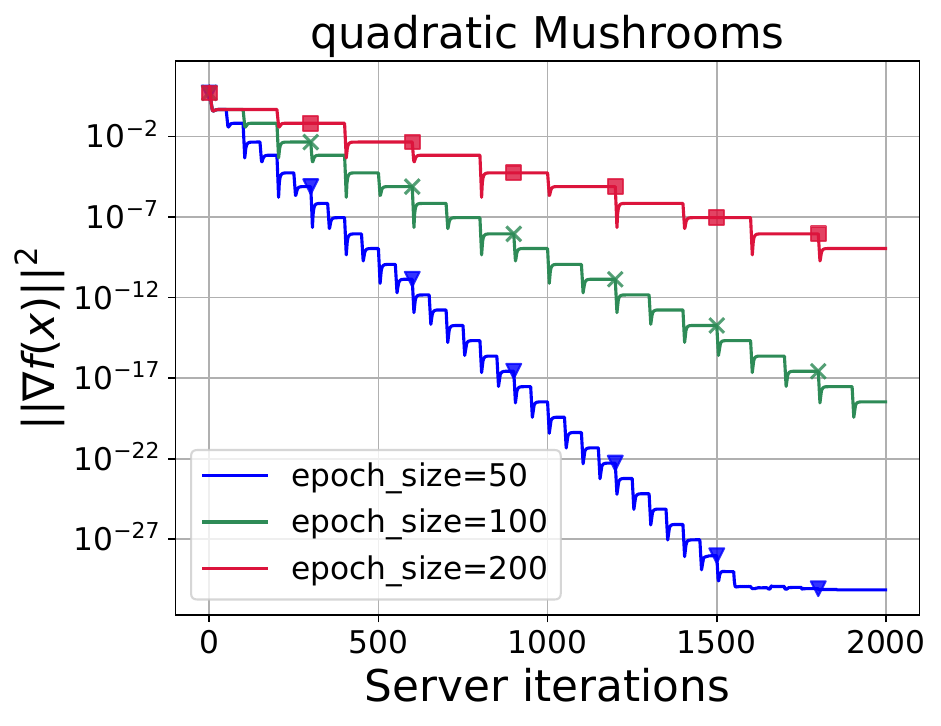}
   \end{subfigure}
   \begin{subfigure}{0.32\textwidth}
        \centering
       \includegraphics[width=\linewidth]{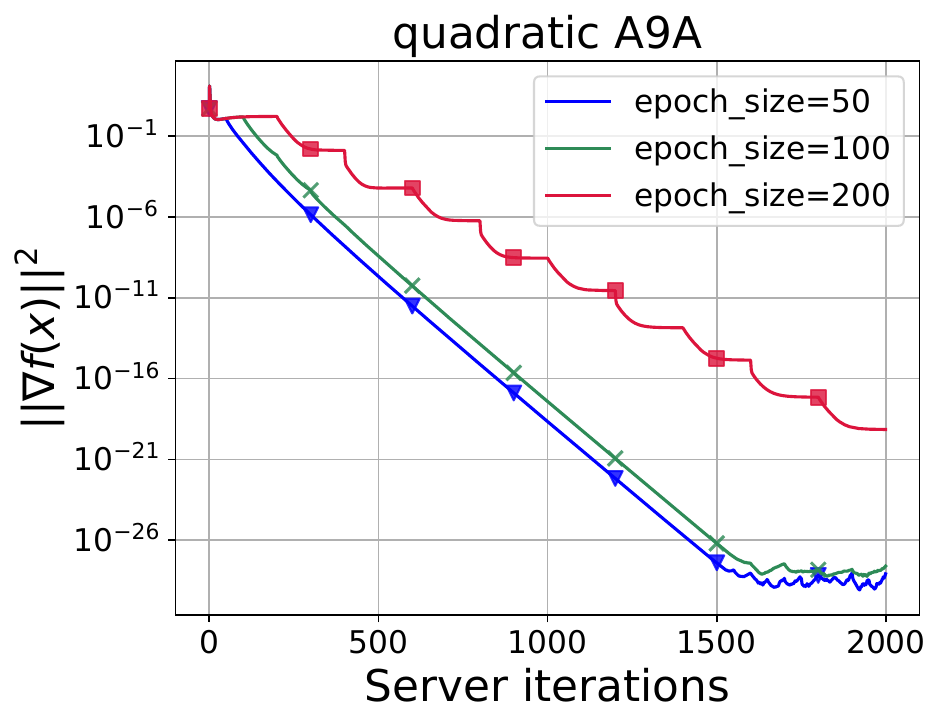}
   \end{subfigure}
   \begin{subfigure}{0.32\textwidth}
   \centering
       \includegraphics[width=\linewidth]{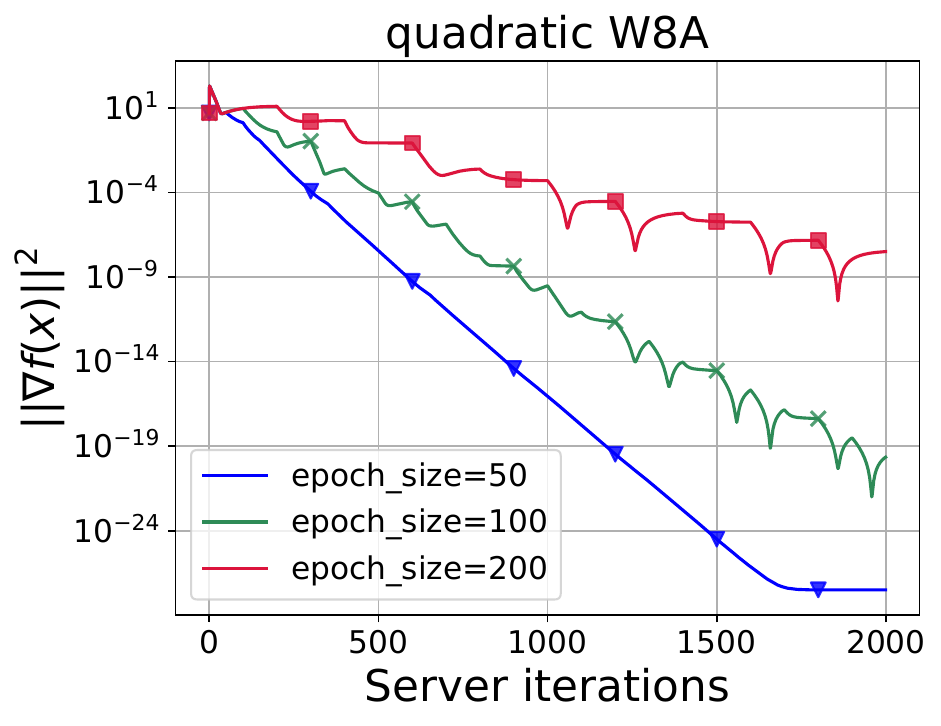}
   \end{subfigure}
   \caption{Subtask with \texttt{SARAH}-solver. The comparison is made solving quadratic problem on $M=200$ nodes with different epoch sizes of solver}
   \label{fig:quadratic_sarah_subtask}
\end{figure}
\begin{figure}[h!] 
   \begin{subfigure}{0.32\textwidth}
       \centering
       \includegraphics[width=\linewidth]{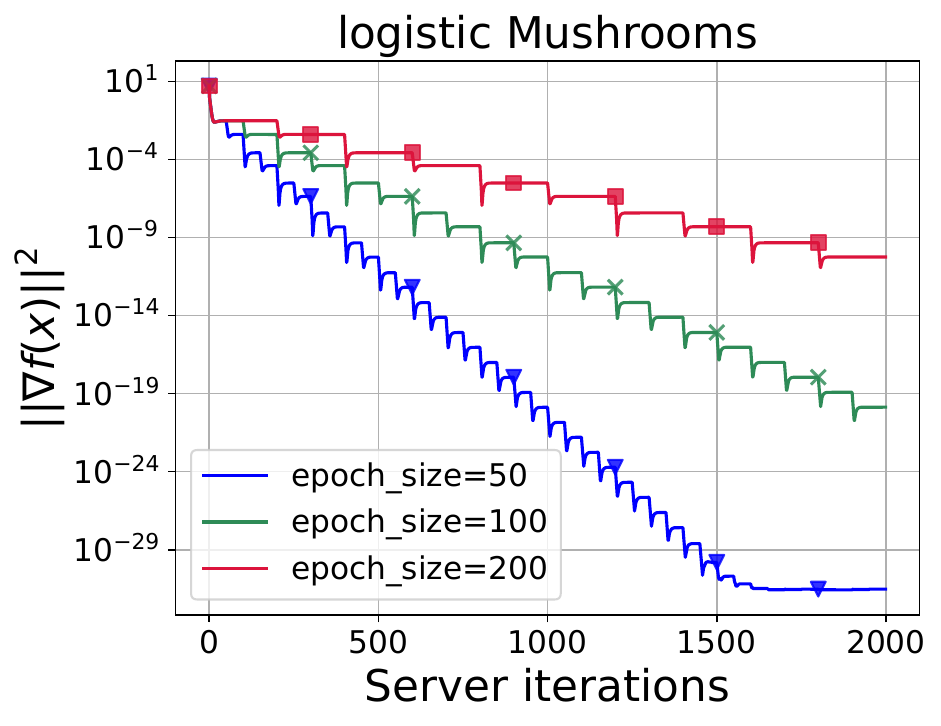}
   \end{subfigure}
   \begin{subfigure}{0.32\textwidth}
        \centering
       \includegraphics[width=\linewidth]{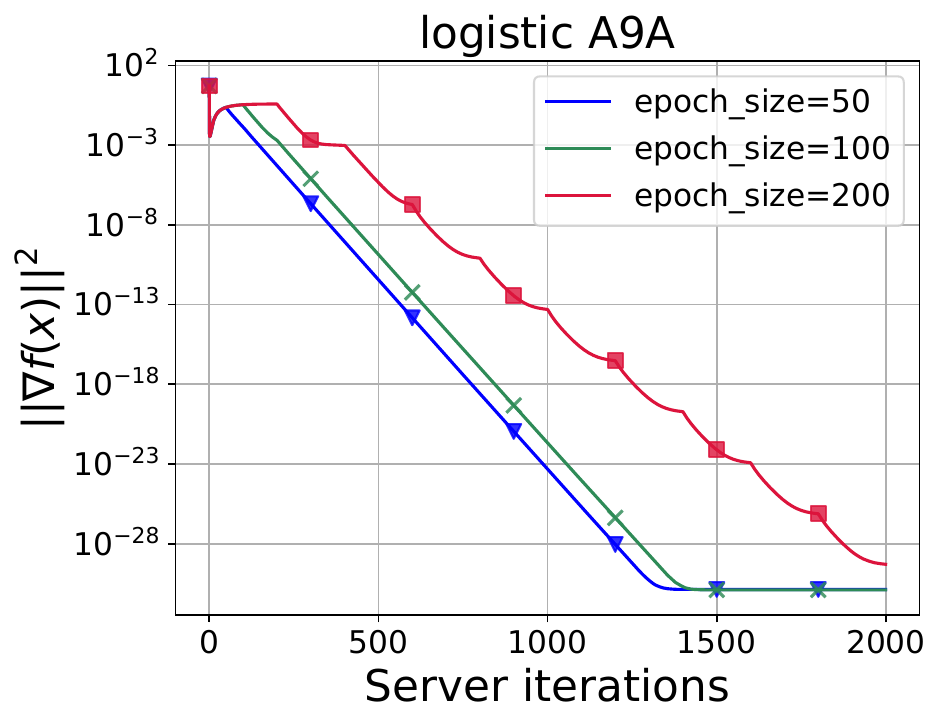}
   \end{subfigure}
   \begin{subfigure}{0.32\textwidth}
   \centering
       \includegraphics[width=\linewidth]{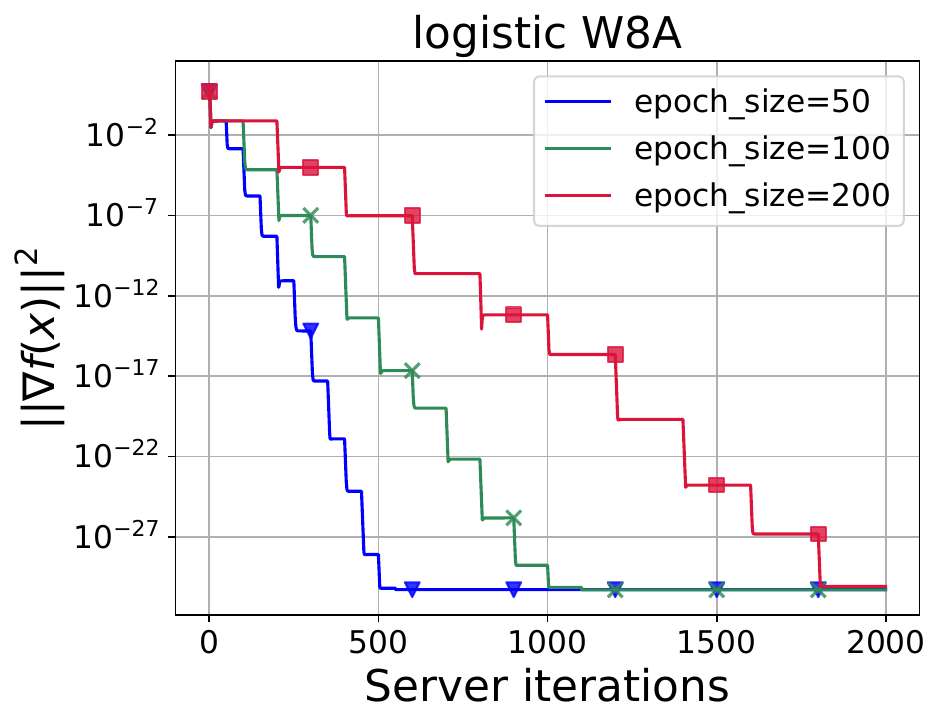}
   \end{subfigure}
   \caption{Subtask with \texttt{SARAH}-solver. The comparison is made solving logistic problem on $M=200$ nodes with different epoch sizes of solver}
   \label{fig:logistic_sarah_subtask}
\end{figure}

\section{Conclusion}
Based on the \texttt{SGD}-like approach, we have adapted \texttt{Accelerated ExtraGradient} to solve Federated Learning problems by adding two stochasticities: sampling of computational nodes and application of noise to local gradients. Numerical experiments show that for small values of $\nicefrac{\delta}{\mu}$ or a sufficiently large number of available nodes, Algorithm \ref{st_ae:alg} (\texttt{ASEG}) is significantly superior to accelerated variance reduction methods (Figure \ref{aseg_vs_svrs_a9a}), which is consistent with the theory (Corollary \ref{st_ae:cor_1}). If the problem parameters are not "good" enough, there is a $\nicefrac{\delta^2}{(\mu^2M)}\cdot\log\nicefrac{1}{\varepsilon}$ term in the communication complexity of batched \texttt{ASEG} (Corollary \ref{st_ae:cor_2}). Nevertheless, the superiority in the number of communications remains even in this case (Figure \ref{aseg_vs_svrs_mush}, Figure \ref{aseg_vs_svrs_a9a}). Experiments also show the robustness of our method against noise (Figure \ref{fig:stability_quadratic_svrg}, Figure \ref{fig:stability_logistic_svrg}, Figure \ref{fig:stability_quadratic_sarah}, Figure \ref{fig:stability_logistic_sarah}). The server subproblem has also been studied numerically (Figure \ref{fig:quadratic_svrg_subtask}, Figure \ref{fig:logistic_svrg_subtask}, Figure  \ref{fig:quadratic_sarah_subtask}, Figure \ref{fig:logistic_sarah_subtask}) and theoretically. The analysis shows the inefficiency of randomly choosing the number of solver iterations for the internal subtask (Proposition \ref{prop:1}).

\subsection*{Acknowledgements}

The work was done in the Laboratory of Federated Learning Problems of the ISP RAS (Supported by Grant App. No. 2 to Agreement No. 075-03-2024-214).

\clearpage
\appendix


In all the appendices, we consider $r_1(x)=q(x)$ and $p(x)=r(x)-r_1(x)$. Mathematical expectations are implied in the context of the proof.
\section{\textbf{Lemma 1}}\label{ap:proof_th_1}

Here we prove Lemma \ref{st_ae:main_lemma}. First, we need the following:
\begin{lemma}\label{st_ae:lemma}
    Consider Algorithm \ref{st_ae:alg}. Let  $\theta$ be defined as in Lemma \ref{st_ae:main_lemma}: $\theta \leq \frac{1}{3\delta}$. Then under Assumptions \ref{ass:1} and \ref{ass:2}, the following inequality holds for all $\bar{x} \in \mathbb{R}^d$:
	\begin{equation}
	\label{st_ae:eq:1}
	\begin{split}
		\mathbb{E}\big[2\langle\bar{x} - x_g^k, t_k\rangle \big|& x^k, x_f^{k+1}\big]
		\\\leq&
		2\left[ r(\bar{x}) - r(x_f^{k+1}) \right] - \mu \lVert x_f^{k+1} - \bar{x} \rVert^2
	- \theta \lVert \nabla r(x_f^{k+1}) \rVert^2
	\\&
	+\frac{11\theta}{3} \left( \lVert \nabla \bar{A}_\theta^k(x_f^{k+1}) \rVert^2 - \frac{9\delta^2}{11} \lVert x_g^k - \arg\min_{x \in \mathbb{R}^d} \bar{A}_\theta^k(x) \rVert^2 \right) 
    \\
    &+ 3 \theta \lVert \nabla p(x_g^k) - s_k \rVert ^ 2.
	\end{split}
	\end{equation}
\end{lemma}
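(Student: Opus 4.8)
The plan is to start by removing the randomness in $t_k$. In the second communication round $I$ is a uniform sample of $B$ nodes and each stochastic oracle is unbiased (Assumption \ref{ass:2}), so $\E[t_k \mid x^k, \x] = \nabla r(\x)$; note that only unbiasedness is needed here, not the variance bound. Hence the left-hand side equals the deterministic quantity $2\langle \bar{x} - x_g^k, \nabla r(\x)\rangle$, and I would split $\bar{x} - x_g^k = (\bar{x} - \x) + (\x - x_g^k)$. The first inner product $2\langle \bar{x} - \x, \nabla r(\x)\rangle$ is controlled by $\mu$-strong convexity of $r$ (Definition \ref{def:convexity}), producing $2[r(\bar{x}) - r(\x)] - \mu\|\x - \bar{x}\|^2$, i.e.\ the first two terms of the claimed bound.

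The heart of the argument is the second inner product $2\langle \x - x_g^k, \nabla r(\x)\rangle$. Using the subproblem gradient $\nA(\x) = s_k + \tfrac{1}{\theta}(\x - x_g^k) + \nabla r_1(\x)$ and setting $v \defeq \nA(\x) - s_k + \nabla p(\x)$ with $p = r - r_1$, one obtains the identity $\x - x_g^k = \theta\big(v - \nabla r(\x)\big)$. Substituting and regrouping via the polarization identity yields the exact equality $2\langle \x - x_g^k, \nabla r(\x)\rangle = -\theta\|\nabla r(\x)\|^2 - \tfrac{1}{\theta}\|\x - x_g^k\|^2 + \theta\|v\|^2$. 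This is the key move: besides the target term $-\theta\|\nabla r(\x)\|^2$ it exposes a \emph{negative} quadratic $-\tfrac{1}{\theta}\|\x - x_g^k\|^2$, which is what will ultimately generate the negative multiple of $\|x_g^k - x_\star^A\|^2$, where $x_\star^A \defeq \arg\min_{x\in\R}\A(x)$. A naive Young-plus-triangle estimate of the original inner product instead produces this term with the wrong (positive) sign, so keeping the identity intact is essential.

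I would then bound $\theta\|v\|^2$. Writing $v = \nA(\x) + \big(\nabla p(\x) - \nabla p(x_g^k)\big) + \big(\nabla p(x_g^k) - s_k\big)$ and using $\|a+b+c\|^2 \le 3(\|a\|^2 + \|b\|^2 + \|c\|^2)$ gives $\theta\|v\|^2 \le 3\theta\|\nA(\x)\|^2 + 3\theta\delta^2\|\x - x_g^k\|^2 + 3\theta\|\nabla p(x_g^k) - s_k\|^2$, where the middle term uses that $p = r - r_1$ is $\delta$-smooth (this is the only place the Hessian similarity of $r_1$ with $r$ enters, via Definition \ref{def:sim}). The last term already matches a term in the statement. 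Combining with the previous display, the coefficient of $\|\x - x_g^k\|^2$ becomes $-\tfrac{1}{\theta} + 3\theta\delta^2$, which is negative precisely because $\theta \le \tfrac{1}{3\delta}$.

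It remains to trade this negative quadratic for $\|x_g^k - x_\star^A\|^2$. Since $\A$ is $\tfrac{1}{\theta}$-strongly convex, its minimizer $x_\star^A$ satisfies $\|\x - x_\star^A\| \le \theta\|\nA(\x)\|$, and I would lower-bound $\|\x - x_g^k\|^2 \ge \tfrac{1}{1+c}\|x_g^k - x_\star^A\|^2 - \tfrac{\theta^2}{c}\|\nA(\x)\|^2$ for a free parameter $c>0$. Choosing $c$ appropriately and using $\theta \le \tfrac{1}{3\delta}$ pushes the coefficient of $\|\nA(\x)\|^2$ up to at most $\tfrac{11\theta}{3}$ and the coefficient of $\|x_g^k - x_\star^A\|^2$ down to at most $-3\theta\delta^2 = -\tfrac{11\theta}{3}\cdot\tfrac{9\delta^2}{11}$, which reassembles exactly the grouped term $\tfrac{11\theta}{3}\big(\|\nA(\x)\|^2 - \tfrac{9\delta^2}{11}\|x_g^k - x_\star^A\|^2\big)$. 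I expect the main obstacle to be this final balancing: verifying that the single constraint $\theta \le \tfrac{1}{3\delta}$ forces both coefficients into place simultaneously. With the substitution $t = \theta\delta \in (0,\tfrac13]$ this reduces to an elementary polynomial inequality (of the form $27t^4 - 21t^2 + 2 \ge 0$ on $(0,\tfrac13]$), which holds with equality at the endpoint $t = \tfrac13$.
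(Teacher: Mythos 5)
Your proposal is correct and follows essentially the same route as the paper's proof: unbiasedness of $t_k$, the split $\bar{x}-x_g^k=(\bar{x}-\x)+(\x-x_g^k)$, strong convexity of $r$, the polarization identity exposing $-\tfrac{1}{\theta}\|\x-x_g^k\|^2$, the three-term Young bound with $\delta$-smoothness of $p=r-r_1$, and the exchange of that negative quadratic for $\|x_g^k-\arg\min_x \A(x)\|^2$ via $\tfrac{1}{\theta}$-strong convexity of $\A$. The only difference is cosmetic bookkeeping at the end: the paper fixes your free parameter to $c=1$ (through the inequality $-\|a-c\|^2\le-\tfrac{1}{2}\|a-b\|^2+\|b-c\|^2$) and loosens $-\tfrac{1}{\theta}(1-3\theta^2\delta^2)$ to $-\tfrac{2}{3\theta}$ before balancing, so the constants fall out by two simple comparisons rather than your quartic inequality $27t^4-21t^2+2\ge0$.
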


\begin{proof}
Using the $\mu$-strong convexity of $r(x)$, we get

\begin{align*}
\mathbb{E}\big[2\langle\bar{x} - x_g^k, t_k\rangle \big| x^k, x_f^{k+1}\big] =& 2\langle\bar{x} - x_g^k, \mathbb{E}\big[t_k \big| x^k, x_f^{k+1}\big]\rangle
 \\ =& 2\langle\bar{x} - x_g^k, \nabla r(x_f^{k+1})\rangle \\=& 2\langle\bar{x} - x_f^{k+1}, \nabla r(x_f^{k+1})\rangle
+2\langle x_f^{k+1} - x_g^k, \nabla r(x_f^{k+1})\rangle \\
\leq& 2\left[r(\bar{x}) - r(x_f^{k+1})\right] - \mu \lVert x_f^{k+1} - \bar{x} \rVert^2
\\&+2\langle x_f^{k+1} - x_g^k, \nabla r(x_f^{k+1})\rangle \\
=& 2\left[r(\bar{x}) - r(x_f^{k+1})\right] - \mu \lVert x_f^{k+1} - \bar{x} \rVert^2
\\&+2\theta\left\langle\theta^{-1}(x_f^{k+1} - x_g^k), \nabla r(x_f^{k+1})\right\rangle.
\end{align*}
From the properties of the norm we can write
\begin{align*}
    \left\|\frac{\x-x_g^k}{\theta} + \nabla r(\x)\right\|^2=&\frac{1}{\theta^2}\|\x-x_g^k\|^2 + \|\nabla r(\x)\|^2 \\&+ \frac{2}{\theta}\left\langle \x-x_g^k,\nabla r(\x) \right\rangle.
\end{align*}
Thus,
\begin{align*}
    \mathbb{E}\big[2\langle\bar{x} - x_g^k, t_k\rangle \big| x^k, x_f^{k+1}\big] =& 2\left[r(\bar{x}) - r(x_f^{k+1})\right] - \mu \lVert x_f^{k+1} - \bar{x} \rVert^2 -\frac{1}{\theta}\lVert x_f^{k+1} - x_g^k \rVert^2 \\&- \theta \lVert \nabla r(x_f^{k+1}) \rVert^2 + \theta \lVert \theta^{-1}(x_f^{k+1} - x_g^k) + \nabla r(x_f^{k+1}) \rVert^2.
\end{align*}
The definition of $\bar{A}_\theta^k(x)$ and $\delta$-Lipschitzness of $\nabla p$ give
\begin{align*}
\mathbb{E}\big[2\langle\bar{x} - x_g^k, t_k\rangle \big|& x^k, x_f^{k+1}\big]
\\\leq& 2\left[r(\bar{x}) - r(x_f^{k+1})\right] - \mu \lVert x_f^{k+1} - \bar{x}\rVert^2
-\frac{1}{\theta}\lVert x_f^{k+1} - x_g^k\rVert^2
\\&+\theta\lVert \nabla \bar{A}_\theta^k(x_f^{k+1}) + \nabla p(x_f^{k+1}) - \nabla p(x_g^{k}) + \nabla p(x_g^{k}) - s_k)\rVert^2 \\&- \theta\lVert \nabla r(x_f^{k+1})\rVert^2
\end{align*}
Note that
\begin{align*}
    \|a+b+c\|^2\leq3\|a\|^2+3\|b\|^2+3\|c\|^2,
\end{align*}
and the Hessian similarity (Definition \ref{def:sim}) implies 
\begin{align*}
    \|\nabla p(\x)-\nabla p(x_g^k)\|^2 \leq \delta^2\|\x-x_g^k\|^2.
\end{align*}
Thus,
\begin{align*}
    \mathbb{E}\big[2\langle\bar{x} - x_g^k, t_k\rangle \big| x^k, x_f^{k+1}\big] \leq& 2\left[r(\bar{x}) - r(x_f^{k+1})\right] - \mu \lVert x_f^{k+1} - \bar{x}\rVert^2
-\frac{1}{\theta}\lVert x_f^{k+1} - x_g^k\rVert^2 \\&- \theta\lVert \nabla r(x_f^{k+1})\rVert^2
+3\theta\lVert \nabla \bar{A}_\theta^k(x_f^{k+1})\rVert^2
\\&+3\theta \delta^2\lVert x_f^{k+1} - x_g^k\rVert^2
+3\theta \lVert \nabla p(x_g^k) - s_k \rVert ^ 2
\\=& 2\left[r(\bar{x}) - r(x_f^{k+1})\right] - \mu \lVert x_f^{k+1} - \bar{x}\rVert^2
\\&-\frac{1}{\theta}\left(1 - 3\theta^2\delta^2\right) \lVert x_f^{k+1} - x_g^k\rVert^2
- \theta\lVert \nabla r(x_f^{k+1})\rVert^2
\\&+3\theta\lVert \nabla \bar{A}_\theta^k(x_f^{k+1})\rVert^2
+3\theta \lVert \nabla p(x_g^k) - s_k \rVert ^ 2.
\end{align*}
With $\theta \leq \frac{1}{3\delta}$, we have
	\begin{align*}
	\mathbb{E}\big[2\langle\bar{x} - x_g^k, t_k\rangle \big| x^k, x_f^{k+1}\big]
	\leq& 2\left[r(\bar{x}) - r(x_f^{k+1})\right] - \mu \lVert x_f^{k+1} - \bar{x} \rVert^2
	-\frac{2}{3\theta} \lVert x_f^{k+1} - x_g^k \rVert^2 
	\\&- \theta \lVert \nabla r(x_f^{k+1}) \rVert^2 + 3\theta \lVert \nabla \bar{A}_\theta^k(x_f^{k+1}) \rVert^2 \\&+ 3 \theta \lVert \nabla p(x_g^k) - s_k \rVert ^ 2\\
	\leq& 2\left[r(\bar{x}) - r(x_f^{k+1})\right] - \mu \lVert x_f^{k+1} - \bar{x} \rVert^2 - \theta \lVert \nabla r(x_f^{k+1}) \rVert^2\\&+3\theta \lVert \nabla \bar{A}_\theta^k(x_f^{k+1}) \rVert^2 + 3 \theta \lVert \nabla p(x_g^k) - s_k \rVert ^ 2
	\\&-\frac{1}{3\theta} \lVert x_g^k - \arg\min_{x \in \mathbb{R}^d} \bar{A}_\theta^k(x) \rVert^2 \\
	&+\frac{2}{3\theta} \lVert x_f^{k+1} - \arg\min_{x \in \mathbb{R}^d} \bar{A}_\theta^k(x) \rVert^2.
\end{align*}
Here we used the fact that:
\begin{align*}
    -\|a-c\|^2\leq-\frac{1}{2}\|a-b\|^2+\|b-c\|^2.
\end{align*}
One can observe that $A_\theta^k(x)$ is $\frac{1}{\theta}$-strongly convex. Hence,
	\begin{align*}
	\mathbb{E}\big[2\langle\bar{x} - x_g^k, t_k\rangle \big| x^k, x_f^{k+1}\big]
	\leq&
	2\left[ r(\bar{x}) - r(x_f^{k+1}) \right] - \mu \lVert x_f^{k+1} - \bar{x} \rVert^2 - \theta \lVert \nabla r(x_f^{k+1}) \rVert^2
	\\&+3\theta \lVert \nabla \bar{A}_\theta^k(x_f^{k+1}) \rVert^2 + 3 \theta \lVert \nabla p(x_g^k) - s_k \rVert ^ 2
	\\&-\frac{1}{3\theta}\lVert x_g^k - \arg\min_{x \in \mathbb{R}^d} \bar{A}_\theta^k(x) \rVert^2
	+\frac{2\theta}{3}\lVert \nabla \bar{A}_\theta^k(x_f^{k+1}) \rVert^2.
\end{align*}
Now we are ready to construct the expression with the convergence criterion.
\begin{align*}
    \mathbb{E}\big[2\langle\bar{x} - x_g^k, t_k\rangle \big|& x^k, x_f^{k+1}\big]
    \\\leq& 2\left[ r(\bar{x}) - r(x_f^{k+1}) \right] - \mu \lVert x_f^{k+1} - \bar{x} \rVert^2
	- \theta \lVert \nabla r(x_f^{k+1}) \rVert^2
	\\&
	+\frac{11\theta}{3} \left( \lVert \nabla \bar{A}_\theta^k(x_f^{k+1}) \rVert^2 - \frac{1}{11\theta^2} \lVert x_g^k - \arg\min_{x \in \mathbb{R}^d} \bar{A}_\theta^k(x) \rVert^2 \right) \\&+ 3 \theta \lVert \nabla p(x_g^k) - s_k \rVert ^ 2
	\\
	=&\;
	2\left[ r(\bar{x}) - r(x_f^{k+1}) \right] - \mu \lVert x_f^{k+1} - \bar{x} \rVert^2
	- \theta \lVert \nabla r(x_f^{k+1}) \rVert^2
	\\&
	+\frac{11\theta}{3} \left( \lVert \nabla \bar{A}_\theta^k(x_f^{k+1}) \rVert^2 - \frac{9\delta^2}{11} \lVert x_g^k - \arg\min_{x \in \mathbb{R}^d} \bar{A}_\theta^k(x) \rVert^2 \right)
 \\& + 3 \theta \lVert \nabla p(x_g^k) - s_k \rVert ^ 2.
\end{align*}
This completes the proof of Lemma \ref{st_ae:lemma}.
\end{proof}

Let us move on to proof of Lemma \ref{st_ae:main_lemma}:
\begin{lemma} \textbf{(Lemma 1)}
    Under Assumptions \ref{ass:1}, \ref{ass:sim} and \ref{ass:2}, the condition (\ref{st_aux:grad_app}) and the tuning (\ref{st_ae:naive_choice}) we obtain:
        \begin{align*}\label{st:ae_rec}
	\mathbb{E} \bigg[ \frac{1}{\eta} & \lVert x^{k+1} - x^* \rVert^2
	+
	\frac{2}{\tau}\left[r(x_f^{k+1}) - r(x^*)\right]\bigg]
        \\
	\leq&
	\left(1-\alpha \eta +\frac{12 \theta\delta^2 \eta }{B}\cdot\zeta\right)  \cdot \mathbb{E}\left[ \frac{1}{\eta} \lVert x^k - x^* \rVert^2\right]+ (1-\tau) \cdot \E\left[\frac{2}{\tau}\left[r(x_f^k) - r(x^*)\right]\right] 
        \\
        &+\E\left[\left(\frac{12\theta\delta^2}{B\tau}-\frac{\mu}{6}\right) \cdot\zeta \left(\|x_f^k-x_*\|^2 + \|\x-x_*\|^2\right)  \right]
        + \frac{4\theta}{B\tau}\sigma^2.
        \end{align*}
\end{lemma}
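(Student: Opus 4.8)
The plan is to start from the momentum--gradient step on Line \ref{st_ae:line:3}, $x^{k+1} = x^k + \eta\alpha(\x - x^k) - \eta t_k$, and expand the Lyapunov term. Writing $x^{k+1} - x^* = (x^k - x^*) + \eta(\alpha(\x - x^k) - t_k)$ and dividing by $\eta$ gives
\begin{align*}
\frac{1}{\eta}\|x^{k+1} - x^*\|^2 = \frac{1}{\eta}\|x^k - x^*\|^2 + 2\alpha\langle x^k - x^*, \x - x^k\rangle - 2\langle x^k - x^*, t_k\rangle + \eta\|\alpha(\x - x^k) - t_k\|^2.
\end{align*}
I would rewrite the momentum inner product via $2\langle x^k - x^*, \x - x^k\rangle = \|\x - x^*\|^2 - \|x^k - x^*\|^2 - \|\x - x^k\|^2$, which produces the $-\alpha\|x^k - x^*\|^2$ responsible for the $-\alpha\eta$ contraction factor, a negative $-\alpha\|\x - x^k\|^2$, and a $+\alpha\|\x - x^*\|^2$ to be absorbed later.

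The crucial step is to route $-2\langle x^k - x^*, t_k\rangle$ through Lemma \ref{st_ae:lemma}. Since $x_g^k = \tau x^k + (1-\tau)x_f^k$ one has $x^k - x^* = (x_g^k - x^*) + \tfrac{1-\tau}{\tau}(x_g^k - x_f^k)$, hence
\begin{align*}
-2\langle x^k - x^*, t_k\rangle = 2\langle x^* - x_g^k, t_k\rangle + \tfrac{1-\tau}{\tau}\cdot 2\langle x_f^k - x_g^k, t_k\rangle.
\end{align*}
Taking $\E[\,\cdot\mid x^k, \x]$ and applying Lemma \ref{st_ae:lemma} once with $\bar{x} = x^*$ (weight $1$) and once with $\bar{x} = x_f^k$ (weight $\tfrac{1-\tau}{\tau}$), the terms $\tfrac{11\theta}{3}\big(\|\nA(\x)\|^2 - \tfrac{9\delta^2}{11}\|x_g^k - \arg\min_{x\in\R}\A(x)\|^2\big)$ are discarded by the stopping criterion (\ref{st_aux:grad_app}). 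The function-value outputs combine into $-2[r(\x)-r(x^*)] + \tfrac{2(1-\tau)}{\tau}[r(x_f^k)-r(\x)]$; moving $\tfrac{2}{\tau}[r(\x)-r(x^*)]$ to the left cancels the $r(\x)$ contributions exactly and leaves precisely $(1-\tau)\tfrac{2}{\tau}[r(x_f^k)-r(x^*)]$, matching the statement.

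Next I would dispose of the quadratic residual. From $\eta\|\alpha(\x - x^k) - t_k\|^2 \le 2\eta\alpha^2\|\x - x^k\|^2 + 2\eta\|t_k\|^2$, the $\|\x-x^k\|^2$ part combines with the momentum's $-\alpha\|\x - x^k\|^2$ and is non-positive under $\eta \le \tfrac{1}{3\alpha}$, hence dropped. Taking expectation, $\E\|t_k\|^2 = \|\nabla r(\x)\|^2 + \E\|t_k - \nabla r(\x)\|^2$; the squared-gradient part cancels against the $-\tfrac{\theta}{\tau}\|\nabla r(\x)\|^2$ produced with total weight $\tfrac{1}{\tau}=1+\tfrac{1-\tau}{\tau}$ by the two calls, which is non-positive once $\eta \le \tfrac{\theta}{3\tau}$. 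For the variances, a tower expectation (over $s_k$, which determines $\x$, and over $t_k$) is taken: by Assumption \ref{ass:sim} with Assumption \ref{ass:2}, both $\E\|t_k - \nabla r(\x)\|^2$ and $\E\|s_k - \nabla p(x_g^k)\|^2$ are bounded by $\tfrac{1}{B}$ times $\sigma^2 \coloneqq \sigma_{sim}^2 + \sigma_{noise}^2$, plus a correction $\tfrac{\delta^2}{B}\zeta$ times $\|\x - x_*\|^2$, respectively $\|x_g^k - x_*\|^2$ (the similarity bound supplies the point-dependent term, the noise bound the constant). Splitting $\|x_g^k - x_*\|^2 \le \tau\|x^k - x_*\|^2 + (1-\tau)\|x_f^k - x_*\|^2$ by convexity distributes the $s_k$-variance onto the $\|x^k-x^*\|^2$ and $\|x_f^k-x_*\|^2$ slots, yielding the $+\tfrac{12\theta\delta^2\eta}{B}\zeta$ correction to the contraction factor and the $\tfrac{12\theta\delta^2}{B\tau}\zeta$ coefficients, while the noise pieces collect into $\tfrac{4\theta}{B\tau}\sigma^2$.

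The main obstacle is the allocation of the strong-convexity surplus so as to produce exactly the stated $-\tfrac{\mu}{6}\zeta$ offsets. Combining the momentum gain with the $-\mu\|\x - x^*\|^2$ from the $\bar{x}=x^*$ call leaves a budget $(\alpha - \mu)\|\x - x^*\|^2 = -\tfrac{2\mu}{3}\|\x - x^*\|^2$ (using $\alpha=\mu/3$), together with $-\tfrac{\mu(1-\tau)}{\tau}\|\x - x_f^k\|^2$ from the $\bar{x}=x_f^k$ call. These must be split via Young-type inequalities such as $\|\x-x^*\|^2 \ge \tfrac12\|x_f^k-x^*\|^2 - \|\x - x_f^k\|^2$ so that negative multiples of both $\|\x - x_*\|^2$ and $\|x_f^k - x_*\|^2$ appear, furnishing the $-\tfrac{\mu}{6}\zeta$ terms while retaining enough contraction. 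I expect matching these constants exactly, and keeping the nested conditioning straight, to be the delicate part; everything else reduces to the step-size constraints in (\ref{st_ae:naive_choice}) and routine applications of Young's inequality.
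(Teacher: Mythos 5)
Your proposal is correct and follows essentially the same path as the paper's own proof: the same expansion of the momentum step, the same decomposition of $x^k-x^*$ into the two inner products handled by Lemma \ref{st_ae:lemma} with $\bar{x}=x^*$ and $\bar{x}=x_f^k$, the same discarding of the criterion term via (\ref{st_aux:grad_app}), the same similarity-plus-noise variance bounds for $s_k$ and $t_k$ (the paper's Lemma \ref{lemma:stoch_oracles}) together with the convexity split of $\|x_g^k-x_*\|^2$, and the same Young-type allocation of the strong-convexity surplus into the $-\nicefrac{\mu}{6}$ offsets. The only deviations are harmless constant-level ones---your two-term Young inequality plus exact bias--variance decomposition is slightly tighter than the paper's three-term splitting, and your quoted variance constants are smaller than the $\nicefrac{2(\sigma_{sim}^2+\sigma_{noise}^2)}{B}$ and $\nicefrac{4\delta^2}{B}$ actually derived in Lemma \ref{lemma:stoch_oracles}---and both discrepancies only make your intermediate bound tighter, so the stated inequality still follows.
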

\begin{proof}
Using Line \ref{st_ae:line:3}, we write
	\begin{align*}
		\frac{1}{\eta}\lVert x^{k+1} - x^* \rVert^2
		=&
		\frac{1}{\eta}\lVert x^k - x^* \rVert^2
		+\frac{2}{\eta}\langle x^{k+1} - x^k, x^k - x^* \rangle
		+\frac{1}{\eta}\lVert x^{k+1} - x^k \rVert^2
		\\=&
		\frac{1}{\eta}\lVert x^k - x^* \rVert^2
		+2\alpha\langle x_f^{k+1} -x^k, x^k - x^* \rangle
		-2\langle t_k, x^k - x^* \rangle
		\\&+\frac{1}{\eta}\lVert x^{k+1} - x^k \rVert^2
		\\=&
		\frac{1}{\eta}\lVert x^k - x^* \rVert^2
		+\alpha\lVert x_f^{k+1} - x^* \rVert^2
		-\alpha\lVert x^k - x^* \rVert^2
		\\&-\alpha\lVert x_f^{k+1} - x^k \rVert^2
		-2\langle t_k, x^k - x^* \rangle
		+\frac{1}{\eta}\lVert x^{k+1} - x^k \rVert^2,
	\end{align*}
Here we use the properties of the norm to reveal the inner product. Line \ref{st_ae:line:1} of Algorithm \ref{st_ae:alg} gives
	\begin{align*}
		\frac{1}{\eta}\lVert x^{k+1} - x^* \rVert^2
		=&
		\left(\frac{1}{\eta}-\alpha\right)\lVert x^k - x^* \rVert^2
		+\alpha\lVert x_f^{k+1} - x^* \rVert^2
		+\frac{1}{\eta}\lVert x^{k+1} - x^k \rVert^2
		\\&-\alpha\lVert x_f^{k+1} - x^k \rVert^2
		+2\langle t_k, x^* - x_g^k \rangle
		+\frac{2(1-\tau)}{\tau}\langle t_k, x_f^k - x_g^k \rangle.
	\end{align*}
Using \eqref{st_ae:eq:1} with $\bar{x} = x^*$ and $\bar{x} = x_f^k$, we get
\begin{align*}
	\mathbb{E}\left[\frac{1}{\eta}\lVert x^{k+1} - x^* \rVert^2\Bigg| x^k, x_f^{k+1}\right]
	\leq&
	\left(\frac{1}{\eta}-\alpha\right)\lVert x^k - x^* \rVert^2
	+\alpha\lVert x_f^{k+1} - x^* \rVert^2
	\\&+\mathbb{E}\left[\frac{1}{\eta}\lVert x^{k+1} - x^k \rVert^2\Bigg| x^k, x_f^{k+1}\right]
	-\alpha\lVert x_f^{k+1} - x^k \rVert^2
	\\&+2\left[r(x^*) - r(x_f^{k+1})\right] - \mu\lVert x_f^{k+1} - x^* \rVert^2
	\\&+\frac{2(1-\tau)}{\tau}\left[r(x_f^{k}) - r(x_f^{k+1})\right]
	-\frac{\theta}{\tau}\lVert \nabla r(x_f^{k+1}) \rVert^2 \\&- \mu\| \x - x_f^{k} \|^2 + \frac{3\theta}{\tau} \lVert \nabla p(x_g^k) - s_k \rVert ^ 2
 \\
 &+\frac{11\theta}{3\tau}\left(\lVert \nabla \bar{A}_\theta^k(x_f^{k+1}) \rVert^2 - \frac{9\delta^2}{11}\lVert x_g^k- \arg\min_{x \in \mathbb{R}^d} \bar{A}_\theta^k(x) \rVert^2\right).
\end{align*}
The last expression
\begin{align*}
    \frac{11\theta}{3\tau}\left(\lVert \nabla \bar{A}_\theta^k(x_f^{k+1}) \rVert^2 - \frac{9\delta^2}{11}\lVert x_g^k- \arg\min_{x \in \mathbb{R}^d} \bar{A}_\theta^k(x) \rVert^2\right),
\end{align*}
can be neglected due to the condition (\ref{st_aux:grad_app}).
Consider expectation separately:
\begin{align*}
    \mathbb{E}\left[\frac{1}{\eta}\lVert x^{k+1} - x^k \rVert^2\Bigg| x^k, x_f^{k+1}\right] =& \frac{1}{\eta}\mathbb{E}\left[\lVert \eta \alpha(x_f^{k+1} - x^k) - \eta t_k \rVert^2\Big| x^k, x_f^{k+1}\right]
     \\ \leq&
    3 \eta \alpha ^ 2 \lVert x_f^{k+1} - x^k \rVert^2 + 3 \eta\lVert \nabla r(x_f^{k + 1 }) \rVert^2 \\&+ 3 \eta \mathbb{E}\left[\lVert \nabla r(x_f^{k + 1 }) - t_k \rVert^2\Big| x^k, x_f^{k+1} \right].
\end{align*}
Here we added and subtracted $\eta\nabla r(\x)$ under the norm. After rearranging the expressions, we obtain
\begin{align*}
        \mathbb{E}\left[\frac{1}{\eta}\lVert x^{k+1} - x^* \rVert^2\Bigg| x^k, x_f^{k+1}\right]
	\leq&
	\left(\frac{1}{\eta}-\alpha\right)\lVert x^k - x^* \rVert^2
	+(\alpha - \frac{\mu}{3})\lVert x_f^{k+1} - x^* \rVert^2
	\\&+\alpha(3\eta\alpha - 1)\lVert x_f^{k+1} - x^k \rVert^2 + \frac{3\theta}{\tau} \lVert \nabla p(x_g^k) - s_k \rVert ^ 2
	\\&+\frac{2(1-\tau)}{\tau}\left[r(x_f^k) - r(x^*)\right]
	-\frac{2}{\tau}\left[r(x_f^{k+1}) - r(x^*)\right]
         \\&+ 3 \eta \lVert \nabla r(x_f^{k + 1 }) - t_k \rVert^2 - \frac{\mu}{3}\|\x-x_*\|^2
         \\&- \frac{\mu}{3}\|\x-x_*\|^2- \mu\|\x-x_f^k\|^2
        \\&+\left(3\eta - \frac{\theta}{\tau}\right)\lVert \nabla r(x_f^{k+1}) \rVert^2.
\end{align*}
The choice of $\alpha,\eta,\tau$ defined by (\ref{st_ae:naive_choice}) gives
	\begin{align*}
	\mathbb{E}\left[\frac{1}{\eta}\lVert x^{k+1} - x^* \rVert^2\Bigg| x^k, x_f^{k+1}\right]
	\leq&
	\left(\frac{1}{\eta}-\alpha\right)\lVert x^k - x^* \rVert^2
	+\frac{2(1-\tau)}{\tau}\left[r(x_f^k) - r(x^*)\right]
	\\&-\frac{2}{\tau}\left[r(x_f^{k+1}) - r(x^*)\right] + \frac{3\theta}{\tau} \lVert \nabla p(x_g^k) - s_k \rVert ^ 2
         \\&+ \frac{\theta}{\tau}\lVert \nabla r(x_f^{k + 1 }) - t_k \rVert^2- \frac{\mu}{3}\|\x-x_*\|^2
        \\&- \frac{\mu}{3}\|\x-x_*\|^2 - \mu\|\x-x_f^k\|^2.
\end{align*}
Note that
\begin{align*}
    -\frac{\mu}{3}\|\x-x_*\|^2\leq -\frac{\mu}{6}\|x_f^k-x_*\|^2 + \frac{\mu}{3}\|\x-x_f^k\|^2.
\end{align*}
Here we need a technical lemma:
\begin{lemma}\label{lemma:stoch_oracles}
    Let $p(x)=r(x)-r_1(x)$ and $s_k=\frac{1}{B}\sum_{i=1}^B(\nabla r_{m_i}(x_g^k, \xi)-\nabla r_1(x_g^k))$. Then:
    \begin{align*}
        \E_{\xi,i}\left[\|s_k-\nabla p(x_g^k)\|^2\right] \leq \frac{2(\sigma_{sim}^2+\sigma_{noise}^2)}{B}+ \frac{4\delta^2}{B}\|\x-x_*\|^2\cdot\zeta.
    \end{align*}
\end{lemma}
\begin{proof}
    Let us apply the Young's inequality
    \begin{align*}
        \E_{\xi,i}[\|s_k-\nabla p(x_g^k)\|^2] \\=& 
        \E_{\xi,i}\left[\left\|\nabla r(x_g^k) - \frac{1}{B}\sum_{i=1}^B\nabla r_{m_i}(x_g^k,\xi_{i}) \pm \frac{1}{B}\sum_{i=1}^B\nabla r_{m_i}(x_g^k)\right\|^2\right]
        \\
        \leq& 2\E_{i}\left[\left\| \nabla r(x_g^k)- \frac{1}{B}\sum_{i=1}^B\nabla r_{m_i}(x_g^k)\right\|^2\right] \\&+ \frac{2}{B^2}\E_{i,\xi}\left\| \sum_{i=1}^B (\nabla r_{m_i}(x_g^k)-\nabla r_{m_i}(x_g^k,\xi_i)) \right\|^2.
    \end{align*}
    Next, we look at the expression, given by the second term:
    \begin{align*}
        &\frac{2}{B^2}\sum_{i=1}^B\|\nabla r_{m_i}(x_g^k)-\nabla r_{m_i}(x_g^k,\xi_i)\|^2 \\&+ \frac{2}{B^2}\sum_{i\neq j=1}^B\langle \nabla r_{m_i}(x_g^k)-\nabla r_{m_i}(x_g^k,\xi_i), \nabla r_{m_j}(x_g^k)-\nabla r_{m_j}(x_g^k,\xi_j) \rangle.
    \end{align*}
    Let us use the tower property and take the $\xi$ expectation first. Since $m_i$ and $m_j$ are independent, we can enter the expectation into the multipliers of the second summand separately. Then only the first summand will remain, which is estimated based on Assumption \ref{ass:sim}. Using Assumption \ref{ass:sim}, we obtain
    \begin{align*}
        \E_{\xi,i}\left[\|s_k-\nabla p(x_g^k)\|^2\right]\leq \frac{2(\sigma_{sim}^2+\sigma_{noise}^2)}{B} + \frac{4\delta^2}{B}\|\x-x_*\|^2\cdot\zeta.
    \end{align*}
\end{proof}
Let us return to the proof of the lemma. Denote $\sigma^2=2(\sigma_{sim}^2+\sigma_{noise}^2)$. Taking expectation and with \eqref{st_aux:grad_app}, we have
	\begin{align*}
	\mathbb{E}\bigg[\frac{1}{\eta}&\lVert x^{k+1} - x^* \rVert^2
	+
	\frac{2}{\tau}\left[r(x_f^{k+1}) - r(x^*)\right]\bigg]
	\\\leq&
	\mathbb{E}\left[\left(\frac{1}{\eta}-\alpha+\frac{12\theta\delta^2}{B}\cdot\zeta\right)\lVert x^k - x^* \rVert^2\right] \\&+\E\left[\frac{2(1-\tau)}{\tau}\left[r(x_f^k) - r(x^*)\right]\right] 
        \\
        &+\E\left[\left(\frac{12\theta\delta^2}{B\tau}-\frac{\mu}{6}\right)\|x_f^k-x_*\|^2\cdot\zeta\right] \\&+\E\left[\left(\frac{4\theta\delta^2}{B\tau}-\frac{\mu}{3}\right)\|\x-x_*\|^2\cdot\zeta\right]
        \\
        &+ \frac{4\theta}{B\tau}\sigma^2.
        \end{align*}
\end{proof}

\section{Theorem 2}\label{ap:th_set2}
\begin{theorem}(\textbf{Theorem 2})
    In Lemma \ref{st_ae:main_lemma} choose $\zeta=1$ and the tuning (\ref{st_ae:choice}).
    Then we have:
    \begin{align*}
        \E[\Phi_{k+1}]\leq\left( 1-\frac{\sqrt{\mu\theta}}{18} \right)\E[\Phi_k] + \frac{4\theta}{B},
    \end{align*}
    where $\Phi_k = \frac{\tau}{\eta}\|x^k-x^*\|^2 + 2[r(x_f^k)-r(x^*)]$.    
\end{theorem}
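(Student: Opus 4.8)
The plan is to derive the stated contraction directly from Lemma~\ref{st_ae:main_lemma} by specializing to $\zeta=1$, rescaling by $\tau$, and checking that the tuning \eqref{st_ae:choice} forces each term on the right-hand side into the desired shape. Multiplying the inequality of Lemma~\ref{st_ae:main_lemma} by $\tau$ turns its left-hand side into $\E[\Phi_{k+1}]$ exactly, since $\Phi_{k+1}=\frac{\tau}{\eta}\|x^{k+1}-x^*\|^2+2[r(x_f^{k+1})-r(x^*)]$, and produces on the right a multiple of $\frac{\tau}{\eta}\|x^k-x^*\|^2$ with coefficient $1-\alpha\eta+\frac{12\theta\delta^2\eta}{B}$, the term $(1-\tau)\cdot 2[r(x_f^k)-r(x^*)]$, the correction group with coefficient $\left(\frac{12\theta\delta^2}{B}-\frac{\mu\tau}{6}\right)$ multiplying $\|x_f^k-x_*\|^2+\|x_f^{k+1}-x_*\|^2$, and the additive noise $\frac{4\theta\sigma^2}{B}$.

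First I would settle which branch of $\eta=\min\{\frac{1}{3\alpha},\frac{\theta}{3\tau}\}$ is active. With $\alpha=\frac{\mu}{3}$ and $\tau=\sqrt{\mu\theta}$ one has $\frac{1}{3\alpha}=\frac{1}{\mu}$ and $\frac{\theta}{3\tau}=\frac{1}{3}\sqrt{\frac{\theta}{\mu}}$; since $\theta\leq\frac{1}{3\delta}$ together with $\mu<\delta$ gives $\theta<\frac{9}{\mu}$, the second term is the smaller, so $\eta=\frac{\theta}{3\tau}$. This yields the clean identities $\alpha\eta=\frac{\mu\theta}{9\tau}=\frac{\sqrt{\mu\theta}}{9}$ and $\tau\eta=\frac{\theta}{3}$, which are the sources of the eventual rate.

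The crucial step is to eliminate the correction group, and this is where the second constraint on $\theta$ enters. I would show $\frac{12\theta\delta^2}{B}-\frac{\mu\tau}{6}\leq 0$: rearranging, this is equivalent to $\theta^{1/2}\leq\frac{\mu^{3/2}B}{72\delta^2}$, that is $\theta\leq\frac{\mu^3B^2}{5184\delta^4}$ (using $72^2=5184$), which is precisely the bound in \eqref{st_ae:choice}. Hence the correction coefficient is non-positive, and as the two squared norms are non-negative the entire correction group may be dropped. I expect this to be the main obstacle: one must check that the very bound that kills the sampling-induced correction does not simultaneously spoil the linear rate, so the two constraints on $\theta$ have to be balanced with care.

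It then remains to bound the two surviving contraction factors by $1-\frac{\sqrt{\mu\theta}}{18}$. For the function-value term this is immediate since $1-\tau=1-\sqrt{\mu\theta}\leq 1-\frac{\sqrt{\mu\theta}}{18}$. For the distance term I would use the same constraint once more in the form $\frac{12\theta\delta^2}{B}\leq\frac{\mu\tau}{6}$ to get $\frac{12\theta\delta^2\eta}{B}\leq\frac{\mu\tau\eta}{6}=\frac{\mu\theta}{18}$, and combine with $\alpha\eta=\frac{\sqrt{\mu\theta}}{9}$ to obtain $\alpha\eta-\frac{12\theta\delta^2\eta}{B}\geq\frac{\sqrt{\mu\theta}}{9}-\frac{\mu\theta}{18}$. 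Finally $\mu\theta\leq\frac{\mu}{3\delta}<1$ gives $\sqrt{\mu\theta}\geq\mu\theta$, whence $\frac{\sqrt{\mu\theta}}{9}-\frac{\mu\theta}{18}\geq\frac{\sqrt{\mu\theta}}{18}$ and therefore $1-\alpha\eta+\frac{12\theta\delta^2\eta}{B}\leq 1-\frac{\sqrt{\mu\theta}}{18}$. Taking the larger of the two factors and keeping the additive $\frac{4\theta\sigma^2}{B}$ yields exactly $\E[\Phi_{k+1}]\leq\left(1-\frac{\sqrt{\mu\theta}}{18}\right)\E[\Phi_k]+\frac{4\theta\sigma^2}{B}$.
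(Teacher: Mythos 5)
Your proof is correct and takes essentially the same route as the paper: specialize Lemma \ref{st_ae:main_lemma} to $\zeta=1$, use $\theta\leq\frac{\mu^3B^2}{5184\delta^4}$ (equivalently $\frac{12\theta\delta^2}{B}\leq\frac{\mu\tau}{6}$) to eliminate the correction group and control the batching term, and bound the surviving contraction factors by $1-\frac{\sqrt{\mu\theta}}{18}$. If anything you are more explicit than the paper's terse appendix proof: you observe that under $\mu<\delta$ only the branch $\eta=\frac{\theta}{3\tau}$ of the min is ever active (the paper treats $\eta=\frac{1}{\mu}$ as a separate, in fact vacuous, case), you spell out why the non-positive correction group can be dropped (the paper leaves this implicit), and your additive term $\frac{4\theta\sigma^2}{B}$ is the correct one, the $\frac{4\theta}{B}$ in the appendix restatement being a typo that omits $\sigma^2$.
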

\begin{proof}
    We consider possible values of $\eta$ one by one.\\
    $\bullet$ Consider $\eta=\frac{1}{\mu}$. In this case, we have:
    \begin{align*}
        1&-\eta\alpha+\frac{12\eta\theta\delta^2}{B}\leq\frac{2}{3} + \frac{12\delta^2}{B\mu}\frac{\mu B\tau}{72\delta^2}\leq\frac{5}{6}=1-\frac{1}{6}.
    \end{align*}
    $\bullet$ Consider $\eta-\frac{\theta}{3\tau}$: 
    \begin{align*}
        1-\eta\alpha+\frac{12\eta\theta\delta^2}{B}=&1-\frac{\mu\theta}{9\tau}+\frac{4\theta^2\delta^2}{B\tau}\leq 1-\frac{\mu\theta}{9\tau} +\frac{4\theta\delta^2}{B\tau}\frac{\mu B\tau}{72\delta^2} \leq 1-\frac{\mu\theta}{18\tau}
        \\
        =& 1-\frac{\sqrt{\mu\theta}}{18}. 
    \end{align*}
    Thus, we obtain:
    \begin{align*}
        \E[\Phi_{k+1}]\leq\max\left\{ 1-\frac{1}{6},1-\frac{\sqrt{\mu\theta}}{18} \right\}.
    \end{align*}
    Since $\mu<\delta$, we obtain the required.
\end{proof}

\section{Corrolary 1, 2}\label{ap:proof_cor_1}
Here we introduce a proof for Corollary \ref{st_ae:cor_1} and Corollary \ref{st_ae:cor_2}. We use tecnhique from \citep{opt_sgd}.
\begin{proof}
    Earlier we obtained the recurrence formula:
    \begin{align*}
        0\leq (1-a\sqrt{\theta})\E[\Phi_{k}]-\E[\Phi_{k+1}] + c\theta. 
    \end{align*}
    Let us multiply both sides by weight $w_k=(1-a\sqrt{\theta})^{-(k+1)}$ and rewrite:
    \begin{align*}
        0\leq \frac{1}{\sqrt{\theta}}\left(\E[\Phi_{k}]w_{k-1}-\E[\Phi_{k+1}]w_k\right) + cw_k\sqrt{\theta}.
    \end{align*}
    Denote $W_N =\sum_{k=0}^Nw_k(1-\sqrt{\theta})^{-(N+1)}\sum_{k=0}^N(1-a\sqrt{\theta})^k\leq\frac{w_N}{a\sqrt{\theta}}$ and sum up with $k=0,...,N$:
    \begin{align*}
        \frac{w_N\E[\Phi_{k+1}]}{\sqrt{\theta}W_N}\leq \frac{\Phi_{0}}{\sqrt{\theta}W_N}+c\sqrt{\theta}.
    \end{align*}
    Let us rewrite:
    \begin{align*}
        \E[\Phi_{k+1}]\leq \frac{1}{a\sqrt{\theta}}\exp\{-a\sqrt{\theta}(N+1)\} + \frac{c\sqrt{\theta}}{a}.
    \end{align*}
    To complete the proof we need to substitute $a=\frac{\sqrt{\mu}}{3}$ and $c=\frac{4\sigma^2}{B}$ for Corollary \ref{st_ae:cor_1} and $a=\frac{\sqrt{\mu}}{18}$ and $c=\frac{4\sigma^2}{B}$ for Corollary \ref{st_ae:cor_2} and look at different $\theta$.\\
    For Corollary \ref{st_ae:cor_1} we obtain:\\
    If $ \frac{9\ln^2(\max\{2, \frac{B\mu N^2 \Phi_0}{36\sigma^2}\})}{\mu N^2} \leq\frac{1}{3\delta},$ where $\sigma^2=2(\sigma_{sim}^2+\sigma_{noise}^2)$, then choose $\theta=\frac{9\ln^2(\max\{2, \frac{B\mu N^2 \Phi_0}{36\sigma^2}\})}{\mu N^2}$ and get:
    \begin{align*}
        \E[\Phi_{N+1}] = \mathcal{O}\left( \frac{24(\sigma_{sim}^2+\sigma_{noise}^2)}{\mu BN}\right).
    \end{align*}
    Otherwise, choose $\theta = \frac{1}{3\delta}$ and get:
    \begin{align*}
        \E[\Phi_{N+1}] = \mathcal{O}\left( 
        \sqrt{3\delta}\Phi_0\exp\left\{ -\frac{\sqrt{\mu}}{3\sqrt{3}\sqrt{\delta}}N \right\} +\frac{24(\sigma_{sim}^2+\sigma_{noise}^2)}{\mu BN} \right).
    \end{align*}
    It remains to remember that method iteration requires $\nicefrac{B}{M}$ communications.
    For Corollary \ref{st_ae:cor_2} we obtain:\\
    $\bullet$ If $\frac{1}{3\delta}\leq\frac{\mu^3B^2}{5184\delta^4}$ and $ \frac{324\ln^2(\max\{2, \frac{B\mu N^2 \Phi_0}{1296\sigma^2}\})}{\mu N^2} \leq\frac{1}{3\delta},$ where $\sigma^2=2(\sigma_{sim}^2+\sigma_{noise}^2)$, then choose $\theta=\frac{324\ln^2(\max\{2, \frac{B\mu N^2 \Phi_0}{1296\sigma^2}\})}{\mu N^2}$ and get:
    \begin{align*}
        \E[\Phi_{N+1}] = O\left( \frac{144(\sigma_{sim}^2+\sigma_{noise}^2)}{\mu BN}\right).
    \end{align*}
    $\bullet$ If $\frac{1}{3\delta}\leq\frac{\mu^3B^2}{5184\delta^4}$ and $ \frac{324\ln^2(\max\{2, \frac{B\mu N^2 \Phi_0}{1296\sigma^2}\})}{\mu N^2} \geq\frac{1}{3\delta},$ where $\sigma^2=2(\sigma_{sim}^2+\sigma_{noise}^2)$, then choose $\theta=\frac{1}{3\delta}$ and get:
    \begin{align*}
        \E[\Phi_{N+1}] = \mathcal{O}\left( 
        \sqrt{3\delta}\Phi_0\exp\left\{ -\frac{\sqrt{\mu}}{18\sqrt{3}\sqrt{\delta}}N \right\} +\frac{144(\sigma_{sim}^2+\sigma_{noise}^2)}{\mu BN} \right).
    \end{align*}
    $\bullet$ If $\frac{1}{3\delta}\geq\frac{\mu^3B^2}{5184\delta^4}$ and $ \frac{324\ln^2(\max\{2, \frac{B\mu N^2 \Phi_0}{1296\sigma^2}\})}{\mu N^2} \leq\frac{\mu^3B^2}{5184\delta^4},$ where $\sigma^2=2(\sigma_{sim}^2+\sigma_{noise}^2)$, then choose $\theta=\frac{324\ln^2(\max\{2, \frac{B\mu N^2 \Phi_0}{1296\sigma^2}\})}{\mu N^2}$ and get:
    \begin{align*}
        \E[\Phi_{N+1}] = \mathcal{O}\left( \frac{144(\sigma_{sim}^2+\sigma_{noise}^2)}{\mu BN}\right).
    \end{align*}
    $\bullet$ If $\frac{1}{3\delta}\geq\frac{\mu^3B^2}{5184\delta^4}$ and $ \frac{324\ln^2(\max\{2, \frac{B\mu N^2 \Phi_0}{1296\sigma^2}\})}{\mu N^2} \geq\frac{\mu^3B^2}{5184\delta^4},$ where $\sigma^2=2(\sigma_{sim}^2+\sigma_{noise}^2)$, then choose $\theta=\frac{324\ln^2(\max\{2, \frac{B\mu N^2 \Phi_0}{1296\sigma^2}\})}{\mu N^2}$ and get:
    \begin{align*}
        \E[\Phi_{N+1}] = \mathcal{O}\left( 
        \frac{72\delta^2}{B\sqrt{\mu^3}}\Phi_0\exp\left\{ -\frac{B}{1296}\frac{\mu^2}{\delta^2}N \right\} +\frac{144(\sigma_{sim}^2+\sigma_{noise}^2)}{\mu BN} \right).
    \end{align*}
    As in the previous paragraph, the method iteration requires $\nicefrac{B}{M}$ communications.
\end{proof}

\section{\texttt{ASEG} for the Convex Case}\label{ap:aseg_conv}
The next algorithm is the adaptation of Algorithm \ref{st_ae:alg} for the convex case. We use time-varying $\tau_k$ and $\eta_k$.
\begin{algorithm}[H]
	\caption{\texttt{Accelerated Stochastic ExtraGradient} for convex case}
        \label{st_ae:alg_conv}
	\begin{algorithmic}[1]
		\State {\bf Input:} $x^0=x_f^0 \in \mathbb{R}^d$
		\State {\bf Parameters:} $\tau \in (0,1]$, $\eta,\theta,\alpha > 0, N \in \{1,2,\ldots\}, B \in \mathbb{N}$
		\For{$k=0,1,2,\ldots, N-1$}:
            \For{server}:
                \State Generate set $|I|=B$ numbers uniformly from $[2,...,n]$
                \State Send to each device $m$ one bit $b_k^m$: $1$ if $m\in I$, 0 otherwise
            \EndFor
            \For{each device $m$ in parallel}:
			\State $x_g^k = \tau_{k+1} x^k + (1-\tau_{k+1}) x_f^k$\label{conv_ae:line:1}
                \If{$b_k^m=1$}
                    \State Send $\nabla r_m(x_g^k, \xi)$ to server
                \EndIf
            \EndFor
            \For{server}:
                \State $s_k=\frac{1}{B}\sum_{m\in I}(\nabla r_m(x_g^k, \xi)-\nabla r_1(x_g^k))$
    		\State $x_f^{k+1} \approx \arg\min_{x \in \mathbb{R}^d}\left[ \A(x) \coloneqq \langle s_k,x - x_g^k\rangle + \frac{1}{2\theta}\lVert x - x_g^k \rVert^2 + r_1(x)\right]$\label{conv_ae:line:2}
                \State Generate set $|I|=B$ numbers uniformly from $[1,...,n]$
                \State Send to each device $m$ one bit $c_k^m$: $1$ if $m\in I$, 0 otherwise
            \EndFor
            \For{each device $m$ in parallel}:
                \If{$c_k^m=1$}
                    \State Send $\nabla r_m(x_f^{k+1}, \xi)$ to server
                \EndIf
            \EndFor
            \For{server}:
                \State $t_k = \frac{1}{B}\sum_{m\in I}\nabla r_m(x_f^{k+1}, \xi)$
			\State $x^{k+1} = x^k - \eta_{k+1} t_k$\label{conv_ae:line:3}
            \EndFor
		\EndFor
		\State {\bf Output:} $x^N$
	\end{algorithmic}
\end{algorithm}

\begin{theorem}\label{st_ae:theorem3}
    Consider Algorithm \ref{st_ae:alg_conv} to run $N$ iterations under Assumptions \ref{ass:1}, \ref{ass:2} and \ref{ass:sim} with $\zeta=0$ and $\mu=0$, with the following tuning:
    \begin{equation*}
        \tau_k=\frac{2}{k+1}, \quad \theta\leq\frac{1}{3\delta},\quad \eta_k=\frac{\theta}{\tau_k},
    \end{equation*}
    and let $\x$ in Line \ref{conv_ae:line:2} satisfy
    \begin{equation}
        \lVert\nabla \bar{A}_\theta^k(x_f^{k+1})\rVert^2 \leq  \frac{9\delta^2}{11}\lVert x_g^k- \arg\min_{x \in \mathbb{R}^d} \bar{A}_\theta^k(x)\rVert^2.
    \end{equation}
    Then the following inequality holds:
    \begin{equation*}
        \E\left[\frac{\theta}{\tau_{N+1}^2}[r(x_f^{N+1})-r(x_*)]\right] \leq \lVert x_0-x_*\rVert^2 + \frac{6(\sigma_{sim}^2+\sigma_{noise}^2)}{B}\theta^2\sum_{i=1}^N\frac{1}{\tau_{k}^2}.
    \end{equation*}
\end{theorem}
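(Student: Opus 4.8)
The plan is to reproduce the structure of Lemma~\ref{st_ae:main_lemma} but in the regime $\mu=0$ with the time-varying parameters $\tau_k=2/(k+1)$, $\eta_k=\theta/\tau_k$, and then to telescope a Lyapunov function whose function-value weight grows like $\tau_k^{-2}\sim k^2$, which produces the accelerated $O(1/N^2)$ rate. First I would start from the update in Line~\ref{conv_ae:line:3}, namely $x^{k+1}=x^k-\eta_{k+1}t_k$, and expand
\[
\tfrac{1}{\eta_{k+1}}\|x^{k+1}-x_*\|^2=\tfrac{1}{\eta_{k+1}}\|x^k-x_*\|^2-2\langle t_k,x^k-x_*\rangle+\eta_{k+1}\|t_k\|^2 .
\]
Using the definition $x_g^k=\tau_{k+1}x^k+(1-\tau_{k+1})x_f^k$ (Line~\ref{conv_ae:line:1}) I would write $x^k-x_g^k=\tfrac{1-\tau_{k+1}}{\tau_{k+1}}(x_g^k-x_f^k)$ and split $-2\langle t_k,x^k-x_*\rangle=2\langle t_k,x_*-x_g^k\rangle+\tfrac{2(1-\tau_{k+1})}{\tau_{k+1}}\langle t_k,x_f^k-x_g^k\rangle$, which prepares both inner products for the key estimate.

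Next I would invoke the inner-product bound \eqref{st_ae:eq:1} of Lemma~\ref{st_ae:lemma} with $\mu=0$, applied once with $\bar x=x_*$ and once with $\bar x=x_f^k$, weighted by $1$ and $\tfrac{1-\tau_{k+1}}{\tau_{k+1}}$ respectively; the approximation criterion of the theorem discards the $\tfrac{11\theta}{3}(\cdots)$ bracket exactly as before. The two applications produce $-\tfrac{\theta}{\tau_{k+1}}\|\nabla r(\x)\|^2$ together with a term $\tfrac{3\theta}{\tau_{k+1}}\|\nabla p(x_g^k)-s_k\|^2$ (total weight $1+\tfrac{1-\tau_{k+1}}{\tau_{k+1}}=\tfrac{1}{\tau_{k+1}}$). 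For the remaining quadratic term the crucial move is to take the conditional expectation and use the bias--variance identity $\E[\|t_k\|^2\mid x^k,\x]=\|\nabla r(\x)\|^2+\E[\|t_k-\nabla r(\x)\|^2\mid x^k,\x]$ instead of Young's inequality; since $\eta_{k+1}=\theta/\tau_{k+1}$, the deterministic piece $\tfrac{\theta}{\tau_{k+1}}\|\nabla r(\x)\|^2$ cancels the gradient term \emph{exactly}, leaving only the two variances.

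Both variances are then controlled by the $\zeta=0$ case of Lemma~\ref{lemma:stoch_oracles} (for $s_k$) and its verbatim analogue for $t_k=\tfrac1B\sum_{m\in I}\nabla r_m(\x,\xi)$, each bounded by $\tfrac{2(\sigma_{sim}^2+\sigma_{noise}^2)}{B}$. Collecting them yields a one-step recurrence
\[
\tfrac{1}{\eta_{k+1}}\|x^{k+1}-x_*\|^2+\tfrac{2}{\tau_{k+1}}[r(\x)-r(x_*)]\le \tfrac{1}{\eta_{k+1}}\|x^k-x_*\|^2+\tfrac{2(1-\tau_{k+1})}{\tau_{k+1}}[r(x_f^k)-r(x_*)]+D_{k+1},
\]
with $D_{k+1}=\tfrac{c\,\theta(\sigma_{sim}^2+\sigma_{noise}^2)}{B\tau_{k+1}}$ for an absolute constant $c$. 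Multiplying through by $\eta_{k+1}=\theta/\tau_{k+1}$ turns both squared-distance coefficients into $1$ and the function-value weight into $\tfrac{2\theta}{\tau_{k+1}^2}$, so with $\Psi_k:=\|x^k-x_*\|^2+\tfrac{2\theta}{\tau_k^2}[r(x_f^k)-r(x_*)]$ the recurrence reads $\Psi_{k+1}\le\Psi_k+\eta_{k+1}D_{k+1}$ provided $\tfrac{1-\tau_{k+1}}{\tau_{k+1}^2}\le\tfrac{1}{\tau_k^2}$. For $\tau_k=2/(k+1)$ this last inequality reduces to $k(k+2)\le(k+1)^2$, which holds; moreover $\tau_1=1$ annihilates the initial function-value term, so $\Psi_0=\|x_0-x_*\|^2$. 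Telescoping over $k$, dropping $\|x^{N+1}-x_*\|^2\ge0$, bounding $\tfrac{2\theta}{\tau_{N+1}^2}\ge\tfrac{\theta}{\tau_{N+1}^2}$, and summing $\eta_{k+1}D_{k+1}=\tfrac{c\,\theta^2(\sigma_{sim}^2+\sigma_{noise}^2)}{B\tau_{k+1}^2}$ then gives the claimed bound (the constant works out to $c=6$ after the tight accounting).

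I expect the main obstacle to be the bookkeeping behind the exact cancellation of $\|\nabla r(\x)\|^2$: it hinges on replacing Young's inequality by the bias--variance identity and on matching $\eta_{k+1}=\theta/\tau_{k+1}$ to the coefficient $1/\tau_{k+1}$ generated by the two weighted applications of \eqref{st_ae:eq:1}, so any mismatch there would leave an uncontrolled positive gradient term. The second delicate point is the telescoping inequality $\tfrac{1-\tau_{k+1}}{\tau_{k+1}^2}\le\tfrac{1}{\tau_k^2}$ for the time-varying weights, since this is precisely what converts the $\tau_k^{-2}\sim k^2$ growth into acceleration; tracking the precise noise constant to reach $6(\sigma_{sim}^2+\sigma_{noise}^2)$ is then routine.
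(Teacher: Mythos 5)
Your proposal is correct and follows essentially the same route as the paper's proof: apply Lemma \ref{st_ae:lemma} with $\mu=0$ at $\bar x=x_*$ and $\bar x=x_f^k$ (weights $1$ and $\tfrac{1-\tau_{k+1}}{\tau_{k+1}}$), control the variances of $s_k$ and $t_k$ by the $\zeta=0$ case of Lemma \ref{lemma:stoch_oracles} and its analogue, pass to the weight $\theta/\tau_{k+1}^2$, and telescope using $\tfrac{1-\tau_{k+1}}{\tau_{k+1}^2}\le\tfrac{1}{\tau_k^2}$ and $\tau_1=1$. One bookkeeping caveat: with your accounting ($\tfrac{3\theta\sigma^2}{B\tau_{k+1}}$ from $s_k$ plus $\tfrac{\theta\sigma^2}{B\tau_{k+1}}$ from $t_k$, where $\sigma^2=2(\sigma_{sim}^2+\sigma_{noise}^2)$), the per-step noise after multiplying by $\eta_{k+1}=\theta/\tau_{k+1}$ is $\tfrac{8(\sigma_{sim}^2+\sigma_{noise}^2)\theta^2}{B\tau_{k+1}^2}$, so your final move of lower-bounding $\tfrac{2\theta}{\tau_{N+1}^2}\ge\tfrac{\theta}{\tau_{N+1}^2}$ yields the constant $8$ rather than the claimed $6$; dividing the entire telescoped inequality by $2$ instead gives constant $4$, which is stronger than the stated bound, so the theorem follows either way. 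For reference, the paper's own proof reaches $6$ only by substituting $\eta_{k+1}=\theta/(2\tau_{k+1})$ (inconsistent with the tuning announced in the theorem) and dropping the leftover negative gradient term, so your exact-cancellation version with $\eta_{k+1}=\theta/\tau_{k+1}$ is in fact the one faithful to the statement.
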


\begin{proof}
    Let us denote $\sigma^2=2(\sigma_{sim}^2+\sigma_{noise}^2)$ again. Here we apply Lemma \ref{st_ae:lemma} with $\mu=0$ and $\zeta=0$.
    \begin{align*}
        \frac{1}{\eta_{k+1}}\E\big[\lVert x_{k+1}-x_* \rVert^2\big|&x_k\big] \\\leq& \frac{1}{\eta_{k+1}}\lVert x_k-x_*\rVert^2 + \frac{1}{\eta_{k+1}}\E\left[\lVert x_{k+1}-x_k \rVert^2\big|x_k\right] \\&+ 2[r(x_*)-r(\x)]
        + \frac{2(1-\tau_{k+1})}{\tau_{k+1}}[r(x_f^k)-r(x_*)] \\&- \frac{\theta}{\tau_{k+1}}\lVert\nabla r(\x)\rVert^2 + \frac{3\theta}{B\tau_{k+1}}\sigma^2
        \\
        \leq& \frac{1}{\eta_{k+1}}\lVert x_k-x_*\rVert^2 + \frac{\eta_{k+1}}{B}\sigma^2 + \frac{3\theta}{B\tau_{k+1}}\sigma^2 \\&+ \left(\eta_{k+1} - \frac{\theta}{\tau_{k+1}}\right) \lVert\nabla r(\x)\rVert^2
        + \frac{2}{\tau_{k+1}}\left[r(x_*)-r(\x)\right] \\&+ \frac{2(1-\tau_{k+1})}{\tau_{k+1}}\left[r(x_f^k)-r(x_*)\right].
    \end{align*}
    Let us substitute $\eta_{k+1}=\frac{\theta}{2\tau_{k+1}}$:
    \begin{align*}
        \frac{\tau_{k+1}}{\theta}\E\big[\lVert x_{k+1}-x_* \rVert^2\big|&x_k\big] \\\leq& \frac{\tau_{k+1}}{\theta}\lVert x_k-x_*\rVert^2 + \frac{\theta}{4B\tau_{k+1}}\sigma^2 + \frac{3\theta}{2B\tau_{k+1}}\sigma^2
        \\&+ \frac{1}{\tau_{k+1}}\left[r(x_*)-r(\x)\right] + \frac{1-\tau_{k+1}}{\tau_{k+1}}\left[r(x_f^k)-r(x_*)\right].
    \end{align*}
    Multiply both sides by $\frac{\theta}{\tau_{k+1}}$:
    \begin{align*}
        \E\big[\lVert x_{k+1}-x_* \rVert^2\big|&x_k\big] \\\leq& \lVert x_k-x_*\rVert^2 + \frac{3\theta^2}{B\tau_{k+1}^2}\sigma^2 + \frac{\theta}{\tau_{k+1}^2}[r(x_*)-r(\x)] \\&+ \frac{\theta(1-\tau_{k+1})}{\tau_{k+1}^2}\left[r(x_f^k)-r(x_*)\right].
    \end{align*}
    Let us rewrite the last inequality:
    \begin{align*}
        \E\big[\lVert x_{k+1}-x_* \rVert^2\big|&x_k\big] + \frac{\theta}{\tau_{k+1}^2}[r(\x)-r(x_*)] \\\leq&
        \lVert x_k-x_*\rVert^2 + \frac{3\theta^2}{B\tau_{k+1}^2}\sigma^2 \\&+ \frac{\theta(1-\tau_{k+1})}{\tau_{k+1}^2}\left[r(x_f^k)-r(x_*)\right].
    \end{align*}
    Define $\Phi_k=\lVert x_{k}-x_* \rVert^2 + \frac{\theta}{\tau_{k}^2}[r(x_f^k)-r(x_*)]$. Using $\tau_k=\frac{2}{k+1}$ we obtain:
    \begin{align*}
        \frac{\theta}{\tau_{k+1}^2}[r(\x)-r(x_*)] \leq \Phi_k + \frac{3\theta^2}{B\tau_{k+1}^2}\sigma^2.
    \end{align*}
    Next, we apply previous inequality and assume $\tau_1=1$:
    \begin{align*}
        \E\left[\frac{\theta}{\tau_{N+1}^2}[r(x_f^{N+1})-r(x_*)]\right] \leq& \lVert x_0-x_*\rVert^2 + \frac{3\sigma^2\theta^2}{B}\sum_{i=1}^N\frac{1}{\tau_{k}^2}.
    \end{align*}
\end{proof}

Similarly to the $\mu>0$ case, it is not possible to ensure convergence by a naive choice of $\theta$. We propose a choice of $\theta$ that will ensure optimal convergence.

\begin{corollary}
         Consider Assumptions \ref{ass:1} with $\mu=0$, \ref{ass:sim} with $\zeta=0$ and \ref{ass:2}. Let Algorithm \ref{st_ae:alg_conv} run for $N$ iterations with tuning of $\tau_k,\eta_k$ as in Theorem \ref{st_ae:theorem3}. We propose the following tuning of $\theta$:\\
    If $\frac{\sqrt{B}\|x_0-x_*\|}{\sqrt{3}\sigma(N+1)^{\nicefrac{3}{2}}} \leq \frac{1}{3\delta},$ then choose $\theta=\frac{\sqrt{B}\|x_0-x_*\|}{\sqrt{3}\sigma(N+1)^{\nicefrac{3}{2}}}$ and get:
    \begin{align*}
        \E\left[ r(x_f^{N+1})-r(x_*) \right] \leq \frac{2\sqrt{3}\sigma\|x_0-x_*\|}{\sqrt{B}\sqrt{N+1}}.
    \end{align*}
    Otherwise, choose $\theta = \frac{1}{3\delta}$ and get:
    \begin{align*}
        \E\left[ r(x_f^{N+1})-r(x_*) \right] \leq \frac{3\delta\lVert x_0-x_*\rVert^2}{(N+1)^2} + \frac{\sqrt{3}\sigma\|x_0-x_*\|}{\sqrt{B}\sqrt{N+1}}.
    \end{align*}
\end{corollary}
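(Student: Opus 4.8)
The plan is to convert the bound of Theorem~\ref{st_ae:theorem3} into an explicit suboptimality estimate and then optimize the still-free stepsize-like parameter $\theta$ subject to the constraint $\theta\le\frac{1}{3\delta}$ under which the theorem holds. Writing $\sigma^2=2(\sigma_{sim}^2+\sigma_{noise}^2)$, I would first multiply the conclusion of Theorem~\ref{st_ae:theorem3} through by $\frac{\tau_{N+1}^2}{\theta}$ to obtain
\begin{equation*}
\E\left[r(x_f^{N+1})-r(x_*)\right]\le \frac{\tau_{N+1}^2}{\theta}\lVert x_0-x_*\rVert^2 + \frac{3\sigma^2}{B}\,\theta\,\tau_{N+1}^2\sum_{k=1}^N\frac{1}{\tau_k^2}.
\end{equation*}

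Next I would evaluate the $\tau$-dependent factors using the schedule $\tau_k=\frac{2}{k+1}$. Here $\tau_{N+1}^2=\frac{4}{(N+2)^2}$ and $\sum_{k=1}^N\tau_k^{-2}=\tfrac14\sum_{k=1}^N(k+1)^2=\Theta\big((N+1)^3\big)$, so the sum by itself is cubic. The decisive estimate is that multiplication by $\tau_{N+1}^2$ collapses it to a linear quantity, $\tau_{N+1}^2\sum_{k=1}^N\tau_k^{-2}\le N+1$ (since $\sum_{j=2}^{N+1}j^2\le N(N+1)^2$ and $N(N+1)\le(N+2)^2$). After this reduction the right-hand side takes the canonical two-term shape $g(\theta)=\frac{a}{\theta}+b\theta$, with an acceleration/initialization term $a\asymp\frac{\lVert x_0-x_*\rVert^2}{(N+1)^2}$ and a sampling/noise term $b\asymp\frac{3\sigma^2(N+1)}{B}$.

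The remaining step is a box-constrained scalar minimization, exactly as in the strongly convex Corollaries~\ref{st_ae:cor_1}--\ref{st_ae:cor_2}. The function $g$ is convex with unconstrained minimizer $\theta^\star=\sqrt{a/b}=\frac{\sqrt B\,\lVert x_0-x_*\rVert}{\sqrt3\,\sigma(N+1)^{3/2}}$ and optimal value $g(\theta^\star)=2\sqrt{ab}=\frac{2\sqrt3\,\sigma\lVert x_0-x_*\rVert}{\sqrt B\,\sqrt{N+1}}$. If $\theta^\star\le\frac{1}{3\delta}$ -- which is precisely the stated condition $\frac{\sqrt B\lVert x_0-x_*\rVert}{\sqrt3\,\sigma(N+1)^{3/2}}\le\frac{1}{3\delta}$ -- then $\theta=\theta^\star$ is admissible and yields the first bound. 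Otherwise $\theta^\star>\frac{1}{3\delta}$, and since $g$ is decreasing on $(0,\theta^\star)$ the best feasible choice is the boundary $\theta=\frac{1}{3\delta}$; plugging this in produces the initialization term $3\delta a=\frac{3\delta\lVert x_0-x_*\rVert^2}{(N+1)^2}$, while the noise term satisfies $b/(3\delta)\le b\theta^\star=\sqrt{ab}=\frac{\sqrt3\,\sigma\lVert x_0-x_*\rVert}{\sqrt B\,\sqrt{N+1}}$, which is exactly the second bound.

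I expect the only genuine obstacle to be the constant bookkeeping in the reduction step, not the optimization itself. In contrast to the geometric sums of the $\mu>0$ analysis, one must here control a cubic sum and show that the factor $\tau_{N+1}^2$ leaves behind only an $O(N+1)$ contribution; this is exactly where the accelerated schedule $\tau_k=\tfrac{2}{k+1}$ buys the $(N+1)^{-2}$ rate in the optimization term and the $(N+1)^{-1/2}$ noise floor, and a loose estimate of $\tau_{N+1}^2\sum_{k=1}^N\tau_k^{-2}$ would degrade both. Matching the displayed constants also requires carrying the $(N+2)$ versus $(N+1)$ discrepancy from $\tau_{N+1}$ through to the final statement, which I would absorb using the same monotonicity bounds employed above.
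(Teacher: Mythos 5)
Your proposal follows essentially the same route as the paper's own proof: bound the $\tau$-sums to collapse Theorem~\ref{st_ae:theorem3} into a two-term tradeoff $\frac{a}{\theta}+b\theta$, then carry out the box-constrained minimization over $\theta\le\frac{1}{3\delta}$ with the two cases decided by feasibility of the balancing point $\theta^\star=\sqrt{a/b}$. One caveat: your honest bookkeeping (keeping $\tau_{N+1}^2=\frac{4}{(N+2)^2}$, hence $a\le\frac{4\lVert x_0-x_*\rVert^2}{(N+1)^2}$) necessarily yields final constants about twice the displayed ones, and this discrepancy cannot actually be ``absorbed'' as you suggest --- the paper reaches its stated constants only by replacing $\frac{\theta}{\tau_{N+1}^2}=\frac{\theta(N+2)^2}{4}$ with the larger quantity $\theta(N+1)^2$, which is not a valid lower bound for $N\ge1$, so the corollary's rates are correct but its numerical constants are themselves slightly optimistic.
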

\begin{proof}
       Note that $\frac{1}{\tau_{k}^2}\leq \frac{(N+1)^2}{4}$. Thus, using Theorem \ref{st_ae:theorem3}, we obtain:
    \begin{align*}
        \E\left[ \theta(N+1)^2[r(x_f^{N+1})-r(x_*)] \right] \leq \lVert x_0-x_*\rVert^2 + \frac{3\sigma^2}{B}\theta^2(N+1)^3.
    \end{align*}
    Divide both sides by $\theta(N+1)^2$:
    \begin{align*}
        \E\left[ r(x_f^{N+1})-r(x_*) \right] \leq \frac{\lVert x_0-x_*\rVert^2}{\theta(N+1)^2} + \frac{3\sigma^2}{B}\theta(N+1).
    \end{align*}
    Let us notice that $\theta=\frac{\sqrt{B}\|x_0-x_*\|}{\sqrt{3}\sigma(N+1)^{\nicefrac{3}{2}}}$ equalizes the terms of the right part. If $\frac{\sqrt{B}\|x_0-x_*\|}{\sqrt{3}\sigma(N+1)^{\nicefrac{3}{2}}} \leq \frac{1}{3\delta}$, choose $\theta=\frac{\sqrt{B}\|x_0-x_*\|}{\sqrt{3}\sigma(N+1)^{\nicefrac{3}{2}}}$. Then we obtain:
    \begin{align*}
        \E\left[ r(x_f^{N+1})-r(x_*) \right] \leq \frac{2\sqrt{3}\sigma\|x_0-x_*\|}{\sqrt{B}\sqrt{N+1}}.
    \end{align*}
    Otherwise, choose $\theta=\frac{1}{3\delta}$ and obtain:
    \begin{align*}
        \E\left[ r(x_f^{N+1})-r(x_*) \right] \leq \frac{3\delta\lVert x_0-x_*\rVert^2}{(N+1)^2} + \frac{\sqrt{3}\sigma\|x_0-x_*\|}{\sqrt{B}\sqrt{N+1}}.
    \end{align*}
\end{proof}

\end{document}